\newtheorem{theorem}{Theorem}[section]
\newtheorem{remark}[theorem]{Remark}
\newtheorem{lemma}[theorem]{Lemma}
\newtheorem{proposition}[theorem]{Proposition}
\newtheorem{definition}[theorem]{Definition}
\newtheorem{corollary}[theorem]{Corollary}
\crefname{theorem}{Theorem}{Theorems}
\crefname{proposition}{Proposition}{Propositions}
\crefname{lemma}{Lemma}{Lemmas}
\theoremstyle{plain} 
\newcommand{\thistheoremname}{}
\newtheorem*{genericthm}{\thistheoremname}
\DeclareSymbolFont{bbold}{U}{bbold}{m}{n}
\DeclareSymbolFontAlphabet{\mathbbold}{bbold}
\newcommand{\Schlafli}{Schl\"{a}fli}
\newcommand{\ra}{\rangle}
\newcommand{\la}{\langle}
\def\moverlay{\mathpalette\mov@rlay}
\def\mov@rlay#1#2{\leavevmode\vtop{%
   \baselineskip\z@skip \lineskiplimit-\maxdimen
   \ialign{\hfil$\m@th#1##$\hfil\cr#2\crcr}}}
\newcommand{\charfusion}[3][\mathord]{
    #1{\ifx#1\mathop\vphantom{#2}\fi
        \mathpalette\mov@rlay{#2\cr#3}
      }
    \ifx#1\mathop\expandafter\displaylimits\fi}
\renewcommand{\dim}{\mathsf{dim}}
\newcommand{\RR}{\mathbb{R}}
\newcommand{\NN}{\mathbb{N}}
\newcommand{\ZZ}{\mathbb{Z}}
\renewcommand{\SS}{\mathbb{S}}
\newcommand{\One}{\mathbbold{1}}
\newcommand{\bq}{\bm{q}}
\newcommand{\one}{\bm{1}}
\newcommand{\sA}{\mathcal{A}}
\newcommand{\sB}{\mathcal{B}}
\newcommand{\sO}{\mathcal{O}}
\newcommand{\sV}{\mathcal{V}}
\newcommand{\Tr}{\mathrm{Tr}}
\renewcommand{\ker}{\mathsf{ker}}
\newcommand{\sgn}{\mathsf{sgn}}
\newcommand{\sep}{\mathrm{sep}}
\newcommand{\eqqu}{\stackrel{\mathrm{?}}{=}}
\renewcommand{\vec}{\mathsf{vec}}
\renewcommand{\sep}{\mathsf{sep}}
\newcommand{\row}{\mathsf{row}}
\newcommand{\SOS}{\mathsf{SOS}}
\newcommand{\conv}{\mathsf{conv}}
\newcommand{\Gram}{\mathsf{Gram}}
\newcommand{\M}{\mathsf{M}}
\newcommand{\srg}{\mathsf{srg}}
\newcommand{\sym}{\mathsf{sym}}
\newcommand{\antisym}{\mathsf{antisym}}
\newcommand{\argmin}{\mathrm{argmin}}
\newcommand{\diag}{\mathsf{diag}}
\newcommand{\rank}{\mathsf{rank}}
\newcommand{\bA}{\bm A}
\newcommand{\bD}{\bm D}
\newcommand{\bM}{\bm M}
\newcommand{\bP}{\bm P}
\newcommand{\bQ}{\bm Q}
\newcommand{\bR}{\bm R}
\newcommand{\bS}{\bm S}
\newcommand{\bU}{\bm U}
\newcommand{\bV}{\bm V}
\newcommand{\bW}{\bm W}
\newcommand{\bX}{\bm X}
\newcommand{\bY}{\bm Y}
\newcommand{\bZ}{\bm Z}
\newcommand{\ba}{\bm a}
\newcommand{\bb}{\bm b}
\newcommand{\bd}{\bm d}
\newcommand{\be}{\bm e}
\newcommand{\bs}{\bm s}
\newcommand{\bt}{\bm t}
\newcommand{\bv}{\bm v}
\newcommand{\bw}{\bm w}
\newcommand{\bx}{\bm x}
\newcommand{\by}{\bm y}
\newcommand{\bz}{\bm z}
\newcommand{\fC}{\mathscr{C}}
\newcommand{\fE}{\mathscr{E}}
\newcommand{\odd}{\mathsf{odd}}
\newcommand{\GramSDP}{\mathsf{GramSDP}}
\newcommand{\ptop}{\mathsf{\Gamma}}
\title{A Gramian Description of the Degree 4 Generalized Elliptope}
\author{%
  Afonso S.\ Bandeira\footnote{Courant Institute of Mathematical Sciences and Center for Data Science, New York University, NY 10012. Afonso S. Bandeira was partially supported by NSF
grants DMS-1712730 and DMS-1719545, and by a grant from the Sloan
Foundation.} \and Dmitriy Kunisky\footnote{Courant Institute of Mathematical Sciences, New York University, NY 10012. Dmitriy Kunisky was partially supported by NSF
grants DMS-1712730 and DMS-1719545.}
}
\date{
  First Draft: December 30, 2018 \\
  Current Draft: March 24, 2019}
\begin{document}

\maketitle

\begin{abstract}
    One of the most widely studied convex relaxations in combinatorial optimization is  the relaxation of the cut polytope $\mathscr{C}^N$ to the elliptope $\mathscr{E}^N$, which corresponds to the degree 2 sum-of-squares (SOS) relaxation of optimizing a quadratic form over the hypercube $\{ \pm 1 \}^N$.
    We study the extension of this classical idea to degree 4 SOS, which gives an intermediate relaxation we call the \emph{degree 4 generalized elliptope} $\mathscr{E}_4^N$.
    Our main result is a necessary and sufficient condition for the Gram matrix of a collection of vectors to belong to $\mathscr{E}_4^N$.
    Consequences include a tight rank inequality between degree 2 and degree 4 pseudomoment matrices, and a guarantee that the only extreme points of $\mathscr{E}^N$ also in $\mathscr{E}_4^N$ are the cut matrices; that is, $\mathscr{E}^N$ and $\mathscr{E}_4^N$ share no ``spurious'' extreme point.

    For Gram matrices of equiangular tight frames, we give a simple criterion for membership in $\mathscr{E}_4^N$.
    This yields new inequalities satisfied in $\mathscr{E}_4^N$ but not $\mathscr{E}^N$ whose structure is related to the Schl\"{a}fli graph and which cannot be obtained as linear combinations of triangle inequalities.
    We also give a new proof of the restriction to degree 4 of a result of Laurent showing that $\mathscr{E}_4^N$ does not satisfy certain cut polytope inequalities capturing parity constraints.
    Though limited to this special case, our proof of the positive semidefiniteness of Laurent's pseudomoment matrix is short and elementary.

    Our techniques also suggest that membership in $\mathscr{E}_4^N$ is closely related to the partial transpose operation on block matrices, which has previously played an important role in the study of quantum entanglement.
    To illustrate, we present a correspondence between certain entangled bipartite quantum states and the matrices of $\mathscr{E}_4^N \setminus \mathscr{C}^N$.
\end{abstract}

\newpage

\tableofcontents

\newpage

\section{Introduction}

The optimization of quadratic forms over the hypercube $\{\pm 1\}^N$,
\begin{equation}
    \M(\bW) \colonequals \max_{\bx \in \{\pm 1\}^N} \bx^\top \bW \bx,
\end{equation}
is a well-studied computational problem arising in several contexts, including the Grothendieck problem \cite{grothendieck:56,khot:11}, graph problems such as finding the maximum cut \cite{goemans:95,poljak:95}, synchronization over the group $\ZZ / 2\ZZ$ \cite{abbe:14,bandeira:15}, spiked matrix estimation problems under priors with the i.i.d.\ Rademacher distribution \cite{alaoui:18,perry:18}, and the determination of ground state energies of hard two-spin models from statistical physics \cite{barahona:82,panchenko:13}.
$\M(\bW)$ may equivalently be viewed as optimizing a linear objective over a convex set called the \emph{cut polytope}:
\begin{align}
  \fC^N &\colonequals \conv(\{\bx\bx^\top: \bx \in \{\pm 1\}^N\}), \\
  \M(\bW) &= \max_{\bX \in \fC^N} \la \bW, \bX \ra.
\end{align}
Though it is convex, this problem is nonetheless difficult to solve exactly (e.g., NP-hard for $\bW$ a graph Laplacian, which computes the maximum cut \cite{karp:72}) due to the intricate discrete geometry of the cut polytope \cite{deza:09:book}.

A popular algorithmic choice for approximating $\M(\bW)$ and estimating its optimizer is to form \emph{relaxations} of $\fC^N$, larger convex sets admitting simpler descriptions.
Often, the relaxed sets may be described concisely in terms of positive semidefiniteness (psd) conditions, which leads to semidefinite programming (SDP) relaxations of $\M(\bW)$.
The most common way to execute this strategy is to optimize over the \emph{elliptope},
\begin{equation}
    \fE^N = \fE_2^N \colonequals \{\bX \in \RR^{N \times N}_{\sym}: \bX \succeq 0, \diag(\bX) = \one\} \supseteq \fC^N.
\end{equation}
For example, the well-known approximation algorithms of Goemans-Williamson~\cite{goemans:95} and Nesterov~\cite{nesterov:98} optimize over $\fE_2^N$ and then perform a \emph{rounding} procedure to recover an approximately optimal $\bx \in \{\pm 1\}^N$ from $\bX \in \fE_2^N$.

As our notation suggests, $\fE_2^N$ is only the first of a sequence of increasingly tighter relaxations of $\fC^N$, corresponding to \emph{sum-of-squares (SOS)} relaxations of $\M(\bW)$.
These sets are indexed by an even integer $d \geq 2$ called the \emph{degree}, and we call the set described at degree $d$ the \emph{degree $d$ generalized elliptope}, denoted $\fE_d^N$.
The generalized elliptopes satisfy the inclusions
\begin{equation}
    \fE_2^N \supseteq \fE_4^N \supseteq \cdots \supseteq \fE_{N + \One\{N \text{ odd}\}}^N = \fC^N.
\end{equation}
(The last equality and its tightness in the sense that no generalized elliptope of lower degree equals $\fC^N$ are non-trivial results proven by \cite{fawzi:16} and \cite{laurent:03} respectively.)
Thus, optimizing over generalized elliptopes of higher degree may yield better approximations of $\M(\bW)$; however, it is also costlier, since the associated semidefinite program is over matrix variables of size $N^{d / 2} \times N^{d / 2}$, whereby while general-purpose SDP algorithms will solve such an optimization to fixed accuracy in polynomial time in $N$, the bound on their runtime will be of order $N^{O(d)}$ \cite{ben:01}.
It is therefore important to know whether optimizing over generalized elliptopes of constant degree $d > 2$ actually improves the bounds on $\M(\bW)$ achieved by optimizing over $\fE_2^N$ on specific classes of optimization problems as $N \to \infty$.\footnote{There is an extensive literature relating this question to the Unique Games Conjecture \cite{khot:05,trevisan:12:survey}, which implies for several problems, most notably MaxCut, that optimizing over generalized elliptopes of constant degree cannot improve the worst-case approximation ratio achieved by optimizing over $\fE_2^N$ (see e.g.\ \cite{khot:07,raghavendra:08}). On the other hand, similar questions in the average case, for instance the typical quality of approximation that can be achieved for natural random models of $\bW$, are only beginning to be understood. For instance, \cite{montanari:16} is a recent work in this direction concerning SDP relaxations for MaxCut and related problems on random graphs.}

Nonetheless, although many fundamental geometric results on $\fE_2^N$ were obtained soon after this relaxation was introduced \cite{laurent:95,laurent:96}, relatively little remains known about the geometry of $\fE_{d}^N$ for specific constant values of $d > 2$.
Instead, most progress on higher-degree SOS relaxations has been through the algebraic interpretation of SOS, and concerning the limit $d \to \infty$ after $N \to \infty$.
A prominent line of work in this direction is on negative results for SOS, proving that its approximations are poor for some problems even at high degrees.
This began with the works of Grigoriev \cite{grigoriev:01:2,grigoriev:01,grigoriev:02} giving lower bounds for the SOS degree needed prove linear systems over $\ZZ / 2\ZZ$ unsatisfiable.
With similar techniques, Laurent \cite{laurent:03} produced examples of inequalities over $\fC^N$ that do not hold over $\fE_d^N$ until $d = \Omega(N)$.
Schoenebeck \cite{schoenebeck:08} later rediscovered Grigoriev's results and emphasized their application to constraint satisfaction problems.
More recently, similar ideas yielded rapid progress on the planted clique problem \cite{meka:15,deshpande:15,hopkins:18,barak:16}, culminating in a framework called \emph{pseudo-calibration} introduced in \cite{barak:16} for constructing pseudomoment matrices based on statistical reasoning, which has since been applied to several other problems \cite{hopkins:17,raghavendra:18}.
The general proof technique behind these results is to first construct candidate pseudomoment matrices entrywise, and then analyze whether those candidates satisfy the necessary constraints.
Among those constraints, the requirement that the pseudomoment matrix be psd is notoriously difficult to verify.

In this paper, we make two contributions to the state of affairs outlined above.
First, we derive some novel geometric facts about $\fE_4^N$, the first generalized elliptope, relating its extrema to the facial geometry of $\fE_2^N$.
Second, in doing so, we introduce a technique for constructing SOS pseudomoment matrices as Gram matrices of collections of vectors, which are therefore guaranteed by construction to be psd.
We use this to provide an alternate proof of part of Laurent's theorem \cite{laurent:03} mentioned above, and believe that this idea may eventually lead to simplified proofs of other difficult negative results in the SOS optimization literature.

\begin{figure}
    \centering
    \includegraphics[scale=0.55]{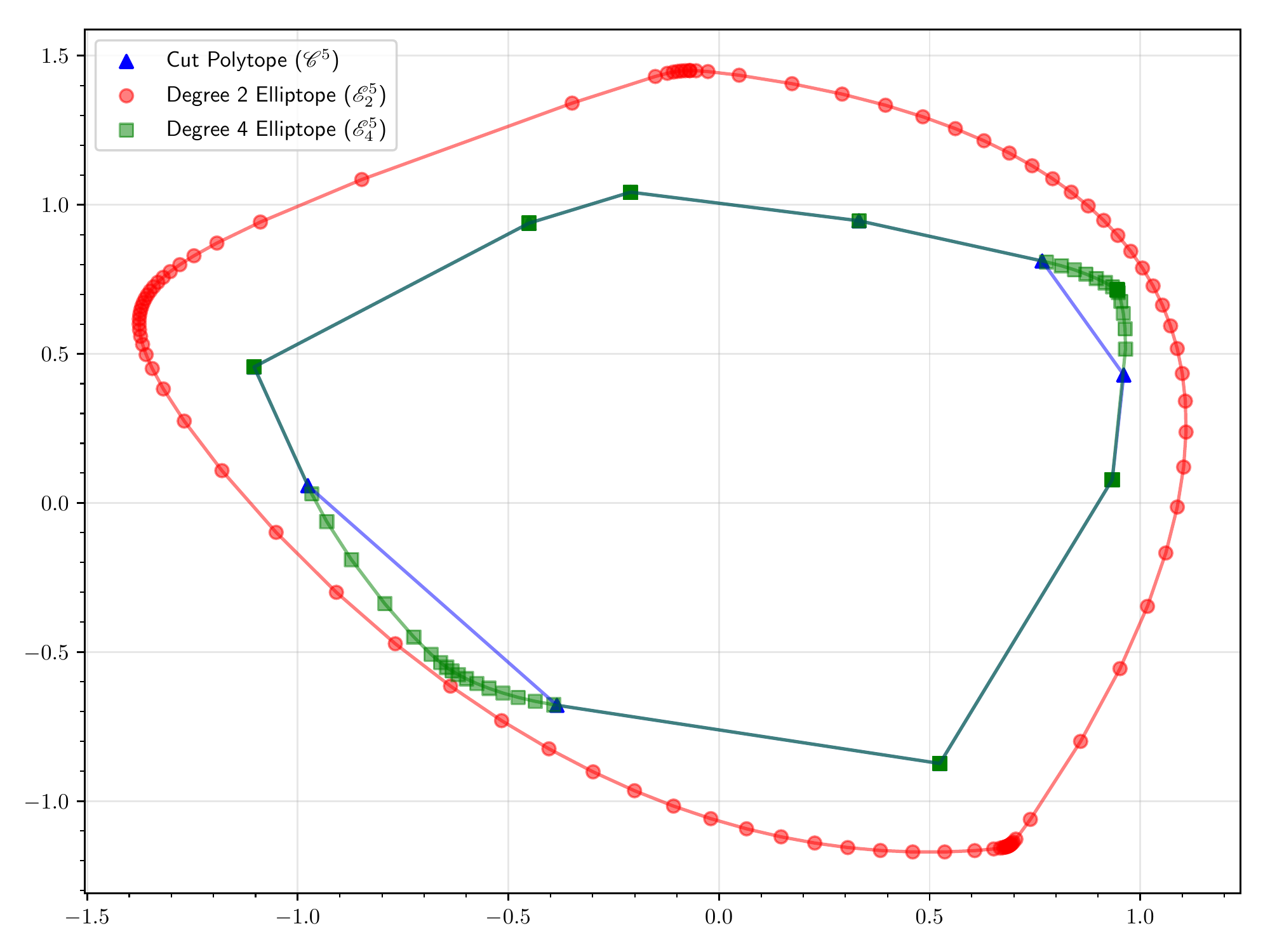}
    \caption{\textbf{Cut polytope and elliptopes in low dimension.} We plot the cross-section of $\fC^5$, $\fE_2^5$, and $\fE_4^5$ by an isotropic random subspace (in the off-diagonal matrix entries) by numerically solving suitably constrained linear and semidefinite programs. This is different from the projection of these sets onto such a subspace, for which we observe that $\fE_4^5$ is almost always indistinguishable from $\fC^5$. Note also that $N = 5$ is the lowest dimension where $\fE_4^N \neq \fC^N$.}
\end{figure}

\section{Main Results}

\subsection{Preliminaries}

\paragraph{Pseudomoment matrices.}
We first present the description of $\fE_d^N$ in terms of the \emph{pseudomoment} interpretation of SOS optimization.
The formal definition is as follows.

\begin{definition}
    For a finite set $\sA$, we write $\sA^{\leq d}$ for the collection of finite (possibly empty) strings in the elements of $\sA$ of length at most $d$, i.e.
    \begin{equation}
        \sA^{\leq d} \colonequals \{\emptyset\} \sqcup \sA \sqcup \sA^2 \sqcup \cdots \sqcup \sA^d.
    \end{equation}
    The sets $\sA^{\leq d}$ for all $d \in \NN \colonequals \{n \in \ZZ: n \geq 0\}$ may be thought of as embedded in the set $\sA^{< \infty}$ of all finite strings in the elements of $\sA$.
    For $\bs \in \sA^{< \infty}$, we write $|\bs|$ for the length of $\bs$, and for $\bs, \bt \in \sA^{< \infty}$, we write $\bs \circ \bt \in \sA^{|\bs| + |\bt|}$ for the concatenation of $\bs$ and $\bt$.
\end{definition}

\begin{definition}
    For $\bs \in \sA^{< \infty}$, $\odd(\bs) \subset \sA$ denotes the set of symbols that occur an odd number of times in $\bs$.
\end{definition}

\begin{definition}
    \label{def:truncated-pm}
    $\fE_d^N \subset \RR^{N \times N}_{\sym}$ is the set of $\bX$ such that there exists $\bY \in \RR^{N^{d / 2} \times N^{d / 2}}$, whose row and column indices we identify with the set $[N]^{d / 2}$ ordered lexicographically, having $Y_{(1 \cdots 1i)(1 \cdots 1j)} = X_{ij}$ for all $i, j \in [N]$ and satisfying the following properties:
    \begin{enumerate}
    \item $\bY \succeq \bm 0$.
    \item $Y_{\bs\bt}$ only depends on $\odd(\bs \circ \bt)$.
    \item $Y_{\bs\bt} = 1$ whenever $\odd(\bs \circ \bt) = \emptyset$.
    \end{enumerate}
    In this case, we say $\bY$ is a \emph{degree $d$ pseudomoment matrix over $\{ \pm 1\}^N$} (or, more properly, \emph{for the constraints $\{x_i^2 - 1 = 0: i \in [N]\}$}) which \emph{extends} $\bX$.
\end{definition}
\noindent
(In Appendix~\ref{app:sos-reductions} we present the standard reductions that allow us to restrict our attention to only the pseudomoments of degree exactly $d$ for the case of relaxing the problem $\M(\bW)$.)
In this paper, for the sake of brevity, we will simply refer to such $\bY$ as a \emph{degree $d$ pseudomoment matrix}, since we only study optimization over $\{ \pm 1\}^N$.

We will also only study degree 4 pseudomoment matrices in detail (though it will occasionally be instructive to refer to higher degree cases for comparison), so we briefly give a more concrete version of the above conditions for that case.
\begin{proposition}
    \label{prop:deg-4-pm-def}
    Let $\bY \in \RR^{N^2 \times N^2}$, with the row and column indices of $\bY$ identified with pairs $(ij) \in [N]^2$ ordered lexicographically.
    Then, $\bY$ is a degree 4 pseudomoment matrix if and only if the following conditions hold:
    \begin{enumerate}
    \item $\bY \succeq \bm 0$.
    \item $Y_{(ij)(kk)}$ does not depend on the index $k$.
    \item $Y_{(ii)(ii)} = 1$ for every $i \in [N]$.
    \item $Y_{(ij)(k\ell)}$ is invariant under permutations of the indices $i, j, k, \ell$.
    \end{enumerate}
\end{proposition}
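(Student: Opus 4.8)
The plan is to unwind \Cref{def:truncated-pm} in the case $d = 4$: a matrix $\bY \in \RR^{N^2 \times N^2}$ (indices identified with $[N]^2$) is a degree 4 pseudomoment matrix exactly when it satisfies the three abstract conditions there, namely $\bY \succeq \bm 0$, that $Y_{\bs\bt}$ depends only on $\odd(\bs \circ \bt)$, and that $Y_{\bs\bt} = 1$ whenever $\odd(\bs \circ \bt) = \emptyset$. Here $\bs = (ij)$ and $\bt = (k\ell)$, so $\bs \circ \bt$ is the length-$4$ string $ijk\ell$. (One should note in passing that an admissible $\bY$ automatically extends $X_{ij} \colonequals Y_{(1i)(1j)}$, which lies in $\RR^{N \times N}_{\sym}$ because $(1i)(1j)$ and $(1j)(1i)$ are permutations of one another; so ``$\bY$ is a degree 4 pseudomoment matrix'' really is just the conjunction of the three abstract conditions.) The first abstract condition is verbatim condition (1), so the content is the equivalence of the remaining two abstract conditions with (2)--(4). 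The single structural fact I would isolate up front is that $\odd$ of a length-$4$ string is a function of the underlying multiset and always has even cardinality, hence $|\odd(ijk\ell)| \in \{0, 2, 4\}$.

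For the forward direction, suppose $\bY$ satisfies the abstract conditions. Condition (4) is immediate, since permuting $i, j, k, \ell$ preserves the multiset $\{i,j,k,\ell\}$ and hence $\odd(ijk\ell)$. For condition (2), a short case check on the coincidences among $i, j, k$ shows $\odd(ijkk) = \odd(ij)$ in every case — the two copies of $k$ either cancel, or, when $k \in \{i,j\}$, shift a multiplicity by two — so the abstract condition forces $Y_{(ij)(kk)}$ to be independent of $k$. Condition (3) is the instance $\bs = \bt = (ii)$ of the third abstract condition, since $\odd(iiii) = \emptyset$.

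For the reverse direction, suppose $\bY$ satisfies (1)--(4). By (4), $Y_{(ij)(k\ell)}$ is a function of the multiset $M = \{i,j,k,\ell\}$; the task is to show it is a function of $\odd(M)$ alone and equals $1$ when $\odd(M) = \emptyset$, which I would do by splitting on $|\odd(M)|$. If $|\odd(M)| = 4$ then $M$ is determined by $\odd(M)$ and there is nothing to check. If $|\odd(M)| = 2$, say $\odd(M) = \{a,b\}$ with $a \neq b$, the only possible shapes for $M$ are $\{a,a,a,b\}$, $\{a,b,b,b\}$, and $\{a,b,c,c\}$ with $c \notin \{a,b\}$; after reordering via (4) these become $Y_{(ab)(aa)}$, $Y_{(ab)(bb)}$, $Y_{(ab)(cc)}$, which are all equal by (2) (they are $Y_{(ab)(kk)}$ for $k = a, b, c$). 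If $|\odd(M)| = 0$, the only shapes are $\{a,a,a,a\}$ and $\{a,a,b,b\}$ with $a \neq b$, giving $Y_{(aa)(aa)} = 1$ by (3) and $Y_{(aa)(bb)} = Y_{(aa)(aa)} = 1$ by (2), which simultaneously verifies the third abstract condition. This establishes the equivalence.

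I expect no genuine obstacle: this is essentially a translation of notation. The only step that takes a moment's thought is the $|\odd(M)| = 2$ case of the reverse direction, where one must notice that (4) by itself does \emph{not} merge the three distinct multiset shapes sharing the $\odd$-set $\{a,b\}$, and that it is condition (2) — the entrywise reflection of the constraint $x_k^2 = 1$ — that collapses them; but even this is a one-line check. The value of the proposition is to have this concrete entrywise form in hand for the geometric arguments to follow, not to prove anything deep.
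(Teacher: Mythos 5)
Your proof is correct and complete; the paper itself states this proposition without proof, treating it as a direct unwinding of Definition~\ref{def:truncated-pm}, which is exactly what you have carried out. The one mildly non-obvious step — that condition (4) alone does not collapse the three multiset shapes with $\odd$-set $\{a,b\}$, and that condition (2) is what does — is correctly identified and handled in your case analysis.
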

\noindent
The intuitive meaning of these conditions is that $\bY$ contains \emph{pseudomoments} of degree 4 of a fictitious distribution over $\bx \in \{ \pm 1\}^N$, entry $Y_{(ij)(k\ell)}$ equaling the \emph{pseudoexpectation} of $x_ix_jx_kx_\ell$.
We represent the constraint $\bx \in \{ \pm 1\}^N$ as the system of polynomial constraints $x_i^2 = 1$ for $i \in [N]$, which constrains the pseudomoments per Conditions 2 and 3 of the definition.
The remaining constraints are properties that the moments of probability distributions in general must satisfy.
Conditions 1 through 4 taken together, however, still do not imply that a distribution over $\{ \pm 1\}^N$ exists whose moments equal the specified pseudomoments.

The matrix $\bX$, a minor of $\bY$, contains the degree 2 pseudomoments (which, as also discussed in greater detail in Appendix~\ref{app:sos-reductions}, are in this case already among the degree 4 pseudomoments).
We may then think of $\fE_4^N$ relaxing $\fC^N$ in the sense that $\fC^N$ is the set of degree 2 moment matrices of true distributions over $\{ \pm 1\}^N$, while $\fE_4^N$ is the set of degree 2 pseudomoment matrices that ``admit a consistent extension to degree 4.''

Note that the case $d > 2$ is fundamentally different from $d = 2$ in the important regard that, while $\fE_2^N$ is itself an affine slice of the psd cone (making it a so-called \emph{spectrahedron}), there does not appear to be a straightforward way to describe any $\fE_d^N$ with $d > 2$ in this way.
(On the other hand, it does not appear to be established that $\fE_d^N$ actually fails to be a spectrahedron for $d > 2$, though this is a natural conjecture.
Some similar results in simpler cases are presented in \cite{blekherman:12}.)
Rather, these sets are most directly described as projections of spectrahedra (so-called \emph{spectrahedral shadows}), the sets of all degree $d$ pseudomoment matrices, from a higher-dimensional space.
Thus, we should expect the generalized elliptopes to have a more subtle geometry than the classical elliptope, and our subject $\fE_4^N$ is the first of these subtler cases.

No more than the above is needed to understand our results, but for a more general and detailed presentation of the pseudomoment-and-pseudoexpectation optimization framework we direct the reader to \cite{barak:14:survey,lasserre:01,laurent:09}.

\paragraph{Convex geometry.}
We next recall some basic notions from the geometry of convex sets.
In what follows, let $K \subseteq \RR^d$ be a closed convex set.

\begin{definition}
    The \emph{dimension} of $K$ is the dimension of the affine hull of $K$, denoted $\dim(K)$.
\end{definition}

\begin{definition}
    A convex subset $F \subseteq K$ is a \emph{face of $K$} if whenever $\theta\bX + (1 - \theta)\bY \in F$ with $\theta \in (0, 1)$ and $\bX, \bY \in K$, then $\bX, \bY \in F$.
\end{definition}

\begin{definition}
    $\bX \in K$ is an \emph{extreme point of $K$} if $\{\bX\}$ is a face of $K$ (of dimension zero).
\end{definition}

\begin{definition}
    The intersection of all faces of $K$ containing $\bX \in K$ is the unique \emph{smallest face of $K$ containing $\bX$}, denoted $\mathsf{face}_K(\bX)$.
\end{definition}

\begin{definition}
    The \emph{perturbation of $\bX$ in $K$} is the subspace
    \begin{equation}
        \mathsf{pert}_K(\bX) \colonequals \left\{\bA \in \RR^d: \bX \pm t\bA \in K \text{ for all } t > 0 \text{ sufficiently small }\right\}.
    \end{equation}
\end{definition}
\noindent
The perturbation will come up naturally in our results, so we present the following useful fact giving its connection to the more intuitive objects from facial geometry.
\begin{proposition}
    Let $\bX \in K$.
    Then,
    \begin{equation}
        \mathsf{face}_K(\bX) = K \cap \left(\bX + \mathsf{pert}_K(\bX)\right).
    \end{equation}
    In particular, the affine hull of $\mathsf{face}_K(\bX)$ is $\bX + \mathsf{pert}_K(\bX)$, and therefore
    \begin{equation}
        \dim(\mathsf{face}_K(\bX)) = \dim(\mathsf{pert}_K(\bX))
    \end{equation}
    (in which there is a harmless reuse of notation between the dimension of a convex set and the dimension of a subspace).
\end{proposition}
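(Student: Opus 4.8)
The plan is to establish the displayed set equality by proving both inclusions, and then to read off the affine-hull and dimension statements from it, using that $\mathsf{pert}_K(\bX)$ is a linear subspace. This last point is elementary and worth recording first: $\mathsf{pert}_K(\bX)$ is closed under scaling by an arbitrary real (rescale $t$ and use the $\pm$ in the definition), and if $\bA, \bB \in \mathsf{pert}_K(\bX)$ then for all small $t > 0$ the four points $\bX \pm 2t\bA$ and $\bX \pm 2t\bB$ lie in $K$, so by convexity the midpoints $\bX \pm t(\bA + \bB)$ do too, giving $\bA + \bB \in \mathsf{pert}_K(\bX)$.

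Write $F \colonequals \mathsf{face}_K(\bX)$. For the inclusion $F \subseteq K \cap (\bX + \mathsf{pert}_K(\bX))$ I would invoke the standard fact that $\bX \in \mathrm{relint}(F)$: if $\bX$ were on the relative boundary of $F$, a supporting hyperplane of $F$ at $\bX$ within $\mathrm{aff}(F)$ would cut out a proper exposed face of $F$ still containing $\bX$, which is again a face of $K$ and contradicts the minimality of $F$. Granting $\bX \in \mathrm{relint}(F)$, for any $\bY \in F$ the line-segment characterization of the relative interior gives $\epsilon > 0$ with $\bX + \epsilon(\bX - \bY) \in F$, while $(1 - t)\bX + t\bY \in F$ for $t \in [0, 1]$ by convexity; setting $\bA \colonequals \bY - \bX$ we conclude $\bX \pm t\bA \in F \subseteq K$ for all sufficiently small $t > 0$, so $\bA \in \mathsf{pert}_K(\bX)$ and hence $\bY \in K \cap (\bX + \mathsf{pert}_K(\bX))$.

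Conversely, let $\bY = \bX + \bA \in K$ with $\bA \in \mathsf{pert}_K(\bX)$, and fix $t > 0$ small enough that $\bX - t\bA \in K$. Then
\begin{equation*}
  \bX = \tfrac{t}{1 + t}(\bX + \bA) + \tfrac{1}{1 + t}(\bX - t\bA)
\end{equation*}
exhibits $\bX \in F$ as a convex combination, with both weights in $(0, 1)$, of the two points $\bX + \bA$ and $\bX - t\bA$ of $K$; the defining property of the face $F$ then forces both points into $F$, so in particular $\bY \in F$. This yields $K \cap (\bX + \mathsf{pert}_K(\bX)) \subseteq F$ and hence the claimed equality.

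Finally, for the ``in particular'' assertions: since $\mathsf{pert}_K(\bX)$ is a subspace, $\bX + \mathsf{pert}_K(\bX)$ is an affine subspace containing $F$, so $\mathrm{aff}(F) \subseteq \bX + \mathsf{pert}_K(\bX)$; conversely, for any $\bA \in \mathsf{pert}_K(\bX)$ the points $\bX \pm t\bA$ (for small $t > 0$) lie in $F$ by the equality just proved, and their affine span is the entire line $\{\bX + s\bA : s \in \RR\}$, which contains $\bX + \bA$, giving the reverse containment. Thus $\mathrm{aff}(F) = \bX + \mathsf{pert}_K(\bX)$, and $\dim(\mathsf{face}_K(\bX)) = \dim(\mathsf{pert}_K(\bX))$ because an affine subspace and its direction subspace have the same dimension. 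The only genuinely non-combinatorial ingredient — and the step to be careful about — is the fact $\bX \in \mathrm{relint}(\mathsf{face}_K(\bX))$, which rests on supporting-hyperplane/separation theory for convex sets; everything else is manipulation of convex combinations.
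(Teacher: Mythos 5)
Your proof is correct, but it takes a genuinely different route from the paper's in one of the two inclusions. For $K \cap (\bX + \mathsf{pert}_K(\bX)) \subseteq \mathsf{face}_K(\bX)$ you argue exactly as the paper does: write $\bX$ as a proper convex combination of $\bY$ and a second point in $K$, then invoke the defining property of a face. The divergence is in the other inclusion. You establish $\mathsf{face}_K(\bX) \subseteq K \cap (\bX + \mathsf{pert}_K(\bX))$ by first proving $\bX \in \mathrm{relint}(\mathsf{face}_K(\bX))$ via the proper supporting hyperplane theorem (a nontrivial supporting hyperplane at a relative-boundary point cuts out a smaller face, contradicting minimality), then using the line-segment characterization of the relative interior. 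The paper instead sidesteps separation theory entirely: it observes that the set $G \colonequals K \cap (\bX + \mathsf{pert}_K(\bX))$ admits the clean characterization ``$\bY \in G$ iff $\bX = \theta \bY + (1 - \theta)\bY'$ for some $\bY' \in K$ and $\theta \in (0,1]$,'' and then shows directly from this characterization that $G$ is itself a face of $K$; since $G$ contains $\bX$, minimality forces $\mathsf{face}_K(\bX) \subseteq G$. The paper's route is more elementary and self-contained (pure manipulation of convex combinations, no Hahn--Banach); yours leans on a heavier but standard tool and is arguably quicker to state for a reader who already has the relint-of-minimal-face fact available. You also explicitly prove that $\mathsf{pert}_K(\bX)$ is a subspace and supply a careful argument for the affine-hull claim, both of which the paper treats as immediate; these are small improvements in completeness.
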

\begin{proof}
    Let $G \colonequals K \cap (\bX + \mathsf{pert}_K(\bX))$.
    It is simple to check that $\bY \in G$ if and only if there exists $\bY^\prime \in K$ and $\theta \in (0, 1]$ with $\bX = \theta \bY + (1 - \theta)\bY^\prime$ (and if $\theta < 1$ then $\bY^\prime \in G$ as well).

    Then, if $F$ is any face of $K$ containing $\bX$, and $\bY \in G$, there exists $\bY^\prime \in K$ and $\theta \in (0, 1]$ such that $\bX = \theta \bY + (1 - \theta)\bY^\prime$.
    If $\theta = 1$, then $\bY = \bX \in F$.
    Otherwise, $\bY \in F$ by the definition of a face.
    Thus, in any case $\bY \in F$, so $G \subseteq F$.
    Since this holds for any face $F$ containing $\bX$, in fact $G \subseteq \mathsf{face}_{K}(\bX)$.

    It then suffices to show that $G$ is a face of $K$.
    Suppose $\bY \in G$, and $\bY_1, \bY_2 \in K$ and $\theta \in (0, 1)$ with $\bY = \theta\bY_1 + (1 - \theta)\bY_2$.
    Since $\bY \in G$, there exists $\bZ \in K$ and $\phi \in (0, 1]$ such that
    \begin{align}
      \bX
      &= \phi \bY + (1 - \phi)\bZ \nonumber \\
      &= \phi \big(\theta\bY_1 + (1 - \theta)\bY_2\big) + (1 - \phi)\bZ \nonumber \\
      &= \phi \theta \bY_1 + \phi(1 - \theta)\bY_2 + (1 - \phi)\bZ.
    \end{align}
    This is a convex combination of three points where the coefficients of $\bY_1$ and $\bY_2$ are strictly positive, so by the previous characterization we have $\bY_1, \bY_2 \in G$, completing the proof.
\end{proof}

\paragraph{Finite-dimensional frames.}
Finally, we review some definitions of special types of \emph{frames} in finite dimension, which are overcomplete collections of vectors with certain favorable geometric properties.
A more thorough introduction, in particular for the more typical applications of these definitions in signal processing and harmonic analysis, may be found in \cite{casazza:12}.
In what follows, let $\bv_1, \dots, \bv_N \in \RR^r$ be unit vectors and let $\bX \colonequals \Gram(\bv_1, \dots, \bv_N)$ (meaning that $X_{ij} = \la \bv_i, \bv_j\ra$ for $i, j \in [N]$).

\begin{definition}
    The vectors $\bv_i$ form a \emph{unit norm tight frame (UNTF)} if any of the following equivalent conditions hold:
    \begin{enumerate}
    \item $\sum_{i = 1}^N \bv_i\bv_i^\top = \frac{N}{r}\bm I_r$.
    \item The eigenvalues of $\bX$ all equal either zero or $\frac{N}{r}$.
    \item $\sum_{i = 1}^N \sum_{j = 1}^N \la \bv_i, \bv_j \ra^2 = \frac{N^2}{r}$.
    \end{enumerate}
\end{definition}
\noindent
(The equivalence of the final condition is elementary but less obvious; the quantity on its left-hand side is sometimes called the \emph{frame potential} \cite{benedetto:03}.)

\begin{definition}
    The $\bv_i$ form an \emph{equiangular tight frame (ETF)} if they form a UNTF, and there exists $\alpha \in [0, 1]$, called the \emph{coherence} of the ETF, such that whenever $i \neq j$ then $|X_{ij}| = \alpha$.
\end{definition}
\noindent
The following remarkable result shows that ETFs are extremal among UNTFs in the sense of \emph{worst-case coherence}.
Moreover, when an ETF exists, $\alpha$ is determined by $N$ and $r$.
\begin{proposition}[Welch Bound \cite{welch:74}]
    \label{prop:etf-welch-bound}
    If $\bv_1, \dots, \bv_N \in \RR^{r}$ with $\|\bv_i\|_2 = 1$, then
    \begin{equation}
        \max_{1 \leq i, j \leq N \atop i \neq j} |\la \bv_i, \bv_j \ra| \geq \sqrt{\frac{N - r}{r(N - 1)}},
    \end{equation}
    with equality if and only if $\bv_1, \dots, \bv_N$ form an ETF.
\end{proposition}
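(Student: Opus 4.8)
The plan is to bound the worst-case squared inner product below by the \emph{average} squared inner product, using the frame potential characterization of UNTFs (Condition~3 in the UNTF definition) as the key intermediary. First I would separate the diagonal from the off-diagonal terms of the frame potential: since $\|\bv_i\|_2 = 1$, we have $\la \bv_i, \bv_i \ra^2 = 1$, so
\begin{equation}
  \sum_{i=1}^N \sum_{j=1}^N \la \bv_i, \bv_j \ra^2 = N + \sum_{i \neq j} \la \bv_i, \bv_j \ra^2.
\end{equation}
The main inequality then comes from a lower bound on the frame potential itself, independent of the ETF hypothesis: writing $\bG \colonequals \sum_{i=1}^N \bv_i \bv_i^\top \in \RR^{r \times r}$, one has $\sum_{i,j} \la \bv_i, \bv_j\ra^2 = \|\bG\|_F^2 = \sum_{k=1}^r \lambda_k(\bG)^2$, while $\Tr(\bG) = \sum_i \|\bv_i\|_2^2 = N$. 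By Cauchy--Schwarz applied to the eigenvalues of $\bG$ (which is psd, hence has at most $r$ nonzero eigenvalues), $\sum_{k} \lambda_k^2 \geq \frac{1}{r}\big(\sum_k \lambda_k\big)^2 = N^2/r$, with equality exactly when all $r$ eigenvalues are equal, i.e.\ $\bG = \frac{N}{r}\bm I_r$, which is Condition~1 in the UNTF definition. Combining, $\sum_{i \neq j}\la \bv_i, \bv_j\ra^2 \geq \frac{N^2}{r} - N = \frac{N(N-r)}{r}$.

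Next I would pass from this sum to the maximum: there are $N(N-1)$ ordered off-diagonal pairs, so
\begin{equation}
  \max_{i \neq j} |\la \bv_i, \bv_j\ra|^2 \geq \frac{1}{N(N-1)} \sum_{i \neq j} \la \bv_i, \bv_j\ra^2 \geq \frac{N - r}{r(N-1)},
\end{equation}
and taking square roots gives the stated bound. For the equality case, equality in the averaging step forces $|\la \bv_i, \bv_j\ra|$ to be constant over all $i \neq j$ — which is precisely the equiangularity condition with that common value equal to $\alpha = \sqrt{(N-r)/(r(N-1))}$ — and equality in the frame potential bound forces the UNTF condition as noted above; conversely, if the $\bv_i$ form an ETF then the frame potential equals $N^2/r$ and the common off-diagonal modulus is forced to equal the Welch value, so both inequalities are tight. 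This also recovers the claim that $\alpha$ is determined by $N$ and $r$ when an ETF exists.

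The argument is essentially self-contained and the steps are all elementary; the only point requiring a little care is the equality analysis, where one must check that equality in the two separate inequalities is jointly achievable and together gives exactly the ETF conditions rather than something weaker — in particular that constancy of $|\la\bv_i,\bv_j\ra|$ over $i\ne j$ combined with the tight-frame condition is the \emph{definition} of an ETF with the asserted coherence, so no extra work is needed. I expect no serious obstacle; the main thing to get right is bookkeeping of the count $N(N-1)$ versus $\binom{N}{2}$ and the placement of the factor of $r$ in the Cauchy--Schwarz step (being careful that $\bG$ has rank at most $\min(N,r) = r$ in the regime of interest).
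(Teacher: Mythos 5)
The paper does not prove this proposition: it is stated as a classical known result with a citation to Welch's original work, so there is no ``paper's proof'' to compare against. Your argument is the standard and correct proof of the Welch bound: pass from the maximum to the average of the off-diagonal squared inner products, then lower-bound the frame potential $\sum_{i,j}\la\bv_i,\bv_j\ra^2 = \|\sum_i\bv_i\bv_i^\top\|_F^2$ by $N^2/r$ via Cauchy--Schwarz on the eigenvalues of $\bG = \sum_i\bv_i\bv_i^\top$ (a trace-$N$ psd matrix of size $r\times r$), and observe that the two equality conditions are exactly equiangularity and the tight-frame condition. The reasoning is sound, and it is also consistent with how the paper itself uses the surrounding machinery (the frame-potential characterization of UNTFs appears as Condition~3 in the paper's UNTF definition, and the nearby proof of the Gerzon bound uses the same Gram-matrix computation of $\la\bv_i\bv_i^\top,\bv_j\bv_j^\top\ra$). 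The one point worth making explicit, which you gesture at but do not state, is that the bound is only meaningful when $N\geq r$: for $N<r$ the quantity under the square root is negative, and correspondingly equality in the eigenvalue Cauchy--Schwarz step (all $r$ eigenvalues of $\bG$ equal) is impossible since $\rank(\bG)\leq N<r$. In the regime $N\geq r$ the argument is complete.
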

\noindent
ETFs usually arise from combinatorial constructions and should generally be understood as rigid and highly structured objects.
For instance, there remain many open problems about the pairs of dimensions $(N, r)$ for which ETFs do or do not exist.
More comprehensive references on these aspects of the theory of ETFs include \cite{tropp:07,casazza:08,fickus:15:tables}.

\subsection{Gramian Description of $\fE_4^N$}
Rather than directly working with the pseudomoment interpretation, we will study $\fE_4^N$ by pursuing an analogy with the following geometric description of the elliptope:
\begin{equation}
    \fE_2^N = \left\{\bX \in \RR^{N \times N}_{\sym}: \exists  \bv_1, \dots, \bv_N \in \RR^r \text{ such that } \|\bv_i\|_2 = 1 \text{ and } \bX = \Gram(\bv_1, \dots, \bv_N)\right\},
\end{equation}
Besides being used in many geometric arguments about the elliptope, which we will review later, in applications this description is central to the rounding procedures of \cite{goemans:95,nesterov:98} as well as the efficient rank-constrained approximations of \cite{burer:03}.
Our first result gives an analogous description of $\fE_4^N$ as Gram matrices of certain collections of unit vectors.
The following family of matrices plays an important role in this description.
\begin{definition}
    \label{def:blocksym}
    For $\bM \in \RR^{rN \times rN}$, we write $\bM_{[ij]}$ with $i, j \in [N]$ for the $r \times r$ block in position $(i, j)$ when $\bM$ is viewed as a block matrix.
    With this notation, let $\sB(N, r) \subset \RR^{rN \times rN}_{\sym}$ consist of matrices $\bM$ satisfying the following properties:
    \begin{enumerate}
    \item[1.] $\bM \succeq 0$.
    \item[2.] $\bM_{[ii]} = \bm I_r$ for all $i \in [N]$.
    \item[3.] $\bM_{[ij]} = \bM_{[ij]}^\top$ for all $i, j \in [N]$.
    \end{enumerate}
\end{definition}
\noindent
In terms of these matrices, $\fE_4^N$ admits the following description.
\begin{theorem}
    \label{thm:sos4}
    Let $\bv_1, \dots, \bv_N \in \RR^{r}$, let $\bX \colonequals \Gram(\bv_1, \dots, \bv_N) \in \RR^{N \times N}_{\sym}$, let $\bV \in \RR^{r \times N}$ have $\bv_1, \dots, \bv_N$ as its columns, and let $\bv \colonequals \vec(\bV) \in \RR^{rN}$ be the concatenation of the $\bv_i$.
    Then, $\bX \in \fE_4^N$ if and only if $\sum_{i = 1}^N\|\bv_i\|_2^2 = N$ and there exists $\bM \in \sB(N, r)$ such that $\bv^\top \bM \bv = N^2$.

    Moreover, if $\bX \in \fE_4^N$ and a degree 4 pseudomoment matrix $\bY \in \RR^{N^2 \times N^2}_{\sym}$ extends $\bX$, then there exists $\bM \in \sB(N, r)$ with $\bv^\top \bM \bv = N^2$ and
    \begin{align}
      \bY &= (\bm I_N \otimes \bV)^\top \bM (\bm I_N \otimes \bV), \text{ i.e.} \label{eq:M-Y-equiv-mx} \\
        Y_{(ij)(k\ell)} &= \bv_i^\top \bM_{[jk]} \bv_\ell \text{ for all } i, j, k, \ell \in [N]. \label{eq:M-Y-equiv}
    \end{align}
    Conversely, if $\sum_{i = 1}^N\|\bv_i\|_2^2 = N$ and $\bM \in \sB(N, r)$ with $\bv^\top \bM \bv = N^2$, then $\bY$ as defined by \eqref{eq:M-Y-equiv-mx} is a degree 4 pseudomoment matrix extending $\bX$.
\end{theorem}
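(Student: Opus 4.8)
The plan is to move fluidly between the pseudomoment description of $\fE_4^N$ (Definition~\ref{def:truncated-pm}, in the concrete form of Proposition~\ref{prop:deg-4-pm-def}) and the block-matrix description via $\sB(N,r)$, using \eqref{eq:M-Y-equiv} as the dictionary. The central observation is that a degree $4$ pseudomoment matrix $\bY$, being psd and of size $N^2 \times N^2$, is itself a Gram matrix of some vectors $\bw_{(ij)} \in \RR^{N^2}$; the constraint $Y_{(ij)(kk)}$ depends only on the pair $\{i,j\}$, combined with $Y_{(ii)(ii)}=1$, should force the ``diagonal-block'' structure that identifies $\bY$ with a conjugation of an element of $\sB(N,r)$ by $\bm I_N \otimes \bV$. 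I would organize the argument around the linear map $\Phi \colon \bM \mapsto (\bm I_N \otimes \bV)^\top \bM (\bm I_N \otimes \bV)$, i.e. $\Phi(\bM)_{(ij)(k\ell)} = \bv_i^\top \bM_{[jk]} \bv_\ell$, and show that $\Phi$ carries the constraints defining $\sB(N,r)$ (together with $\bv^\top \bM \bv = N^2$ and $\sum \|\bv_i\|_2^2 = N$) onto exactly the constraints of Proposition~\ref{prop:deg-4-pm-def}, and conversely.

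The concrete steps, in order. \emph{(i) The easy direction.} Suppose $\sum_i \|\bv_i\|_2^2 = N$ and $\bM \in \sB(N,r)$ with $\bv^\top \bM \bv = N^2$; set $\bY = \Phi(\bM)$. Positive semidefiniteness of $\bY$ is immediate from $\bM \succeq 0$. Using $\bM_{[jk]} = \bM_{[jk]}^\top$ (Property~3 of $\sB$) and the symmetry of the Gram form, check that $Y_{(ij)(k\ell)} = \bv_i^\top \bM_{[jk]} \bv_\ell$ is invariant under all permutations of $i,j,k,\ell$: the swap $i \leftrightarrow j$ and $k \leftrightarrow \ell$ comes from the blockwise symmetry of $\bM$ itself (blocks $\bM_{[jk]}$ versus $\bM_{[ik]}$, etc.), the swap of the two coordinate slots is transposition, and it suffices to generate $S_4$ from these. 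For Condition~2, $Y_{(ij)(kk)} = \bv_i^\top \bM_{[jk]} \bv_k$; this is subtler, and I would use $\bM_{[kk]} = \bm I_r$ (Property~2) together with $\bM \succeq 0$ to argue that the relevant entries do not actually depend on $k$ — this is exactly where the tightness constraint $\bv^\top \bM \bv = N^2$ must enter. \emph{(ii) Extracting the vectors from $\bw$.} Here is the key lemma I expect to prove: if $\bM \succeq 0$ has $\bM_{[ii]} = \bm I_r$ for all $i$, then for any $\bx \in \RR^{rN}$ written in blocks $\bx = (\bx_1,\dots,\bx_N)$ one has $\bx^\top \bM \bx \le N \sum_i \|\bx_i\|_2^2$, with equality iff all the ``off-diagonal'' actions align in a rank-one fashion — specifically iff $\bM_{[ij]}\bx_j = \bx_i \cdot (\text{scalar})$ or, more cleanly, iff there is a single direction forcing $\bM(\bx \text{-related vector})$ to be an eigenvector at the top. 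Applied with $\bx = \bv$ and using $\sum_i \|\bv_i\|_2^2 = N$, the hypothesis $\bv^\top \bM \bv = N^2$ saturates this bound, which pins down enough of the block structure of $\bM$ (on the subspace spanned by the $\bv_i$) to verify Conditions~2 and~3 above and, in the converse direction, to reconstruct an $\bM$ from a given $\bY$. \emph{(iii) The converse direction.} Given $\bX \in \fE_4^N$ with an extending pseudomoment matrix $\bY$, factor $\bY = \bW^\top \bW$ with columns $\bw_{(ij)}$. Use Condition~2 (``$Y_{(ij)(kk)}$ independent of $k$'') and Condition~3 ($Y_{(ii)(ii)}=1$) to show that for each fixed $i$ the vectors $\{\bw_{(ik)}\}_k$ have constant inner products against any $\bw_{(j\ell)}$ in a way that lets us write $\bw_{(ij)}$ as a linear image of $\bv_j$ depending linearly on $i$; concretely, produce $\bM_{[jk]}$ as the matrix of inner products $\la \bw_{(\cdot j)}, \bw_{(\cdot k)}\ra$ appropriately, check $\bM \succeq 0$ (it is itself a Gram matrix), check $\bM_{[jj]} = \bm I_r$ from $Y_{(ij)(ij)}$ at $i=j$ plus the permutation symmetry, check Property~3 of $\sB$ from the full $S_4$-invariance, and finally verify $\bv^\top \bM \bv = \sum_{i,j,k,\ell} \bv_i^\top (\bM_{[jk]}) \bv_\ell$-type sum collapses to $\sum_{i,j,k,\ell} Y_{(ij)(k\ell)}$-type sum, which equals $N^2$ because $\sum_{j,\ell} X_{j\ell}\cdot(\cdots) $ telescopes using $\diag(\bX) = \one$ — this last numeric identity is a short computation with the normalization $\sum_i \|\bv_i\|_2^2 = N$.

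The main obstacle I anticipate is step~(ii), the tight-case analysis of the quadratic form $\bx \mapsto \bx^\top \bM \bx$ under the normalization $\bM_{[ii]} = \bm I_r$. Proving the inequality $\bv^\top \bM \bv \le N \sum_i \|\bv_i\|_2^2$ is routine (it follows from $\bM \preceq N \cdot \diag(\bm I_r, \dots, \bm I_r)$ after noting each $r\times r$ principal block is $\bm I_r$ and bounding the operator norm, or from a Schur-complement/Cauchy--Schwarz argument block by block), but \emph{characterizing equality} and then translating that characterization into the statement that $\bY = \Phi(\bM)$ satisfies Conditions~2--4 — and not merely the weaker conditions one gets for an arbitrary $\bM \in \sB(N,r)$ — is where the real content lies. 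In particular I expect that without the equality $\bv^\top\bM\bv = N^2$ the matrix $\Phi(\bM)$ need not have $Y_{(ij)(kk)}$ independent of $k$, so the tightness condition is doing essential work and must be used precisely where that invariance is checked. Everything else — the $S_4$-symmetry bookkeeping, the identification of minors, the passage from $\bY$ to its Gram vectors — I expect to be mechanical once the right equality characterization is in hand.
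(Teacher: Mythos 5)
Your high-level framing is right: view $\Phi(\bM) = (\bm I_N \otimes \bV)^\top \bM (\bm I_N \otimes \bV)$ as a dictionary, prove both directions, and use a tight quadratic-form bound (your step~(ii) is essentially the paper's Proposition~\ref{prop:blocksym-structure}). The converse direction is also on the right track in spirit: factor $\bY = \bA^\top\bA$ into blocks $\bA_1, \dots, \bA_N$, observe that $\bA_i^\top\bA_i = \bX$ for every $i$ (this is where Condition~2 gets used), and deduce the existence of $\bQ_i \in \sO(r')$ with $\bA_i = \bQ_i\bA_1$; the $\bM_{[jk]} = \bU_j^\top\bU_k$ then come from the leading $r$ columns of the $\bQ_i$ after a normalization making $\bA_1 = [\bV; \bm 0]$. (You should also handle $r > \rank(\bX)$ separately, since the reduction to full-rank Gram vectors requires padding $\bM$ with $\bm I_r - \bZ\bZ^\top$.)

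However, you have mislocated the genuine difficulty in the Gram-to-pseudomoment direction, and this is a real gap. You treat the $S_4$-invariance of $\Phi(\bM)_{(ij)(k\ell)} = \bv_i^\top\bM_{[jk]}\bv_\ell$ as mechanical bookkeeping and Condition~2 as the subtle part; it is the other way around. Once the tightness $\bv^\top\bM\bv = N^2$ together with $\|\bv\|^2 = N$ pins down $\bM\bv = N\bv$, hence $\bM_{[jk]}\bv_k = \bv_j$ (Claim~3 of Proposition~\ref{prop:blocksym-structure}), Condition~2 is immediate: $Y_{(ij)(kk)} = \bv_i^\top\bM_{[jk]}\bv_k = \la\bv_i,\bv_j\ra$. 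In contrast, your claimed generators of $S_4$ do not generate $S_4$: the identity $Y_{(ij)(k\ell)} = Y_{(k\ell)(ij)}$ (full symmetry of $\bY$) is the permutation $(ik)(j\ell)$, and $\bM_{[jk]}^\top = \bM_{[jk]} = \bM_{[kj]}$ gives $Y_{(ij)(k\ell)} = Y_{(ik)(j\ell)}$, i.e.\ the transposition $(jk)$; together these generate a dihedral subgroup of order $8$, not all $24$ permutations. The remaining two equalities $Y_{(ij)(k\ell)} = Y_{(ji)(k\ell)}$ and $Y_{(ij)(k\ell)} = Y_{(ij)(\ell k)}$ are genuinely nontrivial: the paper proves them by writing a factorization $\bM_{[ij]} = \bS_i\bS_j + \bR_i^\top\bR_j$ with $\bS_i$ symmetric (Proposition~\ref{prop:blocksym-factorization}), deducing $\bR_i\bv_1 = \bm 0$ and $\bR_i\bS_i\bv_1 = \bm 0$ from the tightness, and then forcing $\bR_i\bS_j\bv_1 = \bR_j\bS_i\bv_1$ by recognizing that a certain Cauchy--Schwarz inequality between those two vectors is saturated. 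None of this follows from the blockwise symmetry of $\bM$ alone; the tightness $\bv^\top\bM\bv = N^2$ is essential for \emph{these} permutation identities, not only for Condition~2. You would need to supply something equivalent to this factorization-and-tight-Cauchy--Schwarz argument, and without it the proposal does not close.
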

\noindent
We think of $\bM$ as a \emph{witness} of the fact that $\bX \in \fE_4^N$, an alternative to the pseudomoment witness $\bY$ described in Proposition~\ref{prop:deg-4-pm-def}.
The second, more detailed part of Theorem~\ref{thm:sos4} gives one direction of the equivalence between these two types of witness; the other direction will be described in the course of the proof in Section~\ref{sec:pf:sos4}.

\subsection{Constraints on Pseudomoment Extensions}

Through Theorem~\ref{thm:sos4}, we will next connect the structure of degree 4 pseudomoment extensions of $\bX \in \fE_2^N$ and the local geometry of $\fE_2^N$ near $\bX$.
Theorem~\ref{thm:sos4} describes the membership of $\Gram(\bv_1, \dots, \bv_n)$ in $\fE_4^N$ in terms of a semidefinite program, whose variable is $\bM \in \sB(N, r)$.
Studying the dual of this semidefinite program and arguing through complementary slackness, we find that the optimal $\bM$ is highly constrained, as follows.
\begin{lemma}
    \label{lem:subspace-constraint}
    Let $\bv_1, \dots, \bv_N \in \SS^{r - 1}$ be a spanning set, let $\bV \in \RR^{r \times N}$ have the $\bv_i$ as its columns, let $\bv \colonequals \vec(\bV) \in \RR^{rN}$ be the concatenation of $\bv_1, \dots, \bv_N$, let $\bX \colonequals \Gram(\bv_1, \dots, \bv_N) \in \fE_2^N$, and let $\bM^\star \in \sB(N, r)$ be such that $\bv^\top \bM^\star \bv = N^2$.

    Then, all eigenvectors of $\bM^\star$ with nonzero eigenvalue belong to the subspace
    \begin{equation}
        V_{\sym} \colonequals \left\{ \vec(\bS\bV) : \bS \in \RR^{r \times r}_{\sym} \right\} \subset \RR^{rN}.
    \end{equation}
    Additionally, $\bv$ is an eigenvector of $\bM^\star$ with eigenvalue $N$, and all eigenvectors of $\bM^\star$ with nonzero eigenvalue that are orthogonal to $\bv$ belong to the subspace
    \begin{equation}
        V_{\sym}^\prime \colonequals \left\{ \vec(\bS\bV) : \bS \in \RR^{r \times r}_{\sym}, \bv_i^\top \bS \bv_i = 0 \text{ for } i \in [N] \right\} \subset \RR^{rN}.
    \end{equation}
\end{lemma}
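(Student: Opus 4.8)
The plan is to read off everything from the equality case of one elementary norm inequality, together with the rigidity that Theorem~\ref{thm:sos4} already supplies; in particular I would avoid an explicit dual SDP computation. For Step~1 I would factor $\bM^\star = \bU^\top \bU$ with $\bU = [\,\bU_1 \mid \cdots \mid \bU_N\,]$, $\bU_i \in \RR^{m \times r}$ (and write $\bz_i \in \RR^r$ for the $i$-th block of a vector $\bz \in \RR^{rN}$). The constraint $\bM^\star_{[ii]} = \bm I_r$ makes each $\bU_i$ an isometry, so $\|\bU_i \bv_i\|_2 = 1$ and
\[
  N^2 = \bv^\top \bM^\star \bv = \left\| \sum_{i=1}^N \bU_i \bv_i \right\|_2^2 \leq \left( \sum_{i=1}^N \|\bU_i \bv_i\|_2 \right)^{\!2} = N^2 .
\]
Equality in the triangle inequality forces $\bU_1 \bv_1 = \cdots = \bU_N \bv_N =: \bm p$, whence $\bM^\star_{[ij]} \bv_j = \bU_i^\top \bU_j \bv_j = \bU_i^\top \bm p = \bU_i^\top \bU_i \bv_i = \bv_i$ for all $i,j$. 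Summing over $j$ gives $(\bM^\star \bv)_i = N \bv_i$, i.e.\ $\bv$ is an eigenvector of $\bM^\star$ with eigenvalue $N$, which is the second assertion. (These same identities $\bM^\star_{[ij]}\bv_j = \bv_i$ can also be obtained from the dual of the semidefinite program maximizing $\la \bv\bv^\top, \cdot \ra$ over $\sB(N,r)$ by complementary slackness, the route suggested by the statement; the factorization is just shorter.)

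For Step~2 I would invoke Theorem~\ref{thm:sos4}: since $\sum_i \|\bv_i\|_2^2 = N$ and $\bv^\top \bM^\star \bv = N^2$, the matrix $\bY$ with $Y_{(ij)(k\ell)} = \bv_i^\top \bM^\star_{[jk]} \bv_\ell$ is a degree $4$ pseudomoment matrix, so by Proposition~\ref{prop:deg-4-pm-def} the array $f(i,j,k,\ell) \colonequals \bv_i^\top \bM^\star_{[jk]} \bv_\ell$ is permutation-invariant in $i,j,k,\ell$. The key consequence is that for fixed $k,\ell$ any linear relation $\sum_i a_i \bv_i = 0$ yields $0 = \sum_i a_i f(i,j,k,\ell) = \bv_j^\top\big(\sum_i a_i \bM^\star_{[ik]}\bv_\ell\big)$ for all $j$ (using the symmetry to move the relation onto the second index slot), so $\sum_i a_i \bM^\star_{[ik]}\bv_\ell = 0$ since the $\bv_j$ span $\RR^r$. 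Thus $\bv_i \mapsto \bM^\star_{[ik]}\bv_\ell$ descends to a linear map $\RR^r \to \RR^r$, with matrix $\bS_{k\ell}$ say; since $\bv_a^\top \bS_{k\ell}\bv_b = f(a,b,k,\ell)$ is symmetric in $a,b$, each $\bS_{k\ell}$ is symmetric. Expanding $\bu = \sum_\ell c_\ell \bv_\ell$ then produces, for every $k$ and $\bu \in \RR^r$, a symmetric matrix $\bS_{k,\bu} := \sum_\ell c_\ell \bS_{k\ell}$ with $\bM^\star_{[ik]}\bu = \bS_{k,\bu}\bv_i$ for all $i$ (well defined, as a symmetric matrix is determined by its action on the spanning set $\{\bv_i\}$).

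Step~3 assembles this. For any $\bx = \bM^\star \by \in \img(\bM^\star)$ we get $\bx_i = \sum_k \bM^\star_{[ik]}\by_k = \big(\sum_k \bS_{k,\by_k}\big)\bv_i = \bS\bv_i$ with $\bS := \sum_k \bS_{k,\by_k}$ symmetric, i.e.\ $\bx = \vec(\bS\bV) \in V_{\sym}$; since the eigenvectors of $\bM^\star$ with nonzero eigenvalue span $\img(\bM^\star)$, the first assertion follows. For the last assertion, using that $\bM^\star_{[ij]}$ is symmetric and $\bM^\star_{[ij]}\bv_i = \bv_j$, the same $\bx = \bM^\star\by$ satisfies $\bv_i^\top \bx_i = \sum_j (\bM^\star_{[ij]}\bv_i)^\top \by_j = \sum_j \bv_j^\top\by_j = \la \bv, \by\ra$, independent of $i$, so $\la \bx, \bv\ra = N\la \bv, \by\ra$. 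Hence $\bx \perp \bv$ forces $\la \bv, \by\ra = 0$, and therefore $\bv_i^\top \bS\bv_i = \bv_i^\top \bx_i = 0$ for all $i$, i.e.\ $\bx \in V_{\sym}^\prime$; an eigenvector of $\bM^\star$ with nonzero eigenvalue orthogonal to $\bv$ lies in $\img(\bM^\star)\cap\bv^\perp$, hence in $V_{\sym}^\prime$.

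I expect the crux to be Step~2. The optimality condition from Step~1 pins down the blocks $\bM^\star_{[ij]}$ only on $\mathrm{span}\{\bv_i,\bv_j\}$, so on its own it is far too weak to force a \emph{single} symmetric matrix $\bS$ to represent a given column of $\bM^\star$; the additional ingredient that makes the argument go through is the permutation symmetry $\bv_i^\top\bM^\star_{[jk]}\bv_\ell = \bv_j^\top\bM^\star_{[ik]}\bv_\ell$, which is available precisely because $\bY$ is a genuine degree $4$ pseudomoment matrix (Theorem~\ref{thm:sos4}) and is exactly what lets the maps $\bv_i \mapsto \bM^\star_{[ik]}\bv_\ell$ be realized as linear maps of $\RR^r$.
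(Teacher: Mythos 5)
Your proof is correct, but it follows a genuinely different route than the paper's, so a comparison is worthwhile.

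The paper establishes the lemma by setting up the SDP $\GramSDP$ of \eqref{eq:gram-sdp-primal}, exhibiting an explicit dual-feasible matrix
\[
\bD^\star = \bv\bv^\top - (\bv\bv^\top)^{\ptop} + \bm I_N \otimes (\bV\bV^\top)
\]
with $\Tr(\bD^\star)=N^2$, and invoking complementary slackness: $\bM^\star(\bD^\star-\bv\bv^\top)=\bm 0$, so the range of $\bM^\star$ sits in $\ker(\bm I_N\otimes(\bV\bV^\top)-(\bv\bv^\top)^{\ptop})$, which is then identified with $V_{\sym}$ via the Schmidt decomposition and the diagonalization of the partial transpose of a rank-one matrix (Propositions~\ref{prop:schmidt-decomp}--\ref{prop:v-sym}). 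Your argument avoids SDP duality and the partial transpose altogether. You instead use the \emph{converse} direction of Theorem~\ref{thm:sos4}---that $Y_{(ij)(k\ell)}=\bv_i^\top\bM^\star_{[jk]}\bv_\ell$ defines a degree-$4$ pseudomoment matrix---to import the permutation symmetry $\bv_i^\top\bM^\star_{[jk]}\bv_\ell = \bv_j^\top\bM^\star_{[ik]}\bv_\ell$, and then build the symmetric matrix $\bS$ with $\bx_{[i]}=\bS\bv_i$ directly by hand from that symmetry. For the eigenvalue-$N$ claim both arguments exploit the same rigidity (equality in a norm inequality), the paper phrasing it via Proposition~\ref{prop:blocksym-structure} and you phrasing it via the triangle inequality after factoring $\bM^\star=\bU^\top\bU$; and for the $V_{\sym}^\prime$ refinement the paper uses the psd block constraint $\bM^\star_{[ii]}=\bm I_r\succeq(\bv\bv^\top+\lambda\bw\bw^\top)_{[ii]}$ while you use the relations $\bM^\star_{[ij]}\bv_j=\bv_i$---slightly different in detail but both immediate once the first part is in hand.

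The trade-off: the paper's route is self-contained modulo the forward direction of Theorem~\ref{thm:sos4} (it re-derives the witness constraint from duality, rather than assuming the harder converse), and it produces as a by-product an explicit orthonormal basis of $V_{\sym}$ and the formula for $\bP_{V_{\sym}}$, which are needed later in the ETF calculations of Section~\ref{sec:etfs} and Appendix~\ref{app:projectors}. Your route is shorter once Theorem~\ref{thm:sos4} is available as a black box, makes the role of the pseudomoment permutation symmetry more transparent, and avoids the partial-transpose machinery entirely. One small caveat worth making explicit in your Step~2 write-up: well-definedness of $\bS_{k,\bu}$ should be checked both for the map $\bv_i\mapsto\bM^\star_{[ik]}\bv_\ell$ (which you do, via the relation $\sum_i a_i\bM^\star_{[ik]}\bv_\ell=\bm 0$ when $\sum_i a_i\bv_i=\bm 0$) \emph{and} in the $\bu$-variable (if $\sum_\ell c_\ell\bv_\ell=\bm 0$ then $\sum_\ell c_\ell\bS_{k\ell}=\bm 0$ since both sides agree on the spanning set $\{\bv_i\}$); your parenthetical gestures at this but it deserves a full sentence.
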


We next apply \eqref{eq:M-Y-equiv-mx}, which shows how a spectral decomposition of $\bM^\star$ gives an expression for the associated pseudomoment matrix $\bY$ as a sum of $\rank(\bM^\star)$ (not necessarily orthogonal) rank one matrices, which are constrained by Lemma~\ref{lem:subspace-constraint}.
It turns out that these latter constraints are similar to those appearing in results of \cite{li:94,laurent:96} connecting the smallest face of $\fE_2^N$ containing $\bX$ to $\mathsf{span}(\{\bv_i\bv_i^\top\}_{i = 1}^N)$, which lets us describe the constraints on $\bY$ concisely in terms of the local geometry of $\fE_2^N$ near $\bX$.

\begin{theorem}
    \label{thm:sos-pataki}
    Suppose $\bX \in \fE_4^N$ and $\bY$ is a degree 4 pseudomoment matrix extending $\bX$.
    Then, $\bY \succeq \vec(\bX)\vec(\bX)^\top$, and all eigenvectors of $\bY - \vec(\bX)\vec(\bX)^\top$ with nonzero eigenvalue belong to the subspace $\vec(\mathsf{pert}_{\fE_2^N}(\bX))$.

    Consequently,
    \begin{align}
      \rank(\bY)
      &\leq \dim\left(\mathsf{pert}_{\fE_2^N}(\bX)\right) + 1 \label{eq:sos-pataki-face} \\
      &= \frac{\rank(\bX)(\rank(\bX) + 1)}{2} - \rank(\bX^{\odot 2}) + 1          \label{eq:sos-pataki} \\
      &\leq \frac{\rank(\bX)(\rank(\bX) + 1)}{2}. \label{eq:sos-pataki-weak}
    \end{align}
    In particular, if $\bX$ is an extreme point of $\fE_2^N$ and is extendable to a degree 4 pseudomoment matrix $\bY$, then $\rank(\bY) = \rank(\bX) = 1$, and $\bX = \bx\bx^\top$ and $\bY = (\bx \otimes \bx)(\bx \otimes \bx)^\top$ for some $\bx \in \{\pm 1\}^N$.
\end{theorem}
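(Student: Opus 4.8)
The strategy is to diagonalize the witness matrix $\bM$ supplied by Theorem~\ref{thm:sos4}, use Lemma~\ref{lem:subspace-constraint} to locate its eigenvectors, and then translate the resulting description of $\bY$ into the facial geometry of $\fE_2^N$. First I would reduce to a spanning frame: set $r \colonequals \rank(\bX) \geq 1$ (it is $\geq 1$ since $X_{ii} = 1$) and fix \emph{some} Gram decomposition $\bX = \Gram(\bv_1, \dots, \bv_N)$ with $\bv_1, \dots, \bv_N \in \SS^{r-1}$ spanning $\RR^r$, forming $\bV$ and $\bv = \vec(\bV)$ as in Theorem~\ref{thm:sos4}. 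Since $\bY$ is a degree $4$ pseudomoment matrix extending $\bX \in \fE_4^N$, the second part of Theorem~\ref{thm:sos4} furnishes $\bM \in \sB(N, r)$ with $\bv^\top \bM \bv = N^2$ and $\bY = (\bm I_N \otimes \bV)^\top \bM (\bm I_N \otimes \bV)$.

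Now diagonalize. By Lemma~\ref{lem:subspace-constraint}, $\bv$ is an eigenvector of $\bM$ of eigenvalue $N$, and every eigenvector of $\bM$ with nonzero eigenvalue orthogonal to $\bv$ lies in $V_{\sym}^\prime$; choose an orthonormal basis of eigenvectors for the nonzero eigenvalues of $\bM$ of the form $\bu_1 = \bv/\sqrt{N}$ (note $\|\bv\|_2^2 = \sum_i \|\bv_i\|_2^2 = N$), $\bu_2, \dots, \bu_m$ with $\bu_2, \dots, \bu_m \perp \bv$, having eigenvalues $\lambda_1 = N, \lambda_2, \dots, \lambda_m > 0$, so that $\bY = \sum_{\mu = 1}^m \lambda_\mu \bw_\mu \bw_\mu^\top$ with $\bw_\mu \colonequals (\bm I_N \otimes \bV)^\top \bu_\mu$. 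The key identity, under the conventions of Theorem~\ref{thm:sos4}, is $(\bm I_N \otimes \bV)^\top \vec(\bS\bV) = \vec(\bV^\top \bS \bV)$ for all $\bS$; taking $\bS = \bm I_r$ gives $\bw_1 = \vec(\bX)/\sqrt{N}$, so the $\mu = 1$ term equals exactly $\vec(\bX)\vec(\bX)^\top$ and $\bY - \vec(\bX)\vec(\bX)^\top = \sum_{\mu \geq 2} \lambda_\mu \bw_\mu \bw_\mu^\top \succeq 0$. For $\mu \geq 2$, Lemma~\ref{lem:subspace-constraint} writes $\bu_\mu = \vec(\bS_\mu \bV)$ with $\bS_\mu \in \RR^{r \times r}_{\sym}$ and $\bv_i^\top \bS_\mu \bv_i = 0$ for all $i$, hence $\bw_\mu = \vec(\bV^\top \bS_\mu \bV)$; since $\bX = \bV^\top \bV$, the matrix $\bV^\top \bS_\mu \bV$ is symmetric with zero diagonal and $\bX \pm t\, \bV^\top \bS_\mu \bV = \bV^\top(\bm I_r \pm t\bS_\mu)\bV \in \fE_2^N$ for small $t > 0$, i.e.\ $\bV^\top \bS_\mu \bV \in \mathsf{pert}_{\fE_2^N}(\bX)$. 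Therefore the range of $\bY - \vec(\bX)\vec(\bX)^\top$ is contained in $\vec(\mathsf{pert}_{\fE_2^N}(\bX))$, which is the first assertion.

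The bound \eqref{eq:sos-pataki-face} is immediate from $\rank(\bY) \leq \rank(\vec(\bX)\vec(\bX)^\top) + \rank(\bY - \vec(\bX)\vec(\bX)^\top) \leq 1 + \dim \mathsf{pert}_{\fE_2^N}(\bX)$. For the equality \eqref{eq:sos-pataki} I would invoke the description of the faces of $\fE_2^N$ from \cite{li:94,laurent:96}: with the $\bv_i$ spanning $\RR^r$ one has $\mathsf{pert}_{\fE_2^N}(\bX) = \{\bV^\top \bS \bV : \bS \in \RR^{r \times r}_{\sym},\, \bv_i^\top \bS \bv_i = 0\ \forall i\}$, and since $\bV$ has full row rank the map $\bS \mapsto \bV^\top \bS \bV$ is injective on $\RR^{r\times r}_{\sym}$, so the dimension is $\binom{r+1}{2}$ minus the rank of $\bS \mapsto (\la \bv_i \bv_i^\top, \bS \ra)_{i \in [N]}$, which equals $\dim \mathsf{span}\{\bv_i \bv_i^\top\}_{i=1}^N = \rank\big((\la \bv_i, \bv_j\ra^2)_{ij}\big) = \rank(\bX^{\odot 2})$; plugging $r = \rank(\bX)$ gives \eqref{eq:sos-pataki}, and \eqref{eq:sos-pataki-weak} follows since $\bX^{\odot 2} \neq \bm 0$. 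Finally, if $\bX$ is an extreme point of $\fE_2^N$ then $\dim \mathsf{pert}_{\fE_2^N}(\bX) = \dim \mathsf{face}_{\fE_2^N}(\bX) = 0$ by the proposition relating $\mathsf{pert}$ and $\mathsf{face}$, so \eqref{eq:sos-pataki-face} forces $\rank(\bY) = 1$ (it is nonzero as $Y_{(ii)(ii)} = 1$); then $\bY = \bw\bw^\top \succeq \vec(\bX)\vec(\bX)^\top$ forces $\bw \parallel \vec(\bX)$, and comparing $(ii)(ii)$ entries pins down $\bY = \vec(\bX)\vec(\bX)^\top$. As $\bY$ extends $\bX$, the principal submatrix of $\bY$ on the indices $\{(1i) : i \in [N]\}$ is $\bX$, so $\rank(\bX) \leq \rank(\bY) = 1$; hence $\bX = \bx\bx^\top$ with $x_i^2 = X_{ii} = 1$, giving $\bx \in \{\pm 1\}^N$ and $\bY = (\bx \otimes \bx)(\bx \otimes \bx)^\top$.

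I expect the main obstacle to be essentially notational: verifying that the $\vec$/Kronecker conventions of Theorem~\ref{thm:sos4} make the identities $(\bm I_N \otimes \bV)^\top \bv = \vec(\bX)$ and $(\bm I_N \otimes \bV)^\top \vec(\bS\bV) = \vec(\bV^\top \bS \bV)$ hold verbatim, and then matching the eigenvector constraints of Lemma~\ref{lem:subspace-constraint} to the standard elliptope face description needed for \eqref{eq:sos-pataki}. Once those conventions are fixed, every step is a routine consequence of Lemma~\ref{lem:subspace-constraint} and the convex geometry recalled in the preliminaries.
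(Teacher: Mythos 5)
Your proposal is correct and takes essentially the same approach as the paper: write $r = \rank(\bX)$, take a spanning Gram decomposition, invoke Theorem~\ref{thm:sos4} to get $\bM \in \sB(N,r)$, spectrally decompose $\bM$ around the eigenvector $\bv$, apply Lemma~\ref{lem:subspace-constraint} to place the remaining eigenvectors in $V_{\sym}^\prime$, conjugate by $\bm I_N \otimes \bV$ to obtain the decomposition of $\bY$, and finish with the Li--Tam description of $\mathsf{pert}_{\fE_2^N}(\bX)$ and the identity $\Gram(\bv_1\bv_1^\top,\dots,\bv_N\bv_N^\top) = \bX^{\odot 2}$. The only cosmetic differences are that you prove the inclusion $\{\bV^\top\bS\bV : \bS \in \RR^{r\times r}_{\sym},\ \bv_i^\top\bS\bv_i=0\} \subseteq \mathsf{pert}_{\fE_2^N}(\bX)$ directly via the small-perturbation argument rather than citing Li--Tam for it (though you still need the cited theorem for the reverse inclusion used in \eqref{eq:sos-pataki}), and that you work with the Kronecker identity $(\bm I_N \otimes \bV)^\top \vec(\bS\bV) = \vec(\bV^\top\bS\bV)$ at the matrix level rather than entrywise as the paper does; both choices are equivalent.
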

\begin{remark}
    The matrix $\bY - \vec(\bX)\vec(\bX)^\top$ is quite natural in the pseudomoment framework: entry $(ij)(k\ell)$ of this matrix contains the difference between the pseudoexpectation of $x_ix_jx_kx_\ell$ and the product of the pseudoexpectations of $x_ix_j$ and $x_kx_\ell$.
    It is natural to think of this quantity as the \emph{pseudocovariance} of $x_ix_j$ and $x_kx_\ell$, and it is then not surprising that the SOS constraints imply that the pseudocovariance matrix is psd.
    We are not aware, however, of previous results on SOS optimization that make direct use of the pseudocovariance matrix.
\end{remark}
\noindent
(As we will discuss in Section~\ref{sec:witness-constraints}, the equality \eqref{eq:sos-pataki} is a previously known result of \cite{li:94,laurent:96}.
Recall also that $\mathsf{pert}_{\fE_2^N}(\bX)$ as a subspace has the same dimension as $\mathsf{face}_{\fE_2^N}(\bX)$ as a convex set.)
The final claim gives a strong, albeit non-quantitative, suggestion that $\fE_4^N$ is a substantially tighter relaxation of $\fC^N$ than $\fE_2^N$: it implies that no ``spurious'' extreme points of $\fE_2^N$ that are not already extreme points of $\fC^N$ persist after constraining to $\fE_4^N$.

The bounds \eqref{eq:sos-pataki} and \eqref{eq:sos-pataki-weak} are similar in form to the Pataki bound on the rank of extreme points of feasible sets of general SDPs \cite{pataki:98}.
Because of the very large number of linear constraints in SDPs arising from SOS optimization, however, the Pataki bound is less effective in this setting.
In particular, the Pataki bound gives, at best,
\begin{equation}
    \rank(\bY) \leq (1 + o_{N \to \infty}(1))\sqrt{2m}
    \label{eq:pataki}
\end{equation}
for $\bY$ an extreme point of the set of degree 4 pseudomoment matrices, where $m$ is the number of linear constraints in the definition of an $N^2 \times N^2$ degree 4  pseudomoment matrix.
The degree 4 SOS constraints require that for each subset $\{i, j, k, \ell\} \subseteq [N]$, the permutation invariance of $Y_{(ij)(k\ell)}$ be enforced.
There are $\binom{N}{4} \sim \frac{1}{24}N^4$ such subsets and 24 ``copies'' whose equality must be enforced for each, of which it suffices to consider 12 since the constraint that the matrix $\bY$ be symmetric is not counted.
Thus $m = (1 - o_{N \to \infty}(1))\frac{11}{24}N^4$, whereby the right-hand side of \eqref{eq:pataki} is $(1 - o_{N \to \infty}(1))\sqrt{\frac{11}{12}}N^2$.

On the other hand, even with the weaker of our bounds \eqref{eq:sos-pataki-weak} and the naive further bound $\rank(\bX) \leq N$, we obtain the stronger claim that \emph{any} degree 4 pseudomoment matrix, not necessarily an extreme point, has rank at most $\binom{N + 1}{2} \sim \frac{1}{2}N^2$.
Additionally, the stronger inequality \eqref{eq:sos-pataki} is tight, achieved for instance by the degree 4 pseudomoment matrix $\bY$ where $Y_{(ij)(k\ell)} = 1$ if each index $i, j, k, \ell$ appears an even number of times and $Y_{(ij)(k\ell)} = 0$ otherwise (this matrix is the average of $(\bx \otimes \bx)(\bx \otimes \bx)^\top$ over all $\bx \in \{ \pm 1\}^N$), which extends $\bX = \bm I_N$.
Our bound would also give improved control of $\rank(\bY)$ in the case where $\bX$ is low-rank (say, when $\rank(\bX) \leq \delta N$ for small enough $\delta$).
The question of whether, in fact, the degree 2 pseudomoments of an optimal degree 4 pseudomoment matrix are typically low-rank in practical or random problems appears not to have been studied extensively and would be an interesting question for future work.
The result \eqref{eq:sos-pataki-face} casts some doubt on this natural conjecture, as it implies that there is less ``room'' to build a degree 4 pseudomoment matrix
extending a degree 2 pseudomoment matrix that lies on a low-dimensional face of $\fE_2^N$.

\subsection{Examples from Equiangular Tight Frames}

We next analyze the highly structured case of ETF Gram matrices, where the constraints of the previous section may guide the search for a degree 4 pseudomoment matrix extending a given degree 2 pseudomoment matrix.
The main reason that it is convenient to work with ETFs is that, when $\bX$ is the Gram matrix of an ETF, then $|X_{ij}|$ takes only two values, 1 when $i = j$ and some $\alpha \in [0, 1]$ otherwise.
Therefore, in particular, the matrix $\bX^{\odot 2}$ is very simple,
\begin{equation}
    \bX^{\odot 2} = (1 - \alpha^2)\bm I_N + \alpha^2 \one_N\one_N^\top.
\end{equation}
As we have seen in Theorem~\ref{thm:sos-pataki}, $\bX^{\odot 2}$ is intimately related to $\mathsf{pert}_{\fE_2^N}(\bX)$ and therefore to the possible degree 4 pseudomoment extensions of $\bX$.
In the case of ETFs, its simple structure makes it possible to compute an explicit (albeit naive) guess for a degree 4 pseudomoment extension, which rather surprisingly turns out to be correct.

By such reasoning, we obtain a complete characterization of membership in $\fE_4^N$ for ETF Gram matrices $\bX$, which is quite simple in that it depends only on the dimension and rank of $\bX$.
This result is as follows.

\begin{theorem}
    \label{thm:etf}
    Let $\bv_1, \dots, \bv_N \in \RR^r$ form an ETF, and let $\bX \colonequals \Gram(\bv_1, \dots, \bv_N)$.
    Then, $\bX \in \fE_4^N$ if and only if $N < \frac{r(r + 1)}{2}$ or $r = 1$.
    If $r = 1$, then $\bX = \bx\bx^\top$ for $\bx \in \{ \pm 1\}^N$, and a degree 4 pseudomoment matrix $\bY$ extending $\bX$ is given by $\bY = (\bx \otimes \bx)(\bx \otimes \bx)^\top$.
    If $r > 1$ and $N < \frac{r(r + 1)}{2}$, then, letting $\bP_{\vec(\mathsf{pert}_{\fE_2^N}(\bX))}$ be the orthogonal projection matrix to $\vec(\mathsf{pert}_{\fE_2^N}(\bX)) \subset \RR^{N^2}$, a degree 4 pseudomoment matrix $\bY$ extending $\bX$ is given by
    \begin{align}
        \bY &= \vec(\bX)\vec(\bX)^\top + \frac{N^2(1 - \frac{1}{r})}{\frac{r(r + 1)}{2} - N}\bP_{\vec(\mathsf{pert}_{\fE_2^N}(\bX))}, \text{ i.e.} \label{eq:etf-pm-mx} \\
        Y_{(ij)(k\ell)} &= \frac{\frac{r(r - 1)}{2}}{\frac{r(r + 1)}{2} - N}(X_{ij}X_{k\ell} + X_{ik}X_{j\ell} + X_{i\ell}X_{jk}) - \frac{r^2\left(1 - \frac{1}{N}\right)}{\frac{r(r + 1)}{2} - N}\sum_{m = 1}^N X_{im}X_{jm}X_{km}X_{\ell m}.
                          \label{eq:etf-pm-values}
    \end{align}
\end{theorem}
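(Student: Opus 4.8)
\emph{The plan.} I would treat the two directions separately, disposing of the case $r = 1$ immediately: there the unit vectors $\bv_i$ lie in $\RR$, so $\bv_i = \pm 1$, $\bX = \bx\bx^\top$ for some $\bx \in \{\pm 1\}^N$, and $\bY = (\bx\otimes\bx)(\bx\otimes\bx)^\top$ --- the degree 4 moment matrix of the point mass at $\bx$ --- visibly satisfies Proposition~\ref{prop:deg-4-pm-def}, so $\bX \in \fE_4^N$. Assume henceforth $r > 1$; then the $\bv_i$ span $\RR^r$ (since $\sum_i\bv_i\bv_i^\top = \frac Nr\bm I_r$), so $\rank(\bX) = r$, and by the Welch bound the coherence satisfies $\alpha = \sqrt{(N-r)/(r(N-1))} < 1$, whence $\bX^{\odot 2} = (1 - \alpha^2)\bm I_N + \alpha^2\one_N\one_N^\top$ has full rank $N$. \emph{Necessity} is then short: if $\bX$ had a degree 4 pseudomoment extension $\bY$, then $\rank(\bY) \geq 1$ (because $Y_{(ii)(ii)} = 1$), while \eqref{eq:sos-pataki} gives $\rank(\bY) \leq \frac{r(r+1)}{2} - N + 1$, forcing $N \leq \frac{r(r+1)}{2}$; and if equality held then $\dim\mathsf{pert}_{\fE_2^N}(\bX) = \frac{r(r+1)}{2} - N = 0$, so $\bX$ would be an extreme point of $\fE_2^N$, which by the final clause of Theorem~\ref{thm:sos-pataki} forces $\rank(\bX) = 1$, a contradiction. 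Hence $N < \frac{r(r+1)}{2}$.

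\emph{Sufficiency.} With $r > 1$ and $N < \frac{r(r+1)}{2}$, I would show that the matrix $\bY$ defined by \eqref{eq:etf-pm-mx} is a degree 4 pseudomoment matrix extending $\bX$. Condition 1 is free from the form \eqref{eq:etf-pm-mx}: $\vec(\bX)\vec(\bX)^\top$ and $\bP_{\vec(\mathsf{pert}_{\fE_2^N}(\bX))}$ are psd, and the coefficient $N^2(1 - \frac1r)/(\frac{r(r+1)}{2} - N)$ is strictly positive exactly because $N < \frac{r(r+1)}{2}$. For the rest I would first compute the projection explicitly. Since $\fE_2^N = \{\bZ \succeq 0 : \diag(\bZ) = \one\}$, a short perturbation argument gives $\mathsf{pert}_{\fE_2^N}(\bX) = \{\bV^\top\bS\bV : \bS \in \RR^{r\times r}_{\sym},\ \bv_i^\top\bS\bv_i = 0 \text{ for all } i\}$ (the psd constraint confines perturbations to have row space inside that of $\bV$, the diagonal constraint kills their diagonal). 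Because $\bV\bV^\top = \frac Nr\bm I_r$, the map $\bS \mapsto \bV^\top\bS\bV$ satisfies $\la\bV^\top\bS_1\bV,\bV^\top\bS_2\bV\ra = \frac{N^2}{r^2}\la\bS_1,\bS_2\ra$, so it is $\frac Nr$ times an isometry onto its image; hence computing $\bP_{\vec(\mathsf{pert}_{\fE_2^N}(\bX))}$ reduces to computing, inside $\RR^{r\times r}_{\sym}$, the orthogonal projection onto $\mathsf{span}(\{\bv_i\bv_i^\top\})^\perp$, and that projection is given explicitly by the Gram inverse $(\bX^{\odot 2})^{-1}$, which for an ETF is a rank-one perturbation of a multiple of $\bm I_N$. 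Substituting the resulting closed form into \eqref{eq:etf-pm-mx} and expanding entrywise yields exactly \eqref{eq:etf-pm-values} (with the displayed constants dropping out of the algebra), making Condition 4 --- invariance of $Y_{(ij)(k\ell)}$ under permutations of $i,j,k,\ell$ --- manifest. Finally, Conditions 2 and 3 and the extension property all reduce, using Condition 4, to the single identity $Y_{(ij)(kk)} = X_{ij}$, which I would verify from \eqref{eq:etf-pm-values} by a direct computation that uses $X_{ij}^2 = \alpha^2$ for $i \neq j$, the tight-frame identity $\bX^2 = \frac Nr\bX$, and the Welch value $\alpha^2 = (N-r)/(r(N-1))$: under these substitutions the $k$-dependent terms in $Y_{(ij)(kk)}$ cancel and the surviving coefficient of $X_{ij}$ collapses to $1$.

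\emph{The main obstacle} is twofold. First, one must \emph{guess} the ansatz \eqref{eq:etf-pm-mx}: Theorem~\ref{thm:sos-pataki} and Lemma~\ref{lem:subspace-constraint} only tell us that $\bY - \vec(\bX)\vec(\bX)^\top$ must be supported on $\vec(\mathsf{pert}_{\fE_2^N}(\bX))$, not that it should be proportional to the projection onto that subspace; one is led to this by the exceptional simplicity of $\bX^{\odot 2}$ for ETFs, and must confirm it after the fact. Second, the verification that a single scalar can simultaneously enforce Conditions 2, 3 and the extension property is a genuine, if mechanical, computation whose cancellations occur only because the coherence sits exactly at the Welch bound; the algebra has to be organized so that $\alpha^2 = (N-r)/(r(N-1))$ is inserted at the right step rather than carried symbolically, or the cancellations will be obscured.
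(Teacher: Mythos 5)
Your proof plan is correct, and it follows the same broad strategy as the paper's (motivating the ansatz via Lemma~\ref{lem:subspace-constraint} and Theorem~\ref{thm:sos-pataki}, then reducing the verification to a projector computation whose tractability rests on the simple form of $\bX^{\odot 2}$ for ETFs). The one genuine organizational difference is where you carry out the final verification. The paper constructs the Gram vector witness
$\bM = \bv\bv^\top + \tfrac{(r-1)N}{\frac{r(r+1)}{2} - N}\,\bP_{V_{\sym}^\prime} \in \RR^{rN\times rN}$,
checks $\bM \in \sB(N,r)$ (chiefly $\bM_{[ii]} = \bm I_r$ and $\bM_{[ij]}^\top = \bM_{[ij]}$) by computing the blocks of $\bP_{V_{\sym}^\prime}$ in Appendix~\ref{app:projectors}, and then invokes the ``Gram vector witness to pseudomoment witness'' direction of Theorem~\ref{thm:sos4} to conclude. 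You instead work directly in the $\bY$ picture, verifying the four conditions of Proposition~\ref{prop:deg-4-pm-def}: psd-ness and permutation invariance are visible from the closed form, and you reduce Conditions 2, 3, and the extension property to the single identity $Y_{(ij)(kk)} = X_{ij}$, which follows from the ETF identities $\bX^2 = \tfrac Nr\bX$, $X_{ij}^2 = \alpha^2$ (for $i\neq j$), and $\alpha^2 = \tfrac{N-r}{r(N-1)}$ --- I checked the cancellations and they do collapse the $k$-dependent terms and leave coefficient $1$ on $X_{ij}$ exactly as you predict. What each route buys: the paper's version is more faithful to the witness framework it develops and lets it reuse Theorem~\ref{thm:sos4} wholesale; your version is more self-contained for the sufficiency direction, since it never needs the ``$\bM \Rightarrow \bY$'' half of Theorem~\ref{thm:sos4}, only the explicit $\bP_{\vec(\mathsf{pert}_{\fE_2^N}(\bX))}$ formula and Proposition~\ref{prop:li-tam}. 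The underlying projector computation via $(\bX^{\odot 2})^{-1}$ is the same in both (yours just gets conjugated by $(\bm I_N\otimes\bV)$ once more), and your necessity argument is equivalent to the paper's --- both route through Proposition~\ref{prop:li-tam} and the extreme-point clause of Theorem~\ref{thm:sos-pataki}, though you slightly redundantly re-derive the Gerzon bound as an intermediate step.
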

\noindent
As we will discuss further in the sequel, it is always the case that $N \leq \frac{r(r + 1)}{2}$ for an ETF (this is the \emph{Gerzon bound} \cite{lemmens:91}), and the \emph{maximal ETFs} with $N = \frac{r(r + 1)}{2}$ are notoriously elusive combinatorial objects; for instance, they are known to exist for only four values of $N$, and the question of their existence is open for infinitely many values of $N$ \cite{fickus:15:tables}.
Our result invokes another regard in which these ETFs are extremal, which was in fact present but perhaps unnoticed in existing results (in particular in an elegant proof of the Gerzon bound that we will present later): maximal ETF Gram matrices are the only ETF Gram matrices that are extreme points of $\fE_2^N$ (thus, by Theorem~\ref{thm:sos-pataki}, these Gram matrices cannot belong to $\fE_4^N$).

In our argument it will become clear that the case of ETFs (those non-maximal ones that do belong to $\fE_4^N$) is perhaps the simplest possible situation for degree 4 pseudomoments over the hypercube: as shown in \eqref{eq:etf-pm-mx}, the degree 4 pseudomoment matrix $\bY$ will have only two distinct positive eigenvalues, and will equal of the sum of the rank one matrix $\vec(\bX)\vec(\bX)^\top$, which contributes the ``naive'' pseudomoment value $X_{ij}X_{k\ell}$, with a constant multiple of the projection matrix onto the subspace $\vec(\mathsf{pert}_{\fE_2^N}(\bX))$, which contributes the remaining ``symmetrization'' term appearing in \eqref{eq:etf-pm-values}.

\subsection{Applications}

\paragraph{New inequalities in $\fE_4^N$.}

There are many results in combinatorial optimization enumerating linear inequalities satisfied by $\fC^N$ (see e.g.\ \cite{deza:09:book}).
The practical purpose of this pursuit is that such linear inequalities may be included in linear programming (LP) relaxations of $\fC^N$, which are typically more efficient than the SDP relaxations we work with here.
The putative convenience of SDP relaxations is that they do not require their user to know specifically which inequalities will be relevant for a given problem; the psd constraint captures many relevant inequalities at once.
For theoretical understanding, however, it is again important to know which specific inequalities over $\fC^N$ are satisfied at which degrees of SOS relaxation, since those inequalities may then be used as analytical tools.

Yet, to the best of our knowledge, very few inequalities over $\fC^N$ are known to be satisfied in $\fE_4^N$ but not $\fE_2^N$; indeed, it appears that the only infinite such family known before this work was the \emph{triangle inequalities},
\begin{equation}
    -s_is_jX_{ij} - s_js_kX_{jk} - s_is_kX_{ik} \leq 1 \text{ for } \bX \in \fE_4^N, \bs \in \{ \pm 1\}^N.
\end{equation}
Guided by the results from the previous section, we find a new family of similar but independent inequalities.
First, from the negative result of Theorem~\ref{thm:etf}, we obtain concrete examples of matrices $\bX \in \fE_2^N \setminus \fE_4^N$, namely the Gram matrices of ETFs with $N = \frac{r(r + 1)}{2}$.
As mentioned before, these are only known to exist for four specific dimensions, namely $r \in \{2, 3, 7, 23\}$.
By convex duality, there must exist certificates that these matrices do not belong to $\fE_4^N$, taking the form of linear inequalities that hold over $\fE_4^N$ but fail to hold for these matrices.
Indeed, for the smallest two examples $r \in \{2, 3\}$, a triangle inequality is a valid certificate of infeasibility.

For $r = 7$, on the other hand, the absolute value of the off-diagonal entries of the Gram matrix is $\alpha = \frac{1}{3}$, so the triangle inequalities are satisfied, and the certificates of infeasibility must be new inequalities which cannot be obtained as linear combinations of triangle inequalities.
We compute these certificates numerically and identify the constants that arise by hand to allow the certificates to be validated by symbolic computation (this amounts to checking that a certain $N^2 \times N^2$ matrix is psd, where in this case $N^2 = 28^2 = 784$).

For $r = 23$ the same appears to occur numerically and a similar argument shows that yet another independent family of inequalities must arise as the certificates of infeasibility, but the symbolic verification of such a certificate is a much larger problem which a naive software implementation does not solve in a reasonable time.
We thus only present the verified result for $r = 7$ here as a proof of concept, leaving both further computational verification of exact inequalities and further theoretical analysis of these certificates to future work.

\begin{theorem}
    \label{thm:max-etf-7-ineqs}
    Let $\bZ$ be the Gram matrix of an ETF of $28$ vectors in $\RR^7$.
    Then, for any $\bX \in \fE_4^N$ and any $\pi: [28] \to [N]$ injective,
    \begin{equation}
        \label{eq:max-etf-7-ineqs}
        \sum_{1 \leq i < j \leq 28} \sgn(Z_{ij})X_{\pi(i)\pi(j)} \leq 112,
    \end{equation}
    and this inequality cannot be obtained as a linear combination of the triangle inequalities
    \begin{equation}
        -s_is_jX_{ij} - s_js_kX_{jk} - s_is_kX_{ik} \leq 1 \text{ for } \bs \in \{ \pm 1 \}^N.
    \end{equation}
\end{theorem}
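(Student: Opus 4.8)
The plan is to turn \eqref{eq:max-etf-7-ineqs} into a bound on a single semidefinite program over $\fE_4^{28}$, prove that bound with an explicit dual certificate, and separately exhibit a point showing the inequality is not implied by the triangle inequalities.

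\textbf{Reduction to $N = 28$, and rewriting the objective.} Since $\pi$ is injective, restricting a degree $4$ pseudomoment matrix $\bY$ extending $\bX \in \fE_4^N$ to the $784$ row/column indices $\{(\pi(i)\,\pi(j)) : i, j \in [28]\}$ yields a positive semidefinite matrix; Conditions 2--4 of \Cref{prop:deg-4-pm-def} are inherited, and (using Conditions 2 and 4) the resulting matrix is a degree $4$ pseudomoment matrix over $\{\pm 1\}^{28}$ extending $\bX^\prime \colonequals (X_{\pi(i)\pi(j)})_{i,j \in [28]}$, so $\bX^\prime \in \fE_4^{28}$ (after relabelling the index set $\pi([28])$ as $[28]$). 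Thus it suffices to prove $\sum_{1 \le i < j \le 28} \sgn(Z_{ij}) X_{ij} \le 112$ for all $\bX \in \fE_4^{28}$. By \Cref{prop:etf-welch-bound} the coherence of $\bZ$ is $\alpha = \sqrt{(28 - 7)/(7 \cdot 27)} = \tfrac13$, so $\sgn(Z_{ij}) = 3 Z_{ij}$ whenever $i \neq j$; since $Z_{ii} = X_{ii} = 1$, the left-hand side equals
\[
  3 \sum_{i < j} Z_{ij} X_{ij} = \tfrac32 \big( \la \bZ, \bX \ra - 28 \big),
\]
so the claim is equivalent to $\max_{\bX \in \fE_4^{28}} \la \bZ, \bX \ra \le \tfrac{308}{3}$. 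This bound lies strictly below $\la \bZ, \bZ \ra = \|\bZ\|_F^2 = 7 \cdot 4^2 = 112$, as it must: $28 = \tfrac{7 \cdot 8}{2}$, so $\bZ \notin \fE_4^{28}$ by \Cref{thm:etf}.

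\textbf{The upper bound via a dual certificate.} By semidefinite (equivalently, sum-of-squares) duality, $\max_{\bX \in \fE_4^{28}} \la \bZ, \bX \ra \le \tfrac{308}{3}$ follows from producing one dual-feasible point of value $\tfrac{308}{3}$: concretely, degree-$\le 2$ multiplier polynomials $\sigma_1, \dots, \sigma_{28}$ such that the residual polynomial $\tfrac{308}{3} - \bx^\top \bZ \bx - \sum_{i = 1}^{28} \sigma_i(\bx)(x_i^2 - 1)$, expanded in the monomials $\{x_k x_\ell\}_{k, \ell \in [28]}$, has a positive semidefinite Gram matrix --- the $784 \times 784$ psd check mentioned in the introduction. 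I would obtain the $\sigma_i$ and this Gram matrix by solving the dual semidefinite program numerically. To make the certificate exact, the key point is symmetry: the stabilizer of $\bZ$ in $S_{28}$ is large ($\bZ$ arising from the highly symmetric $28$-point configuration associated with the Schl\"{a}fli geometry), it acts on the dual program, and by averaging the certificate may be taken invariant under it; this collapses the certificate to a small number of rational parameters that can be recognized by hand and then verified symbolically. I expect this last step --- pinning down the exact rational dual optimum and pushing through the $784 \times 784$ symbolic positive-semidefiniteness verification --- to be the main obstacle; it is precisely what obstructs the analogous argument for the $r = 23$ maximal ETF.

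\textbf{Independence from the triangle inequalities.} Every off-diagonal entry of $\bZ$ has absolute value $\tfrac13$, so for all $\bs \in \{\pm 1\}^{28}$ and distinct $i, j, k$, $-s_i s_j Z_{ij} - s_j s_k Z_{jk} - s_i s_k Z_{ik} \le 3 \cdot \tfrac13 = 1$; hence $\bZ$ satisfies every triangle inequality. On the other hand $\sum_{1 \le i < j \le 28} \sgn(Z_{ij}) Z_{ij} = \sum_{i < j} |Z_{ij}| = \binom{28}{2} \cdot \tfrac13 = 126 > 112$, so $\bZ$ violates \eqref{eq:max-etf-7-ineqs}. Any nonnegative linear combination of triangle inequalities is valid at every matrix that satisfies all of them, in particular at $\bZ$; since \eqref{eq:max-etf-7-ineqs} is not valid at $\bZ$, it cannot be such a combination.
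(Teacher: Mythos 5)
Your overall approach matches the paper's: reduce to $N = 28$, produce a degree-$4$ SOS (dual SDP) certificate of the bound, use the symmetry of the Schl\"{a}fli configuration to pin down rational constants, and verify positive semidefiniteness symbolically; and the independence argument ($\bZ$ satisfies all triangle inequalities because $\alpha = \tfrac13$, but $\sum_{i<j}\sgn(Z_{ij})Z_{ij} = 126 > 112$) is exactly the paper's. Your reduction to $N=28$ via principal minors and your reformulation $\sum_{i<j}\sgn(Z_{ij})X_{ij} = \tfrac32(\la \bZ, \bX\ra - 28)$ are both correct and a bit more explicit than the paper's exposition. The one thing worth flagging: the part you label the ``main obstacle'' is the actual mathematical content of the theorem's proof. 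The paper does produce a concrete certificate, a symmetric $\bA \in \RR^{784 \times 784}$ whose entries are specified by explicit rational constants $\gamma_1 = \tfrac{1}{126}$, $\gamma_2 = \tfrac{1}{36}$, $\kappa_1 = \tfrac29$, $\kappa_2 = \tfrac{1}{28}$, distributed over entry patterns keyed to whether pairs/triples of indices are adjacent in the Schl\"{a}fli graph, and verifies $\bA \succeq \bm 0$ via a symbolic Cholesky decomposition in SageMath, with the identity $\la \bA, \bY\ra = 112 - \sum_{i<j}\sgn(Z_{ij})X_{\pi(i)\pi(j)}$ checked by direct term counting. As a plan your proposal is sound, but without the explicit $\bA$ (or an abstract symmetry argument that bypasses it) it does not yet constitute a proof; the paper itself says only that the constants were ``identified by hand'' from numerical output, so there is at present no conceptual shortcut around the certificate.
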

\noindent
As we will detail in the sequel, there is a general correspondence between ETFs and strongly regular graphs (SRGs) \cite{fickus:15}, under which the ETF of 28 vectors in $\RR^7$ (which is unique up to negating a subset of its vectors) corresponds to the \Schlafli\ graph, a remarkably symmetrical 16-regular graph on 27 vertices (see e.g.\ \cite{cameron:80,chudnovsky:05} for examples of its structure and significance in combinatorics).
We thus refer to these inequalities as \emph{\Schlafli\ inequalities}.

As a point of comparison, since $\bZ \in \fE_2^N$, the half-space parallel to that defined by \eqref{eq:max-etf-7-ineqs} most tightly bounding $\fE_2^N$ must have the right-hand side at least
\begin{equation}
    \sum_{1 \leq i < j \leq 28} \sgn(Z_{ij})Z_{ij} = \sum_{1 \leq i < j \leq 28} |Z_{ij}| = \frac{28(28 - 1)}{2} \cdot \frac{1}{3} = 126 > 112.
\end{equation}
Thus, the \Schlafli\ inequalities describe directions in the vector space of symmetric matrices along which $\fE_4^N$ is strictly ``narrower'' than $\fE_2^N$.

\paragraph{New proofs of complexity of parity inequalities.}

Lastly, our results on ETFs imply a special case of the following theorem of Laurent, which shows a lower bound for what degree of the SOS hierarchy is required to describe the cut polytope exactly.
(In the same work this bound was conjectured to be optimal, which was later proven in \cite{fawzi:16}.)

\begin{proposition}[Theorems 5 and 6 of \cite{laurent:03}]
    \label{prop:laurent}
    Let $N \geq 3$ be odd.
    Define
    \begin{equation}
        \bX^{(N)} \colonequals \left(1 + \frac{1}{N - 1}\right)\bm I_N - \frac{1}{N - 1}\one\one^\top.
    \end{equation}
    Then, $\bX^{(N)} \in \fE_{N - 1}^N \setminus \fC^N$, and the degree $N - 1$ pseudomoment matrix $\bZ^{(N)} \in \RR^{[N]^{(N - 1) / 2} \times [N]^{(N - 1) / 2}}$ whose entries are given by
    \begin{equation}
        Z^{(N)}_{\bs \bt} \colonequals (-1)^{|\odd(\bs \circ \bt)| / 2} \prod_{\substack{i \text{ odd} \\ 1 \leq i < |\odd(\bs \circ \bt)|}} \frac{i}{N - i}
    \end{equation}
    extends $\bX^{(N)}$.
    In particular, $\fE_{N - 1}^N \neq \fC^N$.
\end{proposition}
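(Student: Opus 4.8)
The plan is to check directly that $\bX^{(N)} \notin \fC^N$ and that the explicit matrix $\bZ^{(N)}$ meets every condition of Definition~\ref{def:truncated-pm} with $d = N-1$ and extends $\bX^{(N)}$; the assertion $\fE_{N-1}^N \neq \fC^N$ is then immediate. Abbreviate $m \colonequals (N-1)/2$, and for $S \subseteq [N]$ of even size $2k$ write $f(S) \colonequals (-1)^{k} \prod_{1 \leq i < 2k,\, i \text{ odd}} \frac{i}{N-i}$, so that $Z^{(N)}_{\bs\bt} = f(\odd(\bs \circ \bt))$, noting that $\odd(\bs \circ \bt) = \odd(\bs) \triangle \odd(\bt)$ always has even size when $|\bs| = |\bt| = m$. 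The argument has three routine ingredients and one substantial one.

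For the routine part: first, $\bX^{(N)} \notin \fC^N$. Since $N$ is odd, $\one^\top \bx$ is a nonzero odd integer for every $\bx \in \{\pm 1\}^N$, hence $(\one^\top \bx)^2 \geq 1$, and taking convex combinations gives $\one^\top \bX \one \geq 1$ for all $\bX \in \fC^N$; but a direct computation yields $\one^\top \bX^{(N)} \one = (1 + \frac{1}{N-1})N - \frac{1}{N-1} N^2 = 0$, so $\bX^{(N)} \notin \fC^N$. Second, requirements 2 and 3 of Definition~\ref{def:truncated-pm} are visible from the formula: $Z^{(N)}_{\bs\bt}$ depends on $\bs,\bt$ only through $\odd(\bs \circ \bt)$, and equals $1$ whenever $\odd(\bs \circ \bt) = \emptyset$. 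Third, the degree-$2$ block of $\bZ^{(N)}$ is $\bX^{(N)}$: for $\bs = (1\cdots 1\, i)$ and $\bt = (1\cdots 1\, j)$ of length $m$, the symbol $1$ occurs $N-3$ times in $\bs \circ \bt$ (an even number, as $N$ is odd), together with one copy each of $i$ and $j$, so $\odd(\bs\circ\bt)$ is empty when $i = j$ and equals $\{i,j\}$ when $i \neq j$, giving $Z^{(N)}_{\bs\bt} = 1$ or $-\frac{1}{N-1}$ respectively, which is exactly $X^{(N)}_{ij}$.

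The substantial ingredient is $\bZ^{(N)} \succeq 0$. Since $Z^{(N)}_{\bs\bt} = f(\odd(\bs) \triangle \odd(\bt))$ depends on the strings only through their reduced supports, $\bZ^{(N)} = \bR^\top \bZ' \bR$, where $\bZ'$ has rows and columns indexed by the subsets $S \subseteq [N]$ with $|S| \leq m$ and $|S| \equiv m \pmod 2$, has entries $Z'_{ST} \colonequals f(S \triangle T)$, and $\bR$ is the matrix with $R_{S\bs} = 1$ if $\odd(\bs) = S$ and $0$ otherwise; since every admissible $S$ is the reduced support of some length-$m$ string (pad $S$ with a repeated symbol), $\bR$ has full row rank and $\bR\bR^\top$ is a positive diagonal matrix, so $\bZ^{(N)} \succeq 0$ if and only if $\bZ' \succeq 0$. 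Now $\bZ'$ is invariant under the action of $S_N$ on subsets, so it block-diagonalizes along the isotypic decomposition of $\bigoplus_k \RR^{\binom{[N]}{k}}$ (the sum over $k \leq m$ with $k \equiv m \pmod 2$) into Specht modules for the two-row partitions $(N-\ell, \ell)$: on the $(N-\ell,\ell)$-isotypic component $\bZ'$ acts as a fixed matrix $\bB_\ell$ whose size is the multiplicity of that module, and $\bZ' \succeq 0$ iff each $\bB_\ell \succeq 0$. The entries of $\bB_\ell$ are inner products in the harmonic subspaces of the Johnson association scheme, evaluating up to normalization to values of Hahn (equivalently dual Hahn) polynomials, so the required positivity reduces to a finite list of explicit inequalities in $N$. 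I expect establishing these inequalities — equivalently, extracting enough structure of the $\bB_\ell$ (for instance that each has low rank, which Theorem~\ref{thm:sos-pataki} and putative higher-degree analogues would suggest given $\rank \bX^{(N)} = N-1$) — to be the main obstacle. A natural parallel route is to produce the Gram factorization $Z^{(N)}_{\bs\bt} = \langle \bw_{\odd(\bs)}, \bw_{\odd(\bt)} \rangle$ directly: the closed form $f(2k) \propto (-1)^k \binom{2k}{k} / \big(4^k \,(\frac{N-1}{2})_k\big)$ has the shape of a Gegenbauer/Jacobi expansion coefficient, which suggests taking the $\bw_S$ to be values of a positive-definite radial kernel evaluated at the vertices of the regular simplex that Gram-represents $\bX^{(N)}$.

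Finally, for the degree-$4$ restriction relevant to the present paper (odd $N \geq 5$, the range in which $\fE_4^N$ can differ from $\fC^N$), the PSD ingredient above is unnecessary: $\bX^{(N)}$ has rank $N-1$ and all off-diagonal entries of absolute value $\frac{1}{N-1} = \sqrt{\frac{N-(N-1)}{(N-1)(N-1)}}$, so by Proposition~\ref{prop:etf-welch-bound} it is the Gram matrix of an ETF of $N$ vectors in $\RR^{N-1}$; since $N < \frac{(N-1)N}{2}$, Theorem~\ref{thm:etf} certifies $\bX^{(N)} \in \fE_4^N$ and supplies an explicit degree-$4$ pseudomoment extension (which one checks reproduces the restriction of $\bZ^{(N)}$ to degree $4$). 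Together with the first ingredient this recovers $\fE_4^N \neq \fC^N$ by elementary means, while the full strength of the proposition at degree $N-1$ requires the symmetry-reduction or Gram-factorization argument sketched above.
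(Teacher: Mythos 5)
First, a point of context: the paper does not prove this proposition --- it is imported verbatim as Theorems 5 and 6 of \cite{laurent:03}, and the only thing actually proved in the paper is the degree-4 restriction (Corollary~\ref{cor:laurent}), via the observation that $\bX^{(N)}$ is the Gram matrix of the simplex ETF together with Theorem~\ref{thm:etf}. Your final paragraph reproduces exactly that argument, correctly, and your verification that $\bX^{(N)} \notin \fC^N$, that Conditions 2 and 3 of Definition~\ref{def:truncated-pm} are immediate from the formula, and that the degree-2 block of $\bZ^{(N)}$ equals $\bX^{(N)}$ are all sound. The reduction from $\bZ^{(N)} \succeq 0$ to $\bZ' \succeq 0$ via the full-row-rank matrix $\bR$ is also fine.

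However, as a proof of the full proposition there is a genuine gap, and it sits precisely at the one step that constitutes the content of Laurent's Theorem 6: you never establish that the symmetry-reduced blocks $\bB_\ell$ are positive semidefinite. Reducing $\bZ' \succeq 0$ to ``a finite list of explicit inequalities in $N$'' coming from the isotypic decomposition under $S_N$ is a correct setup, but it is the setup of Laurent's proof, not its conclusion; evaluating the entries of the $\bB_\ell$ in terms of Hahn polynomials and then proving the resulting eigenvalue inequalities is exactly where the association-scheme machinery and the hypergeometric identities enter, and you explicitly defer this (``I expect establishing these inequalities \dots to be the main obstacle''). The alternative Gram-factorization route you mention --- finding vectors $\bw_S$ with $Z^{(N)}_{\bs\bt} = \langle \bw_{\odd(\bs)}, \bw_{\odd(\bt)}\rangle$ from a positive-definite kernel on the simplex --- is an appealing idea (and is in the spirit of what this paper does at degree 4, where the witness is built from $\vec(\bX)\vec(\bX)^\top$ plus a projector), but it is likewise only a heuristic here: no kernel is exhibited and no positivity is verified. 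So the proposal proves $\bX^{(N)} \in \fE_4^N \setminus \fC^N$ (matching the paper's Corollary~\ref{cor:laurent}) but not $\bX^{(N)} \in \fE_{N-1}^N$, which is the assertion that makes the proposition strong. To close the gap you would need either to carry out the Hahn-polynomial eigenvalue computation for every block $\bB_\ell$, $0 \leq \ell \leq (N-1)/2$, or to exhibit the Gram vectors $\bw_S$ explicitly --- the latter being precisely the open direction the paper flags at the end of Section 6.2.
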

The ``parity inequality'' we refer to in the heading of this section is the fact that $\bX^{(N)} \notin \fC^N$, which follows from the fact that when $N$ is odd, then $\one_N^\top\bX\one_N \geq 1$ for all $\bX \in \fC^N$ (which in turn follows from the fact that when $\bX = \bx\bx^\top$ with $\bx \in \{\pm 1\}^N$, then this simply says $(\sum_{i = 1}^N x_i)^2 \geq 1$), while $\one_N^\top \bX^{(N)} \one_N = 0$.
Proposition~\ref{prop:laurent} then says that this parity inequality fails to hold over $\fE_{N - 1}^N$.

Observing that $\bX^{(N)}$ is the Gram matrix of an ETF, we are able to reproduce a weaker version of this result as a corollary of Theorem~\ref{thm:etf}, where $\fE_{N - 1}^N$ is replaced with $\fE_4^N$.

\begin{corollary}
    \label{cor:laurent}
    Let $N \geq 4$.
    Then, the matrix $\bY \in \RR^{N^2 \times N^2}$ defined by
    \begin{equation}
      Y_{(ij)(k\ell)}^{(N)} = \left\{\begin{array}{lcl} 1 & : & |\odd((ijk\ell))| = 0, \\ -\frac{1}{N - 1} & : & |\odd((ijk\ell))| = 2, \\\frac{3}{(N - 1)(N - 3)} & : & |\odd((ijk\ell))| = 4 \end{array}\right.
    \end{equation}
    is a degree 4 pseudomoment matrix extending $\bX^{(N)}$.
\end{corollary}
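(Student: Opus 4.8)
The plan is to recognize $\bX^{(N)}$ as the Gram matrix of an equiangular tight frame and invoke Theorem~\ref{thm:etf}. Writing $\bX^{(N)} = \frac{N}{N-1}\big(\bm I_N - \frac1N \one\one^\top\big)$ exhibits it as $\frac{N}{N-1}$ times the orthogonal projection onto $\one^\perp$; hence it is psd, has unit diagonal, rank $r \colonequals N - 1$, and nonzero eigenvalues all equal to $\frac{N}{N-1} = \frac{N}{r}$, so it is the Gram matrix of a UNTF of $N$ unit vectors in $\RR^{N-1}$. Its off-diagonal entries all equal $-\frac{1}{N-1}$ in absolute value, so this UNTF is equiangular with coherence $\alpha = \frac{1}{N-1}$ (consistent with the Welch bound of Proposition~\ref{prop:etf-welch-bound}, which here gives $\sqrt{\frac{N-r}{r(N-1)}} = \frac{1}{N-1}$, confirming it is an ETF). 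For integer $N$ the inequality $N < \frac{r(r+1)}{2} = \frac{N(N-1)}{2}$ is equivalent to $N \geq 4$, and in that regime $r = N - 1 > 1$; so the hypotheses of the second case of Theorem~\ref{thm:etf} hold, and it produces a degree $4$ pseudomoment matrix $\bY$ extending $\bX^{(N)}$ via~\eqref{eq:etf-pm-values}.

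It then remains to check that this $\bY$ is the matrix $\bY^{(N)}$ claimed in the corollary. With $r = N - 1$ one simplifies the coefficients in~\eqref{eq:etf-pm-values}: $\tfrac{r(r+1)}{2} - N = \tfrac{N(N-3)}{2}$, so the first coefficient equals $\tfrac{(N-1)(N-2)}{N(N-3)}$ and the second equals $\tfrac{2(N-1)^3}{N^2(N-3)}$. Now $\bY$ is, by Theorem~\ref{thm:etf}, a genuine degree $4$ pseudomoment matrix, so by Definition~\ref{def:truncated-pm} its entry $Y_{(ij)(k\ell)}$ depends only on $\odd((ijk\ell))$; moreover $\bX^{(N)}$ is invariant under every coordinate permutation $\sigma \in S_N$, and~\eqref{eq:etf-pm-values} builds $\bY$ from $\bX^{(N)}$ in an $S_N$-equivariant way (each of the products $X_{ij}X_{k\ell}$, $X_{ik}X_{j\ell}$, $X_{i\ell}X_{jk}$ and the sum $\sum_m X_{im}X_{jm}X_{km}X_{\ell m}$ transforms correctly under relabeling), so $Y_{(ij)(k\ell)}$ in fact depends only on $|\odd((ijk\ell))| \in \{0, 2, 4\}$. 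Hence it suffices to evaluate $Y$ at one representative of each case, e.g.\ $(ijk\ell) = (iiii)$, then $(iiik)$ with $i \neq k$, then $(ijk\ell)$ with $i, j, k, \ell$ distinct (all of which exist once $N \geq 4$).

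Each such evaluation is a short computation, made easy by the fact that every off-diagonal entry of $\bX^{(N)}$ equals $\beta \colonequals -\frac{1}{N-1}$: the three-term sum $X_{ij}X_{k\ell} + X_{ik}X_{j\ell} + X_{i\ell}X_{jk}$ becomes a small fixed polynomial in $\beta$, and the four-fold sum $\sum_{m=1}^N X_{im}X_{jm}X_{km}X_{\ell m}$ splits into the few terms with $m \in \{i,j,k,\ell\}$ and the remaining $N - |\{i,j,k,\ell\}|$ values of $m$, each contributing $\beta^4$. Substituting and collecting powers of $N$ — with all denominators cancelling against $N(N-3)$ or $N^2(N-3)$ — yields $Y = 1$ when $|\odd| = 0$, $Y = -\frac{1}{N-1}$ when $|\odd| = 2$, and $Y = \frac{3}{(N-1)(N-3)}$ when $|\odd| = 4$, which is exactly $\bY^{(N)}$. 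The only obstacle here is routine algebraic bookkeeping; in particular there is no positive semidefiniteness to verify by hand, since that is supplied by Theorem~\ref{thm:etf}. This is precisely why leaning on Theorem~\ref{thm:etf} is preferable to verifying the conditions of Proposition~\ref{prop:deg-4-pm-def} directly, where the psd condition would be the genuine difficulty.
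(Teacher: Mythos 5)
Your proposal is correct and follows the same route as the paper's (very terse) proof: identify $\bX^{(N)}$ as the Gram matrix of the simplex ETF of $N$ vectors in $\RR^{N-1}$, observe that $N < \tfrac{r(r+1)}{2}$ with $r = N - 1 > 1$ exactly when $N \geq 4$, invoke Theorem~\ref{thm:etf} to obtain a degree-$4$ pseudomoment matrix via~\eqref{eq:etf-pm-values}, and then evaluate that formula using the fact that all off-diagonal entries of $\bX^{(N)}$ equal $-\tfrac{1}{N-1}$. You simply spell out the bookkeeping that the paper leaves implicit, and your $S_N$-equivariance observation cleanly reduces the verification to a representative of each value of $|\odd((ijk\ell))|$.
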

\noindent
Though this result is weaker than the full Proposition~\ref{prop:laurent}, the technique of its proof is far simpler: we do not rely on any of the theory of association schemes or hypergeometric series used in the argument of \cite{laurent:03}, and we obtain the correct value for the degree 4 pseudomoments, the key result that $\bY \succeq \bm 0$, and explicit descriptions of the eigenvalues and eigenvectors of $\bY$ (via the matrix formula \eqref{eq:etf-pm-mx}) from a single straightforward linear algebra calculation.
We believe it is likely that a generalization of the methods presented here can yield the full result of Laurent's theorem in a similarly simplified fashion.

\section{Notations}

Boldface uppercase letters ($\bX$) denote matrices, and boldface lowercase letters ($\bx$) denote vectors.
Entries of matrices and vectors are written without boldface and with subscripts ($X_{ij}$, $x_i$), except if the vector or matrix itself has a subscript, in which case parentheses are used ($(\bx_i)_j$, $(\bX_i)_{jk}$).

$\bm I_d$ denotes the $d \times d$ identity matrix, $\one_d$ denotes the all-ones vector of length $d$, and $\hat{\one}_d$ denotes its normalization to a unit vector, $\hat{\one}_d = \frac{1}{\sqrt{d}}\one_d$.
The subscripts from these notations will be omitted when the suitable dimension is clear from context.

$\SS^{r - 1} \subset \RR^r$ denotes the sphere of unit radius in $\RR^r$.
$\RR^{k \times k}$ (resp.\ $\RR^{k \times k}_{\sym}$, $\RR^{k \times k}_{\antisym}$) denotes the set of $k \times k$ matrices (resp.\ symmetric, antisymmetric $k \times k$ matrices) with real entries.
$\sO(k)$ denotes the group of $k \times k$ orthogonal matrices.

For a matrix $\bV$, $\row(\bV)$ denotes the span of its rows, $\ker(\bV)$ denotes its kernel as an operator (also the orthogonal complement of $\row(\bV)$), $\mathsf{rank}(\bV)$ denotes its rank, and $\mathsf{null}(\bV)$ denotes its nullity, the dimension of its kernel.
For a vector $\bv \in \RR^k$, $\diag(\bv) \in \RR^{k \times k}$ is the diagonal matrix with $\diag(\bv)_{ii} = v_i$.
For matrices $\bX, \bY$ of the same dimensions, $\bX \odot \bY$ denotes the Hadamard or entrywise product, and $\bX^{\odot k}$ denotes the matrix obtained by taking the $k$th power of each entry of $\bX$.
For any subspace $V \subseteq \RR^r$, $\bP_{V} \in \RR^{r \times r}$ denotes the orthogonal projection matrix to $V$.
For $\bv_1, \dots, \bv_N \in \RR^k$, $\Gram(\bv_1, \dots, \bv_N) \in \RR^{N \times N}$ denotes the Gram matrix $(\la \bv_i, \bv_j \ra)_{i, j = 1}^N$.

We will work extensively with block matrices, for which we propose a non-standard but helpful notation. When $\bX \in \RR^{ab \times cd}$ and we have declared $\bX$ to be a $b \times d$ matrix of blocks of size $a \times c$, then $\bX_{[ij]} \in \RR^{a \times c}$ denotes the block in position $(i, j) \in [b] \times [d]$.
Similarly, when $\bx \in \RR^{ab}$ and we have declared $\bx$ to be divided into blocks of length $a$, then $\bx_{[i]} \in \RR^a$ denotes the block in position $i \in [b]$.
When $c = a$, so that the blocks of $\bX$ are square, we define the \emph{partial transpose} operation $\bX^{\ptop}$ to produce the matrix where every $a \times a$ block of $\bX$ is transposed.\footnote{This notation, standard in the quantum information literature, is justified by the symbol $\ptop$ being ``half of'' the transpose symbol $\top$.}

\section{Gramian Description of $\fE_4^N$: Theorem~\ref{thm:sos4}}
\label{sec:pf:sos4}

In this section we give the proof of Theorem~\ref{thm:sos4}, followed by some ancillary results providing intuitive interpretations of its statement and suggesting some connections to the notion of \emph{separability} as studied in quantum information theory.
Recall that Theorem~\ref{thm:sos4} describes an equivalence between the pseudomoment witness $\bY \in \RR^{N^2 \times N^2}$ extending $\bX \in \fE_2^N$ and the Gram vector witness $\bM \in \sB(N, r)$.
The basic idea is that $\bM$ describes certain rotations by which the blocks of $\bY$ must be related because of the pseudomoment matrix constraints.
We will show first how to build $\bM$ from $\bY$, and then how to build $\bY$ from $\bM$.

\subsection{Proof of Theorem~\ref{thm:sos4}: Pseudomoment Witness to Gram Vector Witness}

Let $\bY \in \RR^{N^2 \times N^2}$ be a degree 4 pseudomoment matrix extending $\bX \in \RR^{N \times N}$, where for some $\bv_1, \dots, \bv_N \in \RR^r$, $\bX = \Gram(\bv_1, \dots, \bv_N)$.
Let $\bV \in \RR^{r \times N}$ have the $\bv_i$ as its columns, and let $\bv = \vec(\bV) \in \RR^{rN}$ be the concatenation of $\bv_1, \dots, \bv_N$.
We will then show that there exists $\bM \in \sB(N, r)$ with $\bv^\top \bM \bv = N^2$ and
\begin{equation}
    \bY = (\bm I_N \otimes \bV)^\top \bM (\bm I_N \otimes \bV).
\end{equation}
We first analyze the special case $r = \rank(\bX)$, then extend to the general case.

\paragraph{Case 1: $r = \rank(\bX)$.}
We build $\bM$ based on a suitable factorization of $\bY$.
Let $r^\prime \colonequals \rank(\bY) \geq r$, then there exists $\bA \in \RR^{r^\prime \times N^2}$ such that $\bY = \bA^\top\bA$.
Let us expand in blocks
\begin{equation}
    \bA = \left[\begin{array}{cccc} \bA_1 & \bA_2 & \cdots & \bA_N \end{array}\right],
\end{equation}
for $\bA_i \in \RR^{r^\prime \times N}$.
Since $\bA_1^\top \bA_1 = \bY_{[11]} = \bX = \bV^\top \bV$, there exists $\bZ \in \RR^{r^\prime \times r}$ such that $\bA_1 = \bZ\bV$ and $\bZ^\top \bZ = \bm I_r$.
By adding extra columns, we may extend $\bZ$ to an orthogonal matrix $\tilde{\bZ} \in \sO(r^\prime)$.
The factorization $\bY = \bA^\top \bA$ is unchanged by multiplying $\bA$ on the left by any element of $\sO(r^\prime)$.
By performing this transformation with $\tilde{\bZ}$, we may assume without loss of generality that $\bA$ is chosen such that
\begin{equation}
    \bA_1 = \left[\begin{array}{c} \bV \\ \bm 0 \end{array}\right] \hspace{-0.15cm}\begin{array}{l} \} \hspace{0.1cm} r \\ \} \hspace{0.1cm} r^\prime - r\end{array}
\end{equation}
where the numbers following the braces show the dimensionality of the matrix blocks.

Now, since $\bA_i^\top \bA_i = \bY_{[ii]} = \bX = \bA_1^\top \bA_1$ for every $i \in [N]$ (since, by the degree 4 pseudomoment conditions, $Y_{(ik)(i\ell)} = Y_{(ii)(k\ell)} = Y_{(11)(k\ell)} = X_{k\ell}$), there must exist $\bm I_{r^\prime} = \bQ_1, \dots, \bQ_N \in \sO(r^\prime)$ such that $\bA_i = \bQ_i\bA_1$.
Let us expand $\bQ_i$ in blocks,
\begin{equation}
    \bQ_i = \big[\underbrace{\bU_i}_{r} \underbrace{\tilde{\bU}_i}_{r^\prime - r} \big].
\end{equation}
We then have
\begin{equation}
    \bA_i = \bQ_i \bA_1 = \bU_i \bV.
\end{equation}
(The extra variable $\tilde{\bU}_i$ will not be used in the argument.)
Therefore, the blocks of $\bY$ are given by
\begin{equation}
    \label{eq:sos4-pf-Y-blocks}
    \bY_{[ij]} = \bA_i^\top \bA_j = \bV^\top \bU_i^\top \bU_j \bV.
\end{equation}
By the permutation symmetry of $\bY$, every such block is symmetric.
Since $\bV$ has full rank, $\bV \bV^\top$ is invertible, and therefore the matrix $(\bV\bV^\top)^{-1}\bV\bY_{[ij]}\bV^\top(\bV\bV^\top)^{-1} = \bU_i^\top \bU_j$ is also symmetric.

We now define $\bM$ blockwise by
\begin{equation}
    \bM_{[ij]} \colonequals \bU_i^\top \bU_j.
\end{equation}
Then $\bM \succeq \bm 0$ by construction, $\bM_{[ii]} = \bm I_r$ since this is the upper left block of $\bQ_i^\top \bQ_i = \bm I_{r^\prime}$, and $\bM_{[ij]}$ is symmetric by the preceding derivation.
Thus, $\bM \in \sB(N, r)$.
By \eqref{eq:sos4-pf-Y-blocks}, we also have
\begin{equation}
    \bY = (\bm I_N \otimes \bV)^\top \bM (\bm I_N \otimes \bV).
\end{equation}
It remains only to check that $\bv^\top \bM \bv = N^2$:
\begin{equation}
  \bv^\top \bM \bv = \sum_{i = 1}^N\sum_{j = 1}^N \bv_i^\top \bM_{[ij]} \bv_j = \sum_{i = 1}^N\sum_{j = 1}^N (\bV^\top \bU_i^\top \bU_j \bV)_{ij} = \sum_{i = 1}^N \sum_{j = 1}^N Y_{(ii)(jj)} = N^2,
\end{equation}
completing the proof of the first case.

\paragraph{Case 2: $r > \rank(\bX)$.}

We will reduce this case to the previous case.
Let $r_0 = \rank(\bX) < r$.
Fix Gram vectors $\bv_1, \dots, \bv_N \in \RR^{r_0}$ such that $\bX = \Gram(\bv_1, \dots, \bv_N)$, and, by the previous argument, choose $\bM \in \sB(N, r_0)$ having $\bv^\top \bM \bv = N^2$.

Suppose that $\bv_1^\prime, \dots, \bv_N^\prime \in \RR^{r}$ such that $\bX = \Gram(\bv_1^\prime, \dots, \bv_N^\prime)$.
Let $\bv^\prime$ be the concatenation of $\bv_1^\prime, \dots, \bv_N^\prime$.
Since the Gram matrices of $\bv_1, \dots, \bv_N$ and $\bv_1^\prime, \dots, \bv_N^\prime$ are equal, there must exist $\bZ \in \RR^{r \times r_0}$ with $\bZ\bv_i = \bv_i^\prime$ for each $i \in [N]$ and $\bZ^\top \bZ = \bm I_{r_0}$.
Define $\bM^\prime \in \RR^{r N \times r N}$ to have blocks
\begin{equation}
    \bM^\prime_{[ij]} \colonequals \left\{\begin{array}{lcr} \bZ \bM_{[ij]} \bZ^\top & : & i \neq j, \\ \bm I_{r} & : & i = j. \end{array}\right.
\end{equation}
Equivalently,
\begin{equation}
    \bM^\prime = (\bm I_N \otimes \bZ) \bM (\bm I_N \otimes \bZ)^\top + \bm I_N \otimes (\bm I_{r} - \bZ\bZ^\top).
\end{equation}
Since $\bZ\bZ^\top \preceq \bm I_{r}$ (the left-hand side is a projection matrix), $\bM^\prime \succeq \bm 0$, and by construction $\bM^\prime_{[ii]} = \bm I_{r}$ and $\bM^\prime_{[ij]}$ is symmetric.
Thus, $\bM^\prime \in \sB(N, r)$.

We also have
\begin{align}
  \bv^{\prime^\top} \bM^\prime \bv^\prime
  &= \sum_{i = 1}^N \|\bv_i^\prime\|_2^2 + \sum_{\substack{1 \leq i, j \leq N  \\ i \neq j}} \bv_i^{\prime^\top}\bM^\prime_{[ij]}\bv_j^\prime \nonumber \\
  &= N + \sum_{\substack{1 \leq i, j \leq N  \\ i \neq j}} \bv_i^{\top}\bM_{[ij]}\bv_j \nonumber \\
  &= N^2.
\end{align}
Lastly, we check the formula for the entries of $\bY$, now distinguishing the cases $i = j$ and $i \neq j$:
\begin{align}
  Y_{(ii)(k\ell)}
  &= X_{k\ell} = \la \bv^\prime_k, \bv^\prime_\ell \ra = \bv_k^{\prime^\top} \bM_{[ii]}^\prime \bv_\ell^\prime, \\
  Y_{(ij)(k\ell)}
  &= \bv_k^\top \bM_{[ij]} \bv_\ell \nonumber \\
  &= \bv_k^{\prime^\top} \bZ \bM_{[ij]} \bZ^\top \bv_\ell^\prime \nonumber \\
  &=\bv_k^{\prime^\top} \bM_{[ij]}^\prime \bv_\ell^\prime \text{   }(\text{for } i \neq j),
\end{align}
completing the proof.

\subsection{Proof of Theorem~\ref{thm:sos4}: Gram Vector Witness to Pseudomoment Witness}

Suppose that $\bX = \Gram(\bv_1, \dots, \bv_N) \in \RR^{N \times N}$ for some $\bv_i \in \RR^r$ having $\sum_{i = 1}^N \|\bv_i\|_2^2 = N$.
Let $\bv$ be the concatenation of $\bv_1, \dots, \bv_N$.
Suppose also that $\bM \in \sB(N, r)$ with $\bv^\top \bM \bv = N^2$.
We will show that $\bY \in \RR^{N^2 \times N^2}$ defined by
\begin{equation}
    Y_{(ij)(k\ell)} = \bv_i^\top \bM_{[jk]} \bv_\ell
\end{equation}
is a degree 4 pseudomoment matrix.
Recall that this requires the following properties to hold:
\begin{enumerate}
\item $\bY \succeq \bm 0$.
\item $Y_{(ij)(kk)}$ does not depend on the index $k$.
\item $Y_{(ii)(ii)} = 1$ for every $i \in [N]$.
\item $Y_{(ij)(k\ell)}$ is invariant under permutations of the indices $i, j, k, \ell$.
\end{enumerate}
(That the upper left $N \times N$ block of $\bY$ is $\bX$ follows from Property 4 and that $\bM_{[ii]} = \bm I_r$.)
We will obtain these one by one below.
This essentially just entails reversing the derivation of the previous part; however, verifying some of the properties of $\bY$ will require a more detailed understanding of the factorization of $\bM$ that we used.

The simplest is Property 1: since $\bM \succeq 0$, there exist some $\bU_1, \dots, \bU_N \in \RR^{r^\prime \times r}$ for some $r^\prime \geq 1$ such that $\bM_{[jk]} = \bU_j^\top \bU_k$.
Thus,
\begin{equation}
    Y_{(ij)(k\ell)} = \bv_i^\top \bU_j^\top \bU_k \bv_\ell = \la \bU_j\bv_i, \bU_k\bv_\ell \ra,
\end{equation}
so $\bY = \Gram(\bU_1\bv_1, \dots, \bU_N\bv_N) \succeq \bm 0$.

For Properties 2 and 3, we will first need some basic results on the spectrum of $\bM \in \sB(N, r)$.
The proofs of these facts are given in Appendix~\ref{app:BNr-structure}.
\begin{proposition}
    \label{prop:blocksym-structure}
    Let $\bM \in \sB(N, r)$. Then,
    \begin{enumerate}
    \item $\|\bM_{[ij]}\| \leq 1$ for all $i, j \in [N]$;
    \item $\|\bM\| \leq N$;
    \item if $\bM\bv = N\bv$, and $\bm 0 \neq \bv \in \RR^{rN}$ is the concatenation of $\bv_i \in \RR^r$, then the norms $\|\bv_i\|_2$ are all equal, and $\bM_{[ij]}\bv_j = \bv_i$ for all $i, j \in [N]$.
    \end{enumerate}
\end{proposition}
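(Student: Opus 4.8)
The plan is to deduce all three claims from just the two defining properties $\bM \succeq 0$ and $\bM_{[ii]} = \bm I_r$ (plus $\bM = \bM^\top$), together with Cauchy--Schwarz; the third claim will fall out as the equality case of the chain of inequalities that proves the second. I would begin with Claim 1. Fix $i \neq j$ (the case $i = j$ is immediate since $\bM_{[ii]} = \bm I_r$) and arbitrary unit vectors $\ba, \bb \in \RR^r$, and plug into $\bw^\top \bM \bw \geq 0$ the test vector $\bw$ equal to $\ba$ in block $i$, to $-t\bb$ in block $j$, and to $\bm 0$ in every other block. Using $\bM_{[ii]} = \bM_{[jj]} = \bm I_r$ and $\bM_{[ji]} = \bM_{[ij]}^\top$, this evaluates to $1 + t^2 - 2t\,\ba^\top \bM_{[ij]}\bb \geq 0$ for all $t \in \RR$, and nonnegativity of this quadratic in $t$ forces $|\ba^\top \bM_{[ij]}\bb| \leq 1$; taking the supremum over unit $\ba, \bb$ gives $\|\bM_{[ij]}\| \leq 1$.

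For Claim 2, since $\bM \succeq 0$ we have $\|\bM\| = \max_{\|\bw\| = 1} \bw^\top \bM \bw$. Writing $\bw$ in blocks $\bw_1, \dots, \bw_N \in \RR^r$, I would bound
\[
\bw^\top \bM \bw \;=\; \sum_{i,j=1}^N \bw_i^\top \bM_{[ij]} \bw_j \;\leq\; \sum_{i,j=1}^N \|\bM_{[ij]}\|\,\|\bw_i\|\,\|\bw_j\| \;\leq\; \Big(\sum_{i=1}^N \|\bw_i\|\Big)^{\!2} \;\leq\; N \sum_{i=1}^N \|\bw_i\|^2 \;=\; N,
\]
using Claim 1 in the middle step and Cauchy--Schwarz (equivalently QM--AM) in the last.

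For Claim 3, the key observation is that $\bM\bv = N\bv$ makes $\bv$ an equality case throughout the previous display: running it with $\bw = \bv$ and using $\bv^\top \bM \bv = N\|\bv\|^2$ shows all three inequalities are tight. Tightness of $(\sum_i \|\bv_i\|)^2 = N\sum_i \|\bv_i\|^2$ forces $\|\bv_1\| = \cdots = \|\bv_N\| =: c$, with $c > 0$ since $\bv \neq \bm 0$. Tightness of the middle step gives $\sum_{i,j}\big(\|\bv_i\|\,\|\bv_j\| - \bv_i^\top \bM_{[ij]}\bv_j\big) = 0$; since each summand is nonnegative via the refinement $\bv_i^\top \bM_{[ij]}\bv_j \leq \|\bv_i\|\,\|\bM_{[ij]}\bv_j\| \leq \|\bv_i\|\,\|\bv_j\|$ (the second inequality being Claim 1), each summand vanishes, so for every $(i,j)$ one has the double equality $\bv_i^\top \bM_{[ij]}\bv_j = \|\bv_i\|\,\|\bM_{[ij]}\bv_j\| = \|\bv_i\|\,\|\bv_j\| = c^2$. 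The first equality is the Cauchy--Schwarz equality case, so $\bM_{[ij]}\bv_j$ is a scalar multiple of $\bv_i$, nonnegative since the inner product $c^2 > 0$; the norm equality $\|\bM_{[ij]}\bv_j\| = c = \|\bv_i\|$ then pins the scalar to $1$, i.e. $\bM_{[ij]}\bv_j = \bv_i$.

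I do not anticipate a genuine obstacle: everything rests on the psd quadratic form and Cauchy--Schwarz. The only point needing care is the equality-case bookkeeping in Claim 3, where one must carry the two inequalities $\bv_i^\top(\bM_{[ij]}\bv_j) \leq \|\bv_i\|\,\|\bM_{[ij]}\bv_j\|$ and $\|\bM_{[ij]}\bv_j\| \leq \|\bv_j\|$ simultaneously and argue that both must be tight because their discrepancies are nonnegative and sum to zero. An alternative route to Claim 3 — rewriting $\bM\bv = N\bv$ blockwise as $\sum_j \bM_{[ij]}\bv_j = N\bv_i$ and estimating with the triangle inequality and Claim 1 — also works, but the quadratic-form argument above is the cleanest and reuses the computation from Claim 2 verbatim.
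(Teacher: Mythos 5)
Your proof is correct and follows essentially the same route as the paper's: Claim 1 from positive semidefiniteness of a $2r \times 2r$ principal minor, Claim 2 from the chain $\bw^\top \bM \bw \le \sum_{i,j}\|\bw_i\|\,\|\bw_j\| = \big(\sum_i\|\bw_i\|\big)^2 \le N\|\bw\|^2$, and Claim 3 by forcing every step of that chain to be an equality when $\bw = \bv$. The only visible differences are cosmetic: in Claim 1 you scan over general unit pairs $(\ba,\bb)$ with a free scalar $t$ rather than exploiting the block symmetry $\bM_{[ij]}=\bM_{[ij]}^\top$ to reduce to a numerical-radius bound with the test vector $(\pm\bu,\bu)$, and in Claim 3 you carry the equality-case bookkeeping (splitting $|\bv_i^\top\bM_{[ij]}\bv_j|\le\|\bv_i\|\,\|\bM_{[ij]}\bv_j\|\le\|\bv_i\|\,\|\bv_j\|$ and forcing both to be tight) more explicitly than the paper's rather terse statement, which is a small gain in clarity rather than a different idea.
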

\noindent
From Claim 2 in the Proposition, since $\|\bv\|_2^2 = \Tr(\bX) = N$, then if $\bv^\top \bM \bv = N^2$ we must have $\bM \bv = N\bv$.
Therefore, by Claim 3, $\|\bv_i\|_2 = 1$ for each $i \in [N]$.
Also by Claim 3, we have
\begin{equation}
    Y_{(ij)(kk)} = \bv_i^\top \bM_{[jk]} \bv_k = \la \bv_i, \bv_j \ra.
\end{equation}
This gives Property 2, and taking $i = j = k$ gives Property 3 since $\|\bv_i\|_2 = 1$.

Property 4 is more subtle to establish.
First, for a moment treating $i, j, k, \ell$ as merely four distinct symbols, note that the symmetric group on $\{i, j, k, \ell\}$ is generated by the three transpositions $(ij)$, $(jk)$, and $(k\ell)$.
Therefore, to establish Property 4 it suffices to show the three equalities
\begin{equation}
    Y_{(ij)(k\ell)} = Y_{(ji)(k\ell)} = Y_{(ij)(\ell k)} = Y_{(ik)(j\ell)}
    \label{eq:sufficient-permutation-equalities}
\end{equation}
for all $i, j, k, \ell \in [N]$.
One equality follows directly from both $\bM_{[jk]}$ and $\bM$ being symmetric, whereby $\bM_{[jk]} = \bM_{[kj]}$:
\begin{equation}
    Y_{(ij)(k\ell)} = \bv_i^\top \bM_{[jk]} \bv_\ell = \bv_i^\top \bM_{[kj]} \bv_\ell = Y_{(ik)(j\ell)}.
\end{equation}
The others require a more detailed argument involving a factorization of $\bM \in \sB(N, r)$, as follows.
\begin{proposition}
    \label{prop:blocksym-factorization}
    Let $\bM \in \sB(N, r)$.
    Then, there exists $\bU \in \RR^{r^\prime \times rN}$ for some $r \leq r^\prime \leq rN$ such that $\bM = \bU^\top \bU$, where
    \begin{equation}
        \bU = \left[
            \begin{array}{cccc}
              \bS_1 & \bS_2 & \cdots & \bS_N \\
              \bR_1 & \bR_2 & \cdots & \bR_N
            \end{array}\right]
    \end{equation}
    for some $\bS_i \in \RR^{r \times r}_{\sym}$, $\bS_1 = \bm I_r$, $\bR_i \in \RR^{(r^\prime - r) \times r}$, $\bR_1 = \bm 0$, which satisfy the relations
    \begin{align}
      \bS_i^2 + \bR_i^\top \bR_i &= \bm I_r, \label{eq:s-r-diag} \\
      \bS_i \bS_j - \bS_j \bS_i + \bR_i^\top \bR_j - \bR_j^\top \bR_i &= \bm 0. \label{eq:s-r-commutator}
    \end{align}
\end{proposition}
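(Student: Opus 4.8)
The plan is to obtain the factorization from an arbitrary rank factorization of $\bM$ followed by a single orthogonal change of basis that normalizes the first block column, in close parallel with Case~1 of the proof of Theorem~\ref{thm:sos4}. Since $\bM \succeq \bm 0$, fix $\bM = \bW^\top \bW$ with $\bW \in \RR^{r' \times rN}$ and $r' \colonequals \rank(\bM)$. Because $\bM_{[11]} = \bm I_r$ is a submatrix of $\bM$, we have $r' = \rank(\bM) \geq r$, and trivially $r' \leq rN$, so the required range $r \leq r' \leq rN$ holds. Partition $\bW = [\bW_1 \cdots \bW_N]$ into block columns $\bW_i \in \RR^{r' \times r}$, so $\bM_{[ij]} = \bW_i^\top \bW_j$; in particular $\bW_1^\top \bW_1 = \bM_{[11]} = \bm I_r$, i.e.\ $\bW_1$ has orthonormal columns.

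Next I would normalize the first block column. Completing the columns of $\bW_1$ to an orthonormal basis of $\RR^{r'}$ and collecting those vectors as the columns of $\bQ^\top$ for some $\bQ \in \sO(r')$, a short computation gives that $\bQ\bW_1$ has $\bm I_r$ as its top $r\times r$ block and $\bm 0$ below. Since $\bU^\top \bU$ is unchanged when $\bU$ is replaced by $\bQ\bU$ with $\bQ$ orthogonal, set $\bU \colonequals \bQ\bW$, so that $\bU^\top\bU = \bM$ still holds. Partition $\bU$ simultaneously into block columns $\bU = [\bU_1 \cdots \bU_N]$ and into its top $r$ rows and bottom $r' - r$ rows, writing $\bU_i = \left[\begin{smallmatrix}\bS_i \\ \bR_i\end{smallmatrix}\right]$ with $\bS_i \in \RR^{r \times r}$ and $\bR_i \in \RR^{(r' - r) \times r}$. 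By construction $\bS_1 = \bm I_r$ and $\bR_1 = \bm 0$, which is the claimed block form of $\bU$.

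It then remains to check the three outstanding claims, all of which fall out of the defining properties of $\bM \in \sB(N,r)$. For symmetry of $\bS_i$: since $\bU_1 = \left[\begin{smallmatrix}\bm I_r \\ \bm 0\end{smallmatrix}\right]$ we get $\bS_i = \bU_1^\top \bU_i = \bM_{[1i]}$, and $\bM_{[1i]}$ is symmetric by Property~3 of Definition~\ref{def:blocksym}, so $\bS_i = \bS_i^\top$. Relation \eqref{eq:s-r-diag} is then the expansion $\bm I_r = \bM_{[ii]} = \bU_i^\top\bU_i = \bS_i^2 + \bR_i^\top\bR_i$, and relation \eqref{eq:s-r-commutator} follows by expanding $\bM_{[ij]} = \bU_i^\top\bU_j = \bS_i\bS_j + \bR_i^\top\bR_j$ and subtracting this identity from its transpose, again using that $\bM_{[ij]}$ is symmetric. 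There is no serious obstacle here; the only point worth flagging is that the symmetry of the top blocks $\bS_i$ is forced automatically by the block-symmetry hypothesis on $\bM$, so that beyond the rotation $\bQ$ fixing the first block column, no further basis change is required.
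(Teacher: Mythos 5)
Your proof is correct and follows essentially the same route as the paper's: take a rank factorization $\bM = \bU^\top \bU$, use one left-multiplication by an orthogonal matrix to force $\bU_1 = \left[\begin{smallmatrix}\bm I_r \\ \bm 0\end{smallmatrix}\right]$, observe $\bS_i = \bM_{[1i]}$ is symmetric by block symmetry, and then read \eqref{eq:s-r-diag} and \eqref{eq:s-r-commutator} off $\bM_{[ii]}$ and $\bM_{[ij]} - \bM_{[ij]}^\top$ respectively. The only differences are cosmetic (you make the construction of $\bQ$ from a completed orthonormal basis explicit, and you phrase the commutator identity as subtracting $\bM_{[ij]}$ from its own transpose rather than as $\bM_{[ij]} - \bM_{[ji]}$, which is equivalent since both blocks are symmetric).
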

\noindent
(The latter relations encode the conditions $\bM_{[ii]} = \bm I_r$ and $\bM_{[ij]}^\top = \bM_{[ij]}$, respectively.)
Combining Proposition~\ref{prop:blocksym-structure}'s Claim 3 and Proposition~\ref{prop:blocksym-factorization}, we find that
\begin{align}
  \bv_i &= \bM_{[i1]}\bv_1 = \bS_i\bv_1, \label{eq:i-i-1} \\
  \bv_1 &= \bM_{[1i]}\bv_i = \bS_i \bv_i. \label{eq:1-i-i}
\end{align}
We may therefore expand the entries of $\bY$ in terms of the matrices $\bS_i$ and $\bR_i$ and the vector $\bv_1$:
\begin{align}
  Y_{(ij)(k\ell)}
  &= \bv_i^\top \bM_{[jk]} \bv_\ell \nonumber \\
  &= \bv_1^\top \bS_i (\bS_j\bS_k + \bR_j^\top \bR_k)\bS_\ell \bv_1 \nonumber \\
  &= \bv_1^\top \bS_i\bS_j\bS_k\bS_\ell \bv_1 + \bv_1^\top \bS_i \bR_j^\top \bR_k \bS_\ell \bv_1.
\end{align}
To show the first two equalities of \eqref{eq:sufficient-permutation-equalities}, it then suffices to show that for any $i, j \in [N]$, we have
\begin{align}
  \bS_i\bS_j \bv_1 &\eqqu \bS_j\bS_i \bv_1, \label{eq:s-comm-req} \\
  \bR_i\bS_j \bv_1 &\eqqu \bR_j\bS_i \bv_1. \label{eq:r-comm-req}
\end{align}

Observe first that, by \eqref{eq:i-i-1} and \eqref{eq:1-i-i}, we have
\begin{equation}
    \bS_i^2 \bv_1 = \bv_1.
\end{equation}
Taking \eqref{eq:s-r-diag} as a quadratic form with $v_1$, we find
\begin{equation}
  1 = \|\bv_1\|_2^2 = \bv_1^\top \bS_i^2 \bv_1 + \|\bR_i\bv_1\|_2^2 = 1 + \|\bR_i\bv_1\|_2^2,
\end{equation}
hence $\bR_i \bv_1 = \bm 0$ for all $i \in [N]$.
Then, multiplying \eqref{eq:s-r-commutator} on the right by $\bv_1$ establishes \eqref{eq:s-comm-req}.

Next, taking \eqref{eq:s-r-diag} as a quadratic form with $\bv_i = \bS_i\bv_1$, we find
\begin{equation}
  1 = \|\bv_i\|_2^2 = \|\bS_i \bv_i\|_2^2 + \|\bR_i\bv_i\|_2^2 = 1 + \|\bR_i\bv_i\|_2^2,
\end{equation}
so $\bR_i\bS_i\bv_1 = \bR_i\bv_i = \bm 0$ for each $i \in [N]$ as well.
Also, evaluating \eqref{eq:s-r-diag} as a quadratic form with $\bv_j = \bS_j\bv_1$, we have
\begin{equation}
  1 = \|\bv_j\|_2^2 = \|\bS_i \bS_j \bv_1\|_2^2 + \|\bR_i \bS_j \bv_1\|_2^2.
\end{equation}
Taking \eqref{eq:s-r-diag} as a bilinear form with $\bS_i\bv_1$ and $\bS_j\bv_1$ and using the two preceding observations gives
\begin{align}
  0
  &= \bv_1^\top \bS_j(\bS_i \bS_j - \bS_j \bS_i + \bR_i^\top \bR_j - \bR_j^\top \bR_i)\bS_i\bv_1 \nonumber \\
  &= \|\bS_i\bS_j\bv_1\|_2^2 - 1 + \la \bR_i \bS_j \bv_1, \bR_j\bS_i \bv_1 \ra \nonumber \\
  &= -\|\bR_i \bS_j \bv_1\|_2^2 + \la \bR_i \bS_j \bv_1, \bR_j\bS_i \bv_1 \ra.
\end{align}
The same holds with indices $i$ and $j$ exchanged, so we find
\begin{equation}
    \la \bR_i \bS_j \bv_1, \bR_j\bS_i \bv_1 \ra = \|\bR_i \bS_j \bv_1\|_2^2 = \|\bR_j \bS_i \bv_1\|_2^2 = \|\bR_i \bS_j \bv_1\|_2\|\bR_j \bS_i \bv_1\|_2.
\end{equation}
Thus the Cauchy-Schwarz inequality holds tightly between the vectors $\bR_i \bS_j \bv_1$ and $\bR_j\bS_i \bv_1$, so $\bR_i \bS_j \bv_1 = \bR_j\bS_i \bv_1$, establishing \eqref{eq:r-comm-req} and completing the proof.

\subsection{Interpreting Theorem~\ref{thm:sos4} as a Relaxation}

Since $\fE_{4}^N$ may be seen as a relaxation of the cut polytope $\fC^N$, one expects that the description of $\fE_4^N$ in terms of an SDP over the matrices of $\sB(N, r)$, as stated in Theorem~\ref{thm:sos4}, should itself relax a description of $\fC^N$ in terms of a similar SDP with additional non-convex constraints.
In this section, we show that the most naive such description one might expect is in fact incorrect, and give the correct description, which highlights an interesting connection with ideas from quantum information theory.
The proofs of the results we give are deferred to Appendix~\ref{app:sos4-relaxations}.

Naively, by analogy with the fact that if $\bX \in \fE_2^N$ with $\rank(\bX) = 1$ then $\bX = \bx\bx^\top$ for $\bx \in \{\pm 1\}^N$, one might expect that constraining the rank of $\bM \in \sB(N, r)$ in Theorem~\ref{thm:sos4} to be as small as possible, namely to equal $r$, would give a description of $\fC^N$.
Unfortunately, as the following result shows, this only holds in one direction: if the Gram vector witness $\bM$ has rank $r$ then the associated $\bX \in \fC^N$, but there exist $\bX \in \fC^N$ with $\rank(\bX) = r$ whose membership in $\fE_4^N$ does not admit a Gram vector witness $\bM$ with $\rank(\bM) = r$.
\begin{proposition}
    \label{prop:blocksym-rank-constrained}
    Let $\bv_1, \dots, \bv_N \in \RR^{r}$, let $\bX = \Gram(\bv_1, \dots, \bv_N)$, and let $\bv \in \RR^{rN}$ be the concatenation of $\bv_1, \dots, \bv_N$.
    Then, if $\sum_{i = 1}^N \|\bv_i\|_2^2 = N$ and there exists $\bM \in \sB(N, r)$ with $\rank(\bM) = r$ and $\bv^\top \bM \bv = N^2$, then $\bX \in \fC^N$.
    On the other hand, if $N \notin \{1, 2\}$ and $N$ is not divisible by 4, then $\bm I_N \in \fC^N$ with $\bm I_N = \Gram(\be_1, \dots, \be_N)$, but letting $\bv$ be the concatenation of $\be_1, \dots, \be_N$, there does not exist $\bM \in \sB(N, N)$ with $\bv^\top \bM \bv = N^2$ and $\rank(\bM) = N$.
\end{proposition}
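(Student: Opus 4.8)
The plan is to treat both halves with the same device: factor the rank‑constrained witness $\bM$ into Gram form and read off its block structure, just as in the proof of Theorem~\ref{thm:sos4}. Suppose $\bM \in \sB(N, r)$ with $\rank(\bM) = r$. Then $\bM = \bU^\top \bU$ for some $\bU = [\bU_1, \dots, \bU_N] \in \RR^{r \times rN}$ with blocks $\bU_i \in \RR^{r \times r}$, so $\bM_{[ij]} = \bU_i^\top \bU_j$, and $\bM_{[ii]} = \bm I_r$ forces $\bU_i \in \sO(r)$. Setting $\bR_i \colonequals \bU_1^\top \bU_i \in \sO(r)$, we get $\bR_1 = \bm I_r$ and $\bM_{[ij]} = \bR_i^\top \bR_j$; the block‑symmetry condition with $i = 1$ gives $\bR_j = \bR_j^\top$, and then in general $\bR_i \bR_j = \bR_i^\top \bR_j = (\bR_i^\top \bR_j)^\top = \bR_j \bR_i$. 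Thus $\{\bR_i\}$ is a commuting family of symmetric orthogonal matrices, i.e.\ of commuting involutions, and so admits a common orthonormal eigenbasis $\bm O \in \sO(r)$: $\bR_i = \bm O \bm D_i \bm O^\top$ with $\bm D_i = \diag(d_{i,1}, \dots, d_{i,r})$, $d_{i,k} \in \{\pm 1\}$, and $\bm D_1 = \bm I_r$. Finally, by Claim~2 of Proposition~\ref{prop:blocksym-structure}, $\|\bM\| \leq N$, so $\|\bv\|_2^2 = \Tr(\bX) = N$ together with $\bv^\top \bM \bv = N^2$ forces $\bM \bv = N\bv$; reading this blockwise gives $\bU_i^\top\big(\sum_j \bU_j \bv_j\big) = N\bv_i$, and comparing the case $i$ with the case $i = 1$ yields $\bv_i = \bU_i^\top \bU_1 \bv_1 = \bR_i^\top \bv_1 = \bR_i \bv_1$ for all $i$, with $\|\bv_1\|_2 = 1$ since the $\bv_i$ all have equal norm and $\sum_i \|\bv_i\|_2^2 = N$.

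For the forward implication, write $\widetilde{\bv} \colonequals \bm O^\top \bv_1$, a unit vector, and compute
\begin{equation}
    X_{ij} = \la \bR_i \bv_1, \bR_j \bv_1 \ra = \bv_1^\top \bR_i \bR_j \bv_1 = \widetilde{\bv}^\top \bm D_i \bm D_j \widetilde{\bv} = \sum_{k = 1}^{r} \widetilde{v}_k^2\, d_{i,k} d_{j,k}.
\end{equation}
Hence $\bX = \sum_{k = 1}^{r} p_k\, \bs^{(k)} (\bs^{(k)})^\top$ with $p_k \colonequals \widetilde{v}_k^2 \geq 0$, $\sum_k p_k = \|\widetilde{\bv}\|_2^2 = 1$, and $\bs^{(k)} \colonequals (d_{1,k}, \dots, d_{N,k})^\top \in \{\pm 1\}^N$: a convex combination of cut matrices, so $\bX \in \fC^N$, as claimed.

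For the counterexample, I would rerun the analysis above with $\bv_i = \be_i \in \RR^N$ and $r = N$. Then $\bv_i = \bR_i \bv_1$ becomes $\be_i = \bR_i \be_1$, i.e.\ the first column of $\bR_i$ is $\be_i$; in the common eigenbasis this is $\bm D_i \bm u = \bm w_i$, where $\bm u \colonequals \bm O^\top \be_1$ is a unit vector and $\bm w_i \colonequals \bm O^\top \be_i$ are the rows of $\bm O$, hence an orthonormal basis of $\RR^N$. Therefore $\delta_{ij} = \la \bm w_i, \bm w_j \ra = \sum_k d_{i,k} d_{j,k} u_k^2$. Collecting signs into $\bm H \colonequals (d_{i,k})_{i,k \in [N]} \in \{\pm 1\}^{N \times N}$ and setting $\bm P \colonequals \diag(u_1^2, \dots, u_N^2)$, this reads $\bm H \bm P \bm H^\top = \bm I_N$; since $\bm H$ is then invertible, $\bm P$ is too and $\bm H^\top \bm H = \bm P^{-1}$. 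But the diagonal entries of $\bm H^\top \bm H$ are all equal to $N$, forcing $u_k^2 = \frac{1}{N}$ for every $k$, and the vanishing of its off‑diagonal entries forces the columns of $\bm H$ to be orthogonal; so $\bm H$ is a real Hadamard matrix of order $N$. By the classical necessary condition, a Hadamard matrix of order $N \geq 3$ can exist only if $4 \mid N$; hence when $N \notin \{1,2\}$ and $4 \nmid N$ no such $\bm H$ exists, and so no $\bM \in \sB(N, N)$ with $\rank(\bM) = N$ and $\bv^\top \bM \bv = N^2$ exists either. Since $\bm I_N$ is the average of $\bx \bx^\top$ over $\bx \in \{\pm 1\}^N$, we do have $\bm I_N \in \fC^N$, which completes the counterexample.

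The step I expect to need the most care is the passage from $\bv^\top \bM \bv = N^2$ and the block factorization to the identity $\bv_i = \bR_i \bv_1$: one has to check that $\bM \bv = N\bv$ genuinely decomposes blockwise as $\bU_i^\top \bw = N\bv_i$ with a single common vector $\bw \colonequals \sum_j \bU_j \bv_j$, invert this using $\bU_i \in \sO(r)$, and keep track that $\|\bv_1\|_2 = 1$ so that the weights $p_k$ in the forward implication sum to $1$. Once this relation is in hand, everything else — the commuting‑involution structure and simultaneous diagonalization, and in the second part the reduction to a Hadamard matrix — is routine linear algebra, with the only external ingredient being the standard order constraint on real Hadamard matrices.
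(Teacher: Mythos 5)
Your proof is correct and follows essentially the same architecture as the paper's: factor the rank-$r$ witness into blocks $\bU_i \in \sO(r)$, observe that the $\bR_i = \bU_1^\top \bU_i$ (the paper's $\bQ_i$) form a commuting family of symmetric orthogonal involutions, simultaneously diagonalize to extract $\pm 1$ vectors, and in the negative direction reduce to the nonexistence of a real Hadamard matrix of order $N$. The only cosmetic differences are that you re-derive $\bv_i = \bR_i\bv_1$ from $\bM\bv = N\bv$ and block orthogonality rather than invoking Claim 3 of Proposition~\ref{prop:blocksym-structure}, and you reach the Hadamard condition by reusing that relation rather than the paper's direct entrywise bound $(\bM_{[ij]})_{ij} = 1$; both routes yield the same conclusion.
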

\noindent
(The unusual condition on the negative result that $N$ be odd is probably superfluous if one searches for counterexamples other than the identity; the question is related to the relationship between the rank of a matrix in $\fC^N$ and the minimum number of cut matrices to whose convex hull it belongs, which, as is discussed in the proof, is known to be subtle.)

The correct way to ``repair'' this first attempt is quite surprising: the key condition on the Gram vector witness $\bM \in \sB(N, r)$ that is equivalent to $\bX \in \fC^N$ is not minimal rank, but \emph{separability}, a notion studied mainly in quantum information theory.
Related ideas will play a role in our derivation of constraints on the Gram vector witness $\bM$ defined in Theorem~\ref{thm:sos4}, but the full extent of this connection is still unclear and is an intriguing subject for future work.

\begin{definition}
    \label{def:blocksym-sep}
    A matrix $\bM \in \RR^{rN \times rN}$ with $\Tr(\bM) = 1$ is \emph{separable} if there exist $\ba_1, \dots, \ba_m \in \RR^N$ with $\|\ba_i\|_2 = 1$, $\bb_1, \dots, \bb_m \in \RR^r$ with $\|\bb_i\|_2 = 1$, and $\rho_1, \dots, \rho_m \geq 0$ with $\sum_i \rho_i = 1$ such that
    \begin{equation}
        \bM = \sum_{i = 1}^m \rho_i (\ba_i \otimes \bb_i)(\ba_i \otimes \bb_i)^\top.
    \end{equation}
    If it is not possible to write $\bM$ in this way, $\bM$ is \emph{entangled}.
    (More properly, $\bM$ is the \emph{density matrix} representing, with respect to a particular choice of basis, a \emph{bipartite quantum state}, and it is the state that is entangled or separable.)
    We write $\sB_{\sep}(N, r) \subseteq \sB(N, r)$ for the matrices $\bM \in \sB(N, r)$ such that $\frac{1}{rN}\bM$ is separable.
\end{definition}

\begin{proposition}
    \label{prop:blocksym-separable}
    Let $\bv_1, \dots, \bv_N \in \RR^{r}$, let $\bX = \Gram(\bv_1, \dots, \bv_N)$, and let $\bv \in \RR^{rN}$ be the concatenation of $\bv_1, \dots, \bv_N$.
    Then, $\bX \in \fC^N$ if and only if $\sum_{i = 1}^N \|\bv_i\|_2^2 = N$ and there exists $\bM \in \sB_{\sep}(N, r)$ such that $\bv^\top \bM \bv = N^2$.
\end{proposition}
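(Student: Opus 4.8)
We prove the two implications separately, passing between the Gram vector witness $\bM$ and the pseudomoment witness $\bY$ via Theorem~\ref{thm:sos4}; recall that $\Tr(\bM) = rN$ for every $\bM \in \sB(N,r)$, so that ``$\tfrac{1}{rN}\bM$ separable'' refers to an honest density matrix.

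\emph{Necessity.} Suppose $\bX = \sum_t \mu_t\,\bx^{(t)}(\bx^{(t)})^\top$ with $\bx^{(t)} \in \{\pm 1\}^N$, $\mu_t > 0$, $\sum_t \mu_t = 1$; then $\sum_i \|\bv_i\|_2^2 = \Tr(\bX) = N$ immediately. For the witness, first treat the case in which the given Gram vectors are the columns $\tilde\bv_1, \dots, \tilde\bv_N$ of the factor $\bW \in \RR^{T \times N}$ whose rows are $\sqrt{\mu_t}(\bx^{(t)})^\top$ (so $\bW^\top\bW = \bX$). I would take $\tilde\bM \colonequals \sum_t \bx^{(t)}(\bx^{(t)})^\top \otimes \be_t\be_t^\top$: this lies in $\sB(N,T)$ (its diagonal blocks are $\sum_t (x^{(t)}_j)^2\,\be_t\be_t^\top = \bm I_T$), it is separable after normalizing (a convex combination of the product pure states $(\hat{\bx}^{(t)} \otimes \be_t)(\hat{\bx}^{(t)} \otimes \be_t)^\top$), and $\tilde\bv^\top\tilde\bM\tilde\bv = \sum_t \mu_t\langle\bx^{(t)},\bx^{(t)}\rangle^2 = N^2$. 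For an arbitrary Gram representation $\bv_1, \dots, \bv_N \in \RR^r$ of $\bX$, with $\bV \in \RR^{r\times N}$ the matrix of columns, the identity $\bV^\top\bV = \bW^\top\bW$ furnishes a partial isometry $\bO\colon\RR^T \to \RR^r$ with $\bO\bW = \bV$ and $\bO^\top\bO = \bP_{\img(\bW)}$; I would transport $\tilde\bM$ exactly as in Case~2 of the proof of Theorem~\ref{thm:sos4}, setting $\bM \colonequals (\bm I_N \otimes \bO)\,\tilde\bM\,(\bm I_N \otimes \bO)^\top + \bm I_N \otimes (\bm I_r - \bO\bO^\top)$. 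Since $\bO\bO^\top \preceq \bm I_r$ one gets $\bM \in \sB(N,r)$; separability is preserved because $\bm I_N \otimes \bO$ sends product states to product states while the correction term is itself separable; and $\bv^\top\bM\bv = \tilde\bv^\top\tilde\bM\tilde\bv = N^2$ because $\bO^\top\bO$ fixes each $\tilde\bv_i \in \img(\bW)$ and the diagonal blocks contribute $\sum_i \|\bv_i\|_2^2 = \sum_i \|\tilde\bv_i\|_2^2$.

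\emph{Sufficiency.} Conversely, suppose $\bM \in \sB_{\sep}(N,r)$ with $\bv^\top\bM\bv = N^2$ and $\sum_i\|\bv_i\|_2^2 = N$. By Theorem~\ref{thm:sos4}, $\bY$ with $Y_{(ij)(k\ell)} = \bv_i^\top\bM_{[jk]}\bv_\ell$ is a genuine degree $4$ pseudomoment matrix extending $\bX$, so $Y_{(jj)(kk)} = Y_{(kj)(jk)} = Y_{(11)(11)} = 1$ for all $j,k$; also, since $\|\bM\| \leq N$ (Proposition~\ref{prop:blocksym-structure}) and $\bv^\top\bM\bv = N\|\bv\|_2^2$, the vector $\bv$ is an eigenvector of eigenvalue $N$, whence $\|\bv_j\|_2 = 1$ for all $j$. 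Fix a separable decomposition $\bM = \sum_a \tilde\rho_a\,(\bg_a \otimes \bh_a)(\bg_a \otimes \bh_a)^\top$ with $\tilde\rho_a > 0$, $\|\bg_a\|_2 = \|\bh_a\|_2 = 1$, $\bg_a \in \RR^N$, $\bh_a \in \RR^r$, and put $\bc_a \colonequals \bV^\top\bh_a \in \RR^N$, so that $Y_{(ij)(k\ell)} = \sum_a \tilde\rho_a(\bg_a)_j(\bg_a)_k(\bc_a)_i(\bc_a)_\ell$ and $\bM_{[jj]} = \sum_a \tilde\rho_a(\bg_a)_j^2\,\bh_a\bh_a^\top = \bm I_r$. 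The idea is that these constraints force every surviving term of the decomposition to be built from a $\pm 1$ vector. From $Y_{(jj)(kk)} = 1$ for all $j,k$ we get $\sum_a \tilde\rho_a\,\bd_a\bd_a^\top = \one\one^\top$ with $\bd_a \colonequals \bg_a \odot \bc_a$; since the range of a sum of psd rank-one matrices is the span of the vectors, each $\bd_a$ is a multiple $\gamma_a\one$ of the all-ones vector, i.e.\ $(\bg_a)_j(\bc_a)_j = \gamma_a$ independently of $j$. Next, pairing $\bM_{[jj]} = \bm I_r$ with $\bv_k$ gives $\sum_a \tilde\rho_a(\bg_a)_j^2(\bc_a)_k^2 = \|\bv_k\|_2^2 = 1$; writing $p_{aj} \colonequals \tilde\rho_a(\bg_a)_j^2$ and substituting $(\bc_a)_k^2 = \tilde\rho_a\gamma_a^2/p_{ak}$ for the indices $a$ with $\gamma_a \neq 0$ (for which necessarily $(\bg_a)_j \neq 0$ for all $j$), this identity and its $j \leftrightarrow k$ transpose add to $2 = \sum_{a:\gamma_a\neq 0}\tilde\rho_a\gamma_a^2\bigl(\tfrac{p_{aj}}{p_{ak}} + \tfrac{p_{ak}}{p_{aj}}\bigr) + R_{jk} + R_{kj}$, where $R_{jk} \geq 0$ collects the $\gamma_a = 0$ terms. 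Since $\sum_{a:\gamma_a\neq 0}\tilde\rho_a\gamma_a^2 = Y_{(11)(11)} = 1$ and $u/u' + u'/u \geq 2$, every AM--GM gap must vanish: $p_{aj}$ is independent of $j$ when $\gamma_a \neq 0$, and $\bc_a = 0$ (so the term drops out of $\bY$) when $\gamma_a = 0$.

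Hence on the surviving terms $\|\bg_a\|_2 = 1$ forces $(\bg_a)_j^2 = 1/N$, so $\bg_a = \tfrac{1}{\sqrt N}\bm{\sigma}_a$ with $\bm{\sigma}_a \in \{\pm 1\}^N$ and $\bc_a = \gamma_a\sqrt N\,\bm{\sigma}_a$; therefore $Y_{(ij)(k\ell)} = \sum_a \tilde\rho_a\gamma_a^2\,(\bm{\sigma}_a)_i(\bm{\sigma}_a)_j(\bm{\sigma}_a)_k(\bm{\sigma}_a)_\ell$, and since the coefficients $\tilde\rho_a\gamma_a^2 \geq 0$ sum to $1$ we conclude $\bY = \sum_a \tilde\rho_a\gamma_a^2\,(\bm{\sigma}_a \otimes \bm{\sigma}_a)(\bm{\sigma}_a \otimes \bm{\sigma}_a)^\top$ is a genuine moment matrix, whose degree $2$ part is $\bX = \sum_a \tilde\rho_a\gamma_a^2\,\bm{\sigma}_a\bm{\sigma}_a^\top \in \fC^N$.

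The main obstacle is precisely this structural step in the sufficiency direction: an arbitrary separable decomposition of $\bM$ produces an arbitrary collection of unit vectors $\bg_a, \bh_a$, and there is no a priori reason for the $\bg_a$ to be rescaled sign vectors — indeed non-separable witnesses for matrices in $\fE_4^N \setminus \fC^N$ show this fails without the separability hypothesis. The two ingredients that remove all the freedom are (i) the ``diagonal--diagonal'' pseudomoments $Y_{(jj)(kk)} = 1$, which pin each Hadamard product $\bg_a \odot \bc_a$ to the all-ones direction, and (ii) the identity coming from $\bM_{[jj]} = \bm I_r$, which after the above substitution becomes a sum of AM--GM gaps that must all vanish — effectively promoting the separable witness into an honest cut decomposition. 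The only genuinely fiddly bookkeeping, suppressed above, is the treatment of the $\gamma_a = 0$ terms in the sufficiency proof and the verification that the partial-isometry transport in the necessity proof preserves all three defining properties of $\sB_{\sep}(N,r)$.
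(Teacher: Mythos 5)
Your proof is correct; both implications hold and the bookkeeping checks out. The two directions compare to the paper's as follows.

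The \emph{necessity} direction is essentially the paper's approach, differing only in mechanics. The paper takes the cut decomposition $\bX = \sum_k \rho_k\tilde\bd_k\tilde\bd_k^\top$, pads to $m\geq r$ terms, picks $\bZ\in\RR^{m\times r}$ with $\bZ^\top\bZ = \bm I_r$, and directly defines $\bM_{[ij]} = \bZ^\top\bD_i\bD_j\bZ \in \sB(N,r)$; you instead first build $\tilde\bM\in\sB(N,T)$ over the ``Rademacher mixture'' dimension and then transport it to $\sB(N,r)$ with the partial isometry $\bO$, reusing the Case-2 device from the proof of Theorem~\ref{thm:sos4}. The net effect is the same; your version avoids the padding step at the cost of re-verifying that the transport preserves separability, which you handle correctly.

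The \emph{sufficiency} direction is a genuinely different and arguably cleaner argument. The paper works entirely at the level of $\bM$, writes the separable decomposition in the aggregated form $\bM = \sum_i\bA_i\otimes\bb_i\bb_i^\top$, and squeezes $\bv^\top\bM\bv\leq N^2$ through a weighted Cauchy--Schwarz plus an outer Cauchy--Schwarz, with a careful detour to handle vanishing diagonal entries of $\bA_i$ and the $\bm\beta_{jk}$ vectors to extract the rigidity. You instead pass to the pseudomoment matrix $\bY$ via \eqref{eq:M-Y-equiv}, invoke the degree-4 pseudomoment constraint $Y_{(jj)(kk)} = 1$ (which equivalently follows from $\bM_{[jk]}\bv_k = \bv_j$ in Proposition~\ref{prop:blocksym-structure}, so Theorem~\ref{thm:sos4} is not strictly needed here) to force each $\bd_a = \bg_a\odot\bc_a$ into $\mathsf{span}(\one)$ by a rank argument, and then use the $\bM_{[jj]} = \bm I_r$ constraint to generate an AM--GM identity whose vanishing gaps force $(\bg_a)_j^2$ to be constant in $j$. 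The structural reason the separable witness must be a cut mixture is much more visible in your route: the $Y_{(jj)(kk)}=1$ constraint pins the Hadamard products, and the AM--GM slack is the precise obstruction to a non-cut product decomposition. The paper's argument buys self-containment and avoids introducing the $\bd_a$, $p_{aj}$ auxiliary quantities, but the underlying tightness mechanism is arguably easier to see in your version.

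One small remark on presentation: you could shortcut the opening of the sufficiency direction by deriving $Y_{(jj)(kk)} = \bv_j^\top\bM_{[jk]}\bv_k = \bv_j^\top\bv_j = 1$ directly from Proposition~\ref{prop:blocksym-structure} rather than routing through Theorem~\ref{thm:sos4}; this makes the proof logically independent of that theorem, matching the paper's dependency structure.
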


By corollary, if $\bX \in \fE_4^N \setminus \fC^N$, then any Gram vector witness $\bM$ (suitably scaled) must be the density matrix of an entangled state which, by the definition of $\sB(N, r)$, has the \emph{positive partial transpose (PPT)} property that its partial transpose remains psd.
If the partial transpose of a density matrix of a state fails to be psd, it follows that the state is entangled, but the converse does not hold in general \cite{peres:96,horedecki:96}.
The structure of states for which this test does not prove entanglement but which are nonetheless entangled has received considerable attention in the quantum information literature (see e.g.\ \cite{lewenstein:00,szarek:06,leinaas:10,chen:12}, as well as \cite{jaeger:07:book,bengtsson:17:book,aubrun:17:book} for more general discussion).
It is therefore striking that these objects are, per our results, rather commonplace in SOS optimization---for every hypercube optimization problem for which degree 4 SOS is not tight (i.e.\ for which the optimizer $\bX^\star \in \fE_4^N \setminus \fC^N$), there is an underlying entangled PPT state that may be recovered from $\bX^\star$.

\section{Constraints on Witnesses: Lemma~\ref{lem:subspace-constraint} and Theorem~\ref{thm:sos-pataki}}
\label{sec:witness-constraints}

\subsection{Proof of Lemma~\ref{lem:subspace-constraint}}

Suppose that $\bX = \Gram(\bv_1, \dots, \bv_N) \in \fE_4^N$ for some $\bv_1, \dots, \bv_N \in \SS^{r - 1}$, $\bv$ is the concatenation of the $\bv_i$, and $\bv^\top \bM^\star \bv = N^2$ for some $\bM^\star \in \sB(N, r)$.
Then, $\bM^\star$ is an optimizer for the following SDP, described by Theorem~\ref{thm:sos4}:
\begin{equation}
    \label{eq:gram-sdp-primal}
    \GramSDP(\bv_1, \dots, \bv_N) \colonequals \left\{\begin{array}{ll}
      \text{maximize} & \la \bv\bv^\top, \bM \ra \\
      \text{subject to} & \bM \succeq 0, \\
                      & \bM_{[ii]} = \bm I_r, \\
                      & \bM_{[ij]} = \bM_{[ij]}^\top \text{ for } i \neq j.
    \end{array}\right\}.
\end{equation}
We next apply basic convex optimization results to this SDP.
Background on these general facts may be found in \cite{ben:01,boyd:04}.
First, we obtain the dual SDP
\begin{equation}
    \label{eq:gram-sdp-dual}
    \GramSDP^{*}(\bv_1, \dots, \bv_N) \colonequals \left\{\begin{array}{ll}
      \text{minimize} & \Tr(\bD) \\
      \text{subject to} & \bD \succeq \bv\bv^\top, \\
                      & \bD_{[ij]} = -\bD_{[ij]}^\top \text{ for } i \neq j.
    \end{array}\right\}.
\end{equation}
It is simple to verify that the Slater condition holds, implying strong duality between the SDPs \eqref{eq:gram-sdp-primal} and \eqref{eq:gram-sdp-dual}, whereby $\GramSDP(\bv_1, \dots, \bv_N) = \GramSDP^*(\bv_1, \dots, \bv_N) = N^2$.
If $\bM^\star$ and $\bD^\star$ are primal and dual variables achieving the optimal values of $\GramSDP$ and $\GramSDP^*$ respectively, then \emph{complementary slackness} must hold between them, $\bM^\star(\bD^\star - \bv\bv^\top) = \bm 0$.

The key to Lemma~\ref{lem:subspace-constraint} is that, while constructing $\bM^\star$ achieving a value of $N^2$ in $\GramSDP$ from $\bv_1, \dots, \bv_N$ (when their Gram matrix belongs to $\fE_4^N$) is difficult (indeed, by the more detailed part of Theorem~\ref{thm:sos4} it is equivalent to constructing the degree 4 pseudomoments themselves), constructing $\bD^\star$ achieving a value of $N^2$ in $\GramSDP^*$ turns out to be straightforward.

The construction uses the \emph{partial transpose} operation, $\bA \mapsto \bA^{\ptop}$ mapping $\RR^{rN \times rN} \to \RR^{rN \times rN}$, which transposes every $r \times r$ block of an $rN \times rN$ matrix:
\begin{equation}
    \bA^\ptop \colonequals \left[ \bA_{[ij]}^\top \right]_{i, j = 1}^N \text{ for } \bA \in \RR^{rN \times rN}.
\end{equation}
We then define $\bD^\star$ as
\begin{equation}
    \bD^\star \colonequals \bv\bv^\top - (\bv\bv^\top)^{\ptop} + \bm I_N \otimes (\bV\bV^\top).
\end{equation}
We have $\Tr(\bD^\star) = \Tr(\bm I_N \otimes (\bV\bV^\top)) = N^2$, and for $i \neq j$, $\bD_{[ij]} = \bv_i\bv_j^\top - \bv_j\bv_i^\top$, which is antisymmetric as required.
The final feasibility condition $\bD^\star \succeq \bv\bv^\top$ is equivalent to  $(\bv\bv^\top)^\ptop \preceq \bm I_N \otimes (\bV\bV^\top)$, which is simple to check directly: let $\bx \in \RR^{rN}$ be the concatenation of $\bx_1, \dots, \bx_N \in \RR^r$, then
\begin{align*}
  \bx^\top (\bv\bv^\top)^\ptop\bx
  &= \sum_{i = 1}^N \sum_{j = 1}^N \la \bx_i, \bv_j \ra \la \bx_j, \bv_i \ra \\
  &\leq \sum_{i = 1}^N \sum_{j = 1}^N \la \bx_i, \bv_j \ra^2 \\
  &= \sum_{i = 1}^N \bx_i^\top (\bV\bV^\top) \bx_i \\
  &= \bx^\top (\bm I_N \otimes (\bV\bV^\top)) \bx.
\end{align*}
Thus, by complementary slackness an optimizer $\bM^\star$ in $\GramSDP$ must have positive eigenvectors in $\ker(\bD^\star - \bv\bv^\top) = \ker(\bm I_N \otimes (\bV\bV^\top) - (\bv\bv^\top)^\ptop)$.

Remarkably, this subspace may be calculated exactly to produce the result of Lemma~\ref{lem:subspace-constraint}.
This calculation hinges on some elementary but perhaps not widely known facts of linear algebra that originate in applications to quantum information theory.
The first is the following, a rewriting of the singular value decomposition.

\begin{proposition}[Schmidt Decomposition, Section 2.2.2 of \cite{aubrun:17:book}]
    \label{prop:schmidt-decomp}
    Let $r \leq N$, $\bV \in \RR^{r \times N}$ having singular value decomposition $\bV = \sum_{i = 1}^r \sigma_i \by_i\bz_i^\top$, where the $\by_i \in \RR^r$ and $\bz_i \in \RR^N$ each form orthonormal sets and $\sigma_i \geq 0$.
    Then,
    \begin{equation}
        \vec(\bV) = \sum_{i = 1}^r \sigma_i \bz_i \otimes \by_i.
    \end{equation}
\end{proposition}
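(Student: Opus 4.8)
The plan is to reduce the claim to the rank-one case and then invoke linearity of the vectorization map, so that no hard analysis is involved. First I would fix conventions. For $\bV \in \RR^{r \times N}$ with columns $\bv_1, \dots, \bv_N \in \RR^r$, the vector $\vec(\bV) \in \RR^{rN}$ is by definition the concatenation of these columns, so in the block notation introduced earlier its $j$-th block is $\vec(\bV)_{[j]} = \bv_j = \bV\be_j$ for $j \in [N]$. Likewise, for $\bz \in \RR^N$ and $\by \in \RR^r$, the Kronecker product $\bz \otimes \by \in \RR^{rN}$ is the vector whose $j$-th block is $(\bz \otimes \by)_{[j]} = z_j\by$.

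The one computational step is the rank-one identity $\vec(\by\bz^\top) = \bz \otimes \by$ for arbitrary $\by \in \RR^r$, $\bz \in \RR^N$. This is immediate from comparing blocks: the $j$-th column of $\by\bz^\top$ is $\by\bz^\top\be_j = z_j\by$, which is exactly the $j$-th block of $\bz \otimes \by$. This is the step where one must be careful that the ordering $\bz \otimes \by$, rather than $\by \otimes \bz$, is the one compatible with the column-stacking convention for $\vec$; the block comparison pins it down with no ambiguity, and this is the only place the argument could go wrong.

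Finally, since $\vec \colon \RR^{r \times N} \to \RR^{rN}$ is linear, applying it to the singular value decomposition $\bV = \sum_{i=1}^r \sigma_i \by_i\bz_i^\top$ and using the rank-one identity termwise yields
\[
    \vec(\bV) = \sum_{i=1}^r \sigma_i \vec(\by_i\bz_i^\top) = \sum_{i=1}^r \sigma_i\, \bz_i \otimes \by_i,
\]
which is the assertion. I do not expect any genuine obstacle here: the orthonormality of the $\by_i$ and $\bz_i$ and the sign condition $\sigma_i \geq 0$ play no role in the computation, and are recorded only so that the displayed expression is recognizably the Schmidt decomposition of $\vec(\bV)$, which is how the proposition will be applied in the sequel.
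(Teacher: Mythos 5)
Your proof is correct and is essentially the paper's own argument: both reduce to the rank-one identity $\vec(\by\bz^\top) = \bz \otimes \by$ and then apply linearity of $\vec$ to the singular value decomposition. You merely spell out the block-level verification of the rank-one identity, which the paper leaves implicit.
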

\noindent
Note that in our case, $\bv = \vec(\bV)$.

This representation makes it convenient to work with the partial transpose; in particular, using the Schmidt decomposition, it is possible to diagonalize the partial transpose of a rank one matrix explicitly, as follows.
(This result appears to be folkloric in the quantum information literature; the references we give are unlikely to be the earliest.)
\begin{proposition}[Lemma III.3 of \cite{hildebrand:07}; Lemma 1 of \cite{johnston:18}]
    \label{prop:rank-one-pt}
    Let $\bV \in \RR^{r \times N}$ with $r \leq N$ and $\bV = \sum_{i = 1}^r \sigma_i \by_i\bz_i^\top$ where the $\by_i \in \RR^r$ and $\bz_i \in \RR^N$ each form orthonormal sets and $\sigma_i \geq 0$.
    Let $\bv = \vec(\bV)$.
    Then,
    \begin{equation}
        (\bv\bv^\top)^{\ptop} = \sum_{i = 1}^r \sigma_i^2 \bd_i\bd_i^\top + \sum_{1 \leq i < j \leq r} \sigma_i\sigma_j\bs_{ij}\bs_{ij}^\top -
        \sum_{1 \leq i < j \leq r} \sigma_i\sigma_j\ba_{ij}\ba_{ij}^\top
        \label{eq:pt-rank-one-spectral-decomp}
    \end{equation}
    where
    \begin{align}
      \bd_i &= \bz_i \otimes \by_i, \\
      \bs_{ij} &= \frac{1}{\sqrt{2}}\left(\bz_i \otimes \by_j + \bz_j \otimes \by_i\right), \\
      \ba_{ij} &= \frac{1}{\sqrt{2}}\left(\bz_i \otimes \by_j - \bz_j \otimes \by_i\right).
    \end{align}
    The $r^2$ vectors $\bd_i, \bs_{ij}, \ba_{ij}$ moreover have unit norm and are mutually orthogonal, so \eqref{eq:pt-rank-one-spectral-decomp} is a spectral decomposition (up to the removal of any terms whose coefficient is zero if $\bV$ is not full rank).
\end{proposition}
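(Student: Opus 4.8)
The plan is to derive \eqref{eq:pt-rank-one-spectral-decomp} directly from the Schmidt decomposition of $\bv$. By Proposition~\ref{prop:schmidt-decomp}, $\bv = \sum_{i=1}^r \sigma_i\,\bz_i \otimes \by_i$, so by the mixed-product rule for Kronecker products,
\[
    \bv\bv^\top = \sum_{i,j=1}^r \sigma_i\sigma_j\,(\bz_i\bz_j^\top)\otimes(\by_i\by_j^\top).
\]
Under the block convention used to define the partial transpose (the left Kronecker factor, in $\RR^{N\times N}$, indexes the $r\times r$ blocks, while the right factor acts within a block), one has $(\bA\otimes\bB)^\ptop = \bA\otimes\bB^\top$; I would record this elementary fact first. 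Applying it termwise gives
\[
    (\bv\bv^\top)^\ptop = \sum_{i,j=1}^r \sigma_i\sigma_j\,(\bz_i\bz_j^\top)\otimes(\by_j\by_i^\top) = \sum_{i,j=1}^r \sigma_i\sigma_j\,(\bz_i\otimes\by_j)(\bz_j\otimes\by_i)^\top.
\]

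The next step is to symmetrize this sum over ordered pairs. The diagonal terms $i = j$ contribute $\sum_{i} \sigma_i^2\,\bd_i\bd_i^\top$. For each unordered pair $\{i,j\}$ with $i<j$, I would collect the $(i,j)$ and $(j,i)$ summands and apply the polarization identity $\bm p\bm q^\top + \bm q\bm p^\top = \tfrac12\big((\bm p+\bm q)(\bm p+\bm q)^\top - (\bm p-\bm q)(\bm p-\bm q)^\top\big)$ with $\bm p = \bz_i\otimes\by_j$ and $\bm q = \bz_j\otimes\by_i$. Since $\bm p + \bm q = \sqrt{2}\,\bs_{ij}$ and $\bm p - \bm q = \sqrt{2}\,\ba_{ij}$, this pair of summands equals $\sigma_i\sigma_j\big(\bs_{ij}\bs_{ij}^\top - \ba_{ij}\ba_{ij}^\top\big)$, and summing over $i<j$ yields exactly \eqref{eq:pt-rank-one-spectral-decomp}.

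It then remains to check that the $r^2$ vectors $\bd_i$, $\bs_{ij}$, $\ba_{ij}$ ($1\le i<j\le r$) form an orthonormal set in $\RR^{rN}$, which turns \eqref{eq:pt-rank-one-spectral-decomp} into a genuine spectral decomposition with eigenvalues $\sigma_i^2 \ge 0$ on the $\bd_i$, $\sigma_i\sigma_j\ge 0$ on the $\bs_{ij}$, and $-\sigma_i\sigma_j\le 0$ on the $\ba_{ij}$ (terms with a vanishing $\sigma$ being simply dropped when $\bV$ is rank-deficient). This is a routine computation from $\langle\bm a\otimes\bm b,\bm c\otimes\bm d\rangle = \langle\bm a,\bm c\rangle\langle\bm b,\bm d\rangle$ and the orthonormality of $\{\by_i\}$ and of $\{\bz_i\}$: one finds $\langle\bs_{ij},\bs_{k\ell}\rangle = \langle\ba_{ij},\ba_{k\ell}\rangle = \delta_{ik}\delta_{j\ell}$, while $\langle\bd_i,\bs_{jk}\rangle$, $\langle\bd_i,\ba_{jk}\rangle$, and $\langle\bs_{ij},\ba_{k\ell}\rangle$ all vanish, the last because $\bs_{ij}$ and $\ba_{ij}$ are the symmetric and antisymmetric parts of the same pair $\bz_i\otimes\by_j$, $\bz_j\otimes\by_i$. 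Counting $r + 2\binom{r}{2} = r^2$ confirms nothing is missed.

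I do not expect a genuine obstacle here. The two points demanding care are (i) pinning down the Kronecker ordering so that the partial transpose acts on the correct tensor factor---getting this backwards would interchange the roles of the $\by_i$ and $\bz_i$---and (ii) the delta-chasing in the orthonormality verification, where the hypotheses $i\neq j$ and $i<j$, $k<\ell$ are exactly what kill the ``crossed'' terms such as $\delta_{i\ell}\delta_{jk}$. Everything else is a straightforward expansion.
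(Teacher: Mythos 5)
Your argument is correct and follows the same route as the paper's proof: apply the Schmidt decomposition to $\bv$, expand $\bv\bv^\top$ by the mixed-product rule, take the partial transpose termwise, split diagonal from off-diagonal terms, and diagonalize each rank-two off-diagonal block. The paper compresses the last step into the single remark ``the result follows by diagonalizing the rank-two matrices in the second sum,'' whereas you carry it out explicitly via the polarization identity and verify orthonormality---filling in detail, but not changing the method.
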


With this, it is straightforward to compute the subspace we are interested in, since $\bm I_N \otimes (\bV\bV^\top)$ can also be diagonalized explicitly in a basis of the same Kronecker products $\bz_i \otimes \by_j$.
\begin{proposition}
    \label{prop:v-sym}
    Let $\bV \in \RR^{r \times N}$ with $r \leq N$ have full rank, and let $\bv = \vec(\bV)$.
    Then,
    \begin{equation}
        \bm I_N \otimes (\bV\bV^\top) \succeq (\bv\bv^\top)^{\ptop}.
    \end{equation}
    The subspace on which this inequality is tight is given by
    \begin{equation}
        \ker\bigg(\bm I_N \otimes (\bV\bV^\top) - (\bv\bv^\top)^{\ptop}\bigg) = \left\{ \vec(\bS\bV) : \bS \in \RR^{r \times r}_{\sym}\right\} \equalscolon V_{\sym}.
    \end{equation}
    Letting $\bV = \sum_{i = 1}^r \sigma_i\by_i\bz_i^\top$ for $\by_i \in \RR^r$ an orthonormal basis, $\bz_i \in \RR^N$ an orthonormal set, and $\sigma_i > 0$ be the singular decomposition, an orthonormal basis for $V_{\sym}$ is given by the $\frac{r(r + 1)}{2}$ vectors
    \begin{align}
      \bz_i \otimes \by_i &\text{ for } 1 \leq i \leq N, \\
      \frac{1}{\sqrt{\sigma_i^2 + \sigma_j^2}}\left(\sigma_i \bz_i \otimes \by_j + \sigma_j \bz_j \otimes \by_i\right) &\text{ for } 1 \leq i < j \leq N.
    \end{align}
\end{proposition}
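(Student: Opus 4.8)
The plan is to exhibit a single orthogonal decomposition of $\RR^{rN}$ into one- and two-dimensional subspaces that are invariant for \emph{both} $\bm I_N \otimes (\bV\bV^\top)$ and $(\bv\bv^\top)^{\ptop}$, and then read off everything block by block. First I would fix the singular decomposition $\bV = \sum_{i=1}^r \sigma_i \by_i\bz_i^\top$ as in the statement, with $\by_1, \dots, \by_r$ an orthonormal basis of $\RR^r$, $\bz_1, \dots, \bz_r$ an orthonormal set in $\RR^N$, and $\sigma_i > 0$ (here $\bV$ having full row rank $r$ is used), and extend $\bz_1, \dots, \bz_r$ to an orthonormal basis $\bz_1, \dots, \bz_N$ of $\RR^N$, so that $\{\bz_k \otimes \by_i : k \in [N],\, i \in [r]\}$ is an orthonormal basis of $\RR^{rN}$. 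In this basis, $\bm I_N \otimes (\bV\bV^\top) = \big(\sum_k \bz_k\bz_k^\top\big) \otimes \big(\sum_i \sigma_i^2 \by_i\by_i^\top\big)$ is diagonal, with $\bz_k \otimes \by_i$ an eigenvector of eigenvalue $\sigma_i^2$ (independent of $k$), while Proposition~\ref{prop:rank-one-pt} writes $(\bv\bv^\top)^{\ptop}$ in terms of the vectors $\bd_i = \bz_i \otimes \by_i$, $\bs_{ij}$, $\ba_{ij}$, all of which lie in $\mathrm{span}\{\bz_k \otimes \by_i : k, i \in [r]\}$.

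Next I would note that $\bD \colonequals \bm I_N \otimes (\bV\bV^\top) - (\bv\bv^\top)^{\ptop}$ is block-diagonal with respect to the orthogonal decomposition
\begin{equation*}
\RR^{rN} = \bigoplus_{i \in [r]} \RR(\bz_i \otimes \by_i) \;\oplus\; \bigoplus_{1 \leq i < j \leq r} \mathrm{span}\{\bz_i \otimes \by_j,\, \bz_j \otimes \by_i\} \;\oplus\; \bigoplus_{\substack{r < k \leq N \\ i \in [r]}} \RR(\bz_k \otimes \by_i),
\end{equation*}
and compute $\bD$ on each summand. On $\RR(\bz_i \otimes \by_i)$ both terms act as $\sigma_i^2$, so $\bD$ vanishes. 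On the ``extra'' lines $\RR(\bz_k \otimes \by_i)$ with $k > r$ the partial-transpose term vanishes, so $\bD$ acts as the positive scalar $\sigma_i^2$. On the plane $\mathrm{span}\{\bz_i \otimes \by_j, \bz_j \otimes \by_i\}$ with $i < j$, rewriting the $\bs_{ij},\ba_{ij}$ contributions in this ordered basis shows that $\bD$ restricts to $\left(\begin{smallmatrix} \sigma_j^2 & -\sigma_i\sigma_j \\ -\sigma_i\sigma_j & \sigma_i^2 \end{smallmatrix}\right)$, which is positive semidefinite of rank one with kernel spanned by $\sigma_i(\bz_i \otimes \by_j) + \sigma_j(\bz_j \otimes \by_i)$. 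Summing over blocks gives $\bD \succeq \bm 0$, which is the first claim, and identifies $\ker \bD$ as the span of the vectors $\bz_i \otimes \by_i$ ($i \in [r]$) together with the vectors $\sigma_i(\bz_i \otimes \by_j) + \sigma_j(\bz_j \otimes \by_i)$ ($i<j$); normalizing (the first are already unit, the second have norm $\sqrt{\sigma_i^2+\sigma_j^2}$) and using that distinct summands are orthogonal produces exactly the claimed orthonormal basis, of cardinality $r + \binom{r}{2} = \tfrac{r(r+1)}{2}$.

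Finally I would identify $\ker \bD$ with $V_{\sym}$. For symmetric $\bS$ with matrix $(c_{k\ell})$ in the basis $\{\by_i\}$ one computes $\vec(\bS\bV) = \sum_{i,k \in [r]} \sigma_i c_{ki}\, \bz_i \otimes \by_k$; specializing, $\vec\!\big(\tfrac{1}{\sigma_m}\by_m\by_m^\top \bV\big) = \bz_m \otimes \by_m$ and $\vec\!\big((\by_i\by_j^\top + \by_j\by_i^\top)\bV\big) = \sigma_i(\bz_i \otimes \by_j) + \sigma_j(\bz_j \otimes \by_i)$, so each basis vector of $\ker \bD$ found above lies in $V_{\sym}$, giving $\ker \bD \subseteq V_{\sym}$. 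For the reverse inclusion it suffices to note that $\bS \mapsto \vec(\bS\bV)$ is injective on $\RR^{r \times r}_{\sym}$ (since $\bS\bV = \bm 0$ forces $\bS\bV\bV^\top = \bm 0$ with $\bV\bV^\top$ invertible), so $\dim V_{\sym} = \tfrac{r(r+1)}{2} = \dim \ker \bD$ and the two subspaces coincide. The main obstacle is organizational rather than conceptual: one must carefully verify that the two matrices genuinely share the displayed block decomposition and keep the index bookkeeping straight, in particular distinguishing the ``interior'' directions $\bz_k \otimes \by_i$ with $k \leq r$, where $(\bv\bv^\top)^{\ptop}$ is active, from the ``extra'' ones with $k > r$, which are what make the inequality strict off $V_{\sym}$; once the decomposition is set up, each block is a one- or two-dimensional computation.
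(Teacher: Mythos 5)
Your proposal is correct and takes essentially the same approach as the paper: both pass to the SVD/Schmidt basis $\{\bz_k \otimes \by_i\}$, invoke Proposition~\ref{prop:rank-one-pt} to control $(\bv\bv^\top)^{\ptop}$, read off a spectral decomposition of $\bm I_N \otimes (\bV\bV^\top) - (\bv\bv^\top)^{\ptop}$, and finish the identification with $V_{\sym}$ by a dimension count against the injectivity of $\bS \mapsto \vec(\bS\bV)$. The only cosmetic differences are that you organize the spectral data as a block decomposition into invariant planes rather than directly rewriting $\bD$ as a sum of positive rank-one terms, and that you verify $\ker\bD \subseteq V_{\sym}$ by explicitly lifting each kernel vector to a symmetric $\bS$, whereas the paper verifies the reverse inclusion $V_{\sym} \subseteq \ker\bD$ by a direct computation on arbitrary $\vec(\bS\bV)$; both are one-line checks once the dimension count is in hand.
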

\noindent
We provide proofs of the preceding three Propositions in Appendix~\ref{app:partial-transpose}, giving more details on Proposition~\ref{prop:v-sym} since it is the only one of these results that appears to be original.

From the previous discussion of complementary slackness and Proposition~\ref{prop:v-sym}, we find that all eigenvectors with positive eigenvalue of $\bM$ must belong to $V_{\sym}$.
To obtain from this the statement of Lemma~\ref{lem:subspace-constraint}, first note that if $\bv^\top \bM \bv = N^2$ then by Proposition~\ref{prop:blocksym-structure} $\bM\bv = N\bv$, so $\bM = \bv\bv^\top + \bM^\prime$ for some $\bM^\prime \succeq \bm 0$.
Suppose that $\bw \in \RR^{rN}$ is an eigenvector of $\bM^\prime$ with eigenvalue $\lambda > 0$.
Then, $\bw \in V_{\sym}$ by the above reasoning, so $\bw = \vec(\bS\bV)$ for some $\bS \in \RR^{r \times r}_{\sym}$.
Also,
\begin{equation}
    \bm I_r = \bM_{[ii]} \succeq (\bv\bv^\top + \lambda\bw\bw^\top)_{[ii]} = \bv_i\bv_i^\top + \lambda \bS\bv_i\bv_i^\top \bS,
\end{equation}
and taking this as a quadratic form with $\bv_i$ shows that $\bv_i\bS \bv_i = 0$.
Since this holds for each $i \in [N]$, we obtain the conclusion of Lemma~\ref{lem:subspace-constraint}, that
\begin{equation}
    \bw \in V_{\sym}^\prime \colonequals \left\{ \vec(\bS\bV): \bS \in \RR^{r \times r}_{\sym}, \bv_i^\top \bS \bv_i = 0 \text{ for } i \in [N]\right\}.
\end{equation}

\subsection{Proof of Theorem~\ref{thm:sos-pataki}}

Suppose $\bX \in \fE_4^N$ with $\bX = \Gram(\bv_1, \dots, \bv_N)$, and $\bv_i \in \RR^r$ with $r = \rank(\bX)$.
Then if $\bV \in \RR^{r \times N}$ has the $\bv_i$ as its columns, $\bV$ is full-rank.
If $\bY \in \RR^{N^2 \times N^2}$ is any degree 4 pseudomoment matrix extending $\bX$, then there is $\bM \in \sB(N, r)$ with $\bv^\top \bM \bv = N^2$.
Suppose $r^\prime = \rank(\bM)$, then let us write the spectral decomposition $\bM = \bv\bv^\top + \sum_{m = 1}^{r^\prime - 1} \lambda_m\bw_m\bw_m^\top$ for some $\lambda_m > 0$.

By Lemma~\ref{lem:subspace-constraint}, $\bw_m \in V_{\sym}^\prime$.
Therefore, $\bw_m = \vec(\bS_m\bV)$ for some $\bS_m \in \RR^{r \times r}_{\sym}$ with $\bv_i^\top \bS_m \bv_i = 0$ for all $i \in [N], m \in [r^\prime - 1]$.
By \eqref{eq:M-Y-equiv-mx} from Theorem~\ref{thm:sos4}, we may therefore expand
\begin{align}
  \bY &= \tilde{\bv}\tilde{\bv}^\top + \sum_{m = 1}^{r^\prime - 1}\lambda_m\tilde{\bw}_m\tilde{\bw}_m^\top, \\
  (\tilde{\bv})_{(ij)} &= \la \bv_i, \bv_j \ra, \\
  (\tilde{\bw}_m)_{(ij)} &= \la \bS_m\bv_i, \bv_j\ra.
\end{align}
Thus, we simply have $\tilde{\bv} = \vec(\bX)$ and $\tilde{\bw}_m = \vec(\bV^\top \bS_m \bV)$.

The statement of Theorem~\ref{thm:sos-pataki} comes from combining this with the following previous result about the facial geometry of $\fE_2^N$.
\begin{proposition}[Theorem 1(a) of \cite{li:94}]
    \label{prop:li-tam}
    Let $\bX = \Gram(\bv_1, \dots, \bv_N) \in \fE_2^N$ for $\bv_1, \dots, \bv_N \in \SS^{r-1}$ having $\rank(\bX) = r$, and let $\bV \in \RR^{r \times N}$ have the $\bv_i$ as its columns, so that $\bX = \bV^\top \bV$.
    Then,
    \begin{align}
      \mathsf{pert}_{\fE_2^N}(\bX)
      &= \left\{ \bV^\top \bS \bV: \bS \in \RR^{r \times r}_{\sym} \right\} \cap \{ \bA \in \RR^{N \times N}: \diag(\bA) = \bm 0 \} \\
      &= \left\{ \bV^\top \bS \bV: \bS \in \RR^{r \times r}_{\sym}, \bv_i^\top \bS \bv_i = 0 \text{ for } i \in [N] \right\}.
    \end{align}
\end{proposition}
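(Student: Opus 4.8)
Here is the plan. First I would observe that the two right-hand sides denote the very same set: since $(\bV^\top\bS\bV)_{ii} = \bv_i^\top\bS\bv_i$ for each $i$, the constraint $\diag(\bV^\top\bS\bV) = \bm 0$ is identical to the constraint $\bv_i^\top\bS\bv_i = 0$ for all $i \in [N]$. So the content of the proposition is the identity $\mathsf{pert}_{\fE_2^N}(\bX) = W$, where $W \colonequals \{\bV^\top\bS\bV : \bS \in \RR^{r\times r}_{\sym},\ \diag(\bV^\top\bS\bV) = \bm 0\}$, which I would prove by two inclusions, using throughout that $\bV$ has full row rank $r$ (so $\bV\bV^\top$ is invertible and $\ker(\bX) = \ker(\bV^\top\bV) = \ker(\bV)$).

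For $W \subseteq \mathsf{pert}_{\fE_2^N}(\bX)$: given $\bA = \bV^\top\bS\bV \in W$ with $\bS \neq \bm 0$ (the case $\bS = \bm 0$ being trivial), I would write $\bX \pm t\bA = \bV^\top(\bm I_r \pm t\bS)\bV$. For $0 < t < \|\bS\|^{-1}$ the eigenvalues of $\bm I_r \pm t\bS$ are at least $1 - t\|\bS\| > 0$, so $\bX \pm t\bA \succeq \bm 0$; meanwhile $\diag(\bX \pm t\bA) = \diag(\bX) \pm t\diag(\bA) = \one$. Hence $\bX \pm t\bA \in \fE_2^N$ for all small $t > 0$, i.e.\ $\bA \in \mathsf{pert}_{\fE_2^N}(\bX)$.

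For $\mathsf{pert}_{\fE_2^N}(\bX) \subseteq W$: let $\bA$ satisfy $\bX \pm t\bA \in \fE_2^N$ for all small $t > 0$. Comparing the diagonals of $\bX$ and $\bX \pm t\bA$ gives $\diag(\bA) = \bm 0$ at once. The substantive step is to show $\ker(\bX) \subseteq \ker(\bA)$: for $\bu \in \ker(\bX)$ we have $\bu^\top\bX\bu = 0$, so from $\bX \pm t\bA \succeq \bm 0$ we get $\pm t\,\bu^\top\bA\bu = \bu^\top(\bX \pm t\bA)\bu \ge 0$ for both choices of sign, forcing $\bu^\top\bA\bu = 0$ and hence $\bu^\top(\bX + t\bA)\bu = 0$; since a psd matrix annihilates every vector on which it vanishes as a quadratic form, $(\bX + t\bA)\bu = \bm 0$, and therefore $\bA\bu = \bm 0$. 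Thus $\row(\bA) \subseteq \ker(\bV)^\perp = \row(\bV)$, and by symmetry of $\bA$ its column space also lies in $\row(\bV)$. Writing $\bP \colonequals \bV^\top(\bV\bV^\top)^{-1}\bV$ for the orthogonal projection onto $\row(\bV)$, this yields $\bA = \bP\bA\bP = \bV^\top\bS\bV$ with $\bS \colonequals (\bV\bV^\top)^{-1}\bV\bA\bV^\top(\bV\bV^\top)^{-1}$, which is symmetric because $\bA$ is; and $\bA \in W$ since $\diag(\bA) = \bm 0$.

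I expect the main obstacle to be precisely the step $\ker(\bX) \subseteq \ker(\bA)$ — morally the statement that an SDP-feasible perturbation of a psd matrix cannot leave that matrix's range, equivalently that $\bX$ and all its feasible perturbations lie on a common face of the psd cone. One could instead invoke the standard description of the faces of the psd cone, but the short quadratic-form argument above keeps the proof self-contained; everything else (the diagonal bookkeeping, the projection-matrix identity, and the equivalence of the two stated forms of the answer) is routine linear algebra.
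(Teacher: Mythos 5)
Your proof is correct. The paper does not actually prove this proposition --- it cites it as Theorem 1(a) of \cite{li:94} --- so there is no in-paper argument to compare against; what you have produced is a self-contained derivation that replaces the citation. Both directions are handled cleanly: the inclusion $W \subseteq \mathsf{pert}_{\fE_2^N}(\bX)$ by writing $\bX \pm t\bA = \bV^\top(\bm I_r \pm t\bS)\bV$ and keeping $t < \|\bS\|^{-1}$; and the substantive inclusion via the quadratic-form argument showing $\ker(\bX) \subseteq \ker(\bA)$ (a psd matrix annihilates any vector on which it vanishes as a quadratic form), from which $\row(\bA) \subseteq \row(\bV)$ and the representation $\bA = \bP\bA\bP = \bV^\top\bS\bV$ with $\bS = (\bV\bV^\top)^{-1}\bV\bA\bV^\top(\bV\bV^\top)^{-1}$ follow. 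As you note, this is essentially re-deriving the facial description of the psd cone restricted to the affine slice $\diag(\cdot) = \one$, which is how Li--Tam and Laurent--Poljak approach it; your quadratic-form step is a nice way to keep it elementary rather than citing the face structure of the psd cone as a black box.
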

\noindent
Therefore, continuing the reasoning above, we find that for each $m \in [r^\prime - 1]$, $\tilde{\bw}_m = \vec(\bA_m)$ for some $\bA_m \in \mathsf{pert}_{\fE_2^N}(\bX)$.
Hence, every eigenvector of $\bY - \vec(\bX)\vec(\bX)^\top$ having nonzero eigenvalue must lie in $\vec(\mathsf{pert}_{\fE_2^N}(\bX))$, establishing the first part of the result.

The second part of the result controls $\rank(\bY) \leq r^\prime$.
By the first part of the result,
\begin{equation}
    r^\prime \leq \dim\left(\mathsf{pert}_{\fE_2^N}(\bX)\right) + 1,
\end{equation}
so it suffices to compute the right-hand side.
Since $\bV$ is full-rank, the map $\bS \mapsto \bV^\top \bS \bV$ is injective, so this may be computed as
\begin{equation}
    \dim\left(\mathsf{pert}_{\fE_2^N}(\bX)\right) = \dim\left(\mathsf{span}\left(\{\bv_i\bv_i^\top\}_{i = 1}^N\right)^\perp\right) = \frac{r(r + 1)}{2} - \dim\left(\mathsf{span}\left(\{\bv_i\bv_i^\top\}_{i = 1}^N\right)\right).
\end{equation}
Since $\Gram(\bv_1\bv_1^\top, \dots, \bv_N\bv_N^\top) = \bX^{\odot 2}$, we equivalently have
\begin{equation}
    \dim\left(\mathsf{pert}_{\fE_2^N}(\bX)\right) = \frac{r(r + 1)}{2} - \rank(\bX^{\odot 2}),
\end{equation}
a previously known corollary of Proposition~\ref{prop:li-tam} used in \cite{li:94,laurent:96}.

The final part of the result concerns the special case where $\bX \in \fE_2^N$ is an extreme point, whereby $\dim(\mathsf{pert}_{\fE_2^N}(\bX)) = 0$.
Then, if $\bY$ is a degree 4 pseudomoment matrix extending $\bX$ we have $\rank(\bY) = r^\prime = 1$, so $\rank(\bX) = 1$ as well since $\bX$ is a minor of $\bY$.
Since $\bX \in \fE_2^N$, in fact $\bX = \bx\bx^\top$ for some $\bx \in \{ \pm 1\}^N$, and it is simple to check that the only possible degree 4 extension of rank one is then $\bY = (\bx \otimes \bx)(\bx \otimes \bx)^\top$.

\section{Examples from Equiangular Tight Frames: Theorem~\ref{thm:etf}}
\label{sec:etfs}

Before giving the proofs of our results on ETFs, we first point out a general convenience of working with the Gram matrices of UNTFs through Theorem~\ref{thm:sos4}.
Suppose $\bX = \Gram(\bv_1, \dots, \bv_N)$ and the $\bv_i \in \RR^r$ form a UNTF.
Let $\bV \in \RR^{r \times N}$ have the $\bv_i$ as its columns.
Suppose also that $\bM \in \sB(N, r)$ with $\bv^\top \bM \bv = N^2$.
Lemma~\ref{lem:subspace-constraint} then ensures that the eigenvectors of $\bM$ with positive eigenvalue lie in the subspace of vectors of the form $\vec(\bS\bV)$ for $\bS \in \RR^{r \times r}_{\sym}$.
In the case where $\bv_1, \dots, \bv_N$ form a UNTF, we show that in fact this mapping is, up to scaling, an isometry.

\begin{definition}
    For $\bV \in \RR^{r \times N}$, let us write $\sV_{\bV}: \RR^{r \times r}_{\sym} \to \RR^{rN}$ for the map $\sV_{\bV}(\bS) = \sqrt{\frac{r}{N}}\vec(\bS\bV)$.
    When the matrix $\bV$ is clear from context, we will drop the subscript $\bV$.
\end{definition}

\begin{proposition}
    \label{prop:lin-isom}
    Let $\bv_1, \dots, \bv_N \in \SS^{r - 1}$ form a UNTF and let $\bV \in \RR^{r \times N}$ have the $\bv_i$ as its columns.
    Then, the mapping $\sV = \sV_{\bV}$ is a linear isometry between $\RR^{r \times r}_{\sym}$ and $\{\vec(\bS\bV): \bS \in \RR^{r \times r}_{\sym}\} \subset \RR^{rN}$, if $\RR^{r \times r}_{\sym}$ is endowed with the Frobenius inner product $\la \bS, \bS^\prime \ra = \Tr(\bS\bS^\prime)$.
\end{proposition}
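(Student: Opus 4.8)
The plan is to verify directly that $\sV = \sV_{\bV}$ preserves inner products, from which linearity and the identification of its image are immediate. Linearity of $\bS \mapsto \vec(\bS\bV)$ is clear, and scaling by the nonzero constant $\sqrt{r/N}$ does not change the span, so the image of $\sV$ is exactly $\{\vec(\bS\bV): \bS \in \RR^{r \times r}_{\sym}\}$ as a set. Thus the only substantive claim is that $\langle \sV(\bS), \sV(\bS^\prime) \rangle = \Tr(\bS\bS^\prime)$ for all $\bS, \bS^\prime \in \RR^{r \times r}_{\sym}$; once this is shown, $\sV$ is in particular injective, hence a linear isomorphism onto its image that preserves the relevant inner products, i.e.\ a linear isometry.

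The key computation is a short manipulation using the UNTF property. First I would write, using the standard identity $\langle \vec(\bA), \vec(\bB)\rangle = \Tr(\bA^\top \bB)$,
\begin{equation}
    \langle \vec(\bS\bV), \vec(\bS^\prime\bV)\rangle = \Tr\big((\bS\bV)^\top \bS^\prime \bV\big) = \Tr\big(\bV^\top \bS^\top \bS^\prime \bV\big) = \Tr\big(\bS \bS^\prime \bV\bV^\top\big),
\end{equation}
where the last step uses that $\bS$ is symmetric and cyclicity of the trace. Then I would invoke the first characterization of a UNTF from the definition preceding Proposition~\ref{prop:etf-welch-bound}, namely $\bV\bV^\top = \sum_{i=1}^N \bv_i\bv_i^\top = \frac{N}{r}\bm I_r$, to conclude that this equals $\frac{N}{r}\Tr(\bS\bS^\prime)$. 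Multiplying by the factor $\big(\sqrt{r/N}\big)^2 = r/N$ coming from the two copies of the normalization constant in $\sV(\bS)$ and $\sV(\bS^\prime)$ gives exactly $\Tr(\bS\bS^\prime)$, as desired.

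There is essentially no obstacle here; the only point worth care is the cyclic rearrangement $\Tr(\bV^\top \bS\bS^\prime\bV) = \Tr(\bS\bS^\prime\bV\bV^\top)$ together with keeping track of which factor the symmetry of $\bS$ (as opposed to $\bS^\prime$) is used for, so that the computation is valid for all pairs of symmetric matrices and not merely on the diagonal $\bS = \bS^\prime$. After that, the normalization constant is exactly tuned so that the UNTF scalar $N/r$ cancels, which is the reason $\sV_{\bV}$ is defined with the prefactor $\sqrt{r/N}$.
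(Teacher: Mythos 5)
Your proof is correct and takes essentially the same route as the paper's: verify that $\sV$ preserves the Frobenius inner product by a trace computation and then invoke $\bV\bV^\top = \frac{N}{r}\bm I_r$. The only cosmetic difference is that you use the vectorization-trace identity directly, whereas the paper expands $\langle \sV(\bS), \sV(\bS^\prime)\rangle$ as a sum $\sum_i \bv_i^\top \bS\bS^\prime \bv_i$ before collecting it into a trace; both reduce to the same cancellation of the UNTF constant.
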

\begin{proof}
    To check that inner products are preserved, we compute:
    \begin{align}
      \la \mathcal{V}(\bS), \mathcal{V}(\bS^\prime) \ra
      &= \frac{r}{N}\sum_{i = 1}^N \la \bS \bv_i, \bS^\prime \bv_i \ra \nonumber \\
      &= \frac{r}{N}\sum_{i = 1}^N \bv_i^\top \bS \bS^\prime \bv_i \nonumber \\
      &= \Tr\left(\bS \bS^\prime \left(\frac{r}{N}\bV\bV^\top \right)\right) \nonumber \\
      &= \la \bS, \bS^\prime \ra.
    \end{align}
    Clearly $\sV$ is linear, injectivity follows from the $\bv_i$ forming a spanning set, and surjectivity follows from the definition of the target space.
\end{proof}

Similarly, Theorem~\ref{thm:sos4} shows that $\bY$ can be produced from $\bM$ by conjugating as
\begin{equation}
    \bY = (\bm I_N \otimes \bV)^\top \bM (\bm I_N \otimes \bV) = \frac{N}{r}\left(\bm I_N \otimes \sqrt{\frac{r}{N}}\bV\right)^\top \bM \left(\bm I_N \otimes \sqrt{\frac{r}{N}}\bV\right),
\end{equation}
where in the latter expression the matrix $\bm I_N \otimes \sqrt{\frac{r}{N}}\bV$ has orthonormal rows, so $\bY$ is also merely a scaled and rotated copy of $\bM$, embedded in a higher-dimensional space.
In particular, the spectrum of $\bY$ is the spectrum of $\bM$, scaled up by $\frac{N}{r}$.

\subsection{Proof of Theorem~\ref{thm:etf}}

In this section we will prove the necessary and sufficient condition for the Gram matrix of an ETF to lie in $\fE_4^N$.
Let $\bv_1, \dots, \bv_N \in \RR^r$ form an ETF, let $\bV \in \RR^{r \times N}$ have the $\bv_i$ as its columns, let $\bv = \vec(\bV)$ be the concatenation of $\bv_1, \dots, \bv_N$, and let $\bX = \bV^\top \bV = \Gram(\bv_1, \dots, \bv_N)$.
Then, our result is that $\bX \in \fE_4^N$ if and only if $N < \frac{r(r + 1)}{2}$ or $r = 1$.
If $r = 1$, then each $\bv_i$ is a scalar equal to $\pm 1$, so $\bX \in \fC^N$.
Thus, it suffices to restrict our attention to $r > 1$.

First, we recall a classical result on equiangular lines (not necessarily forming a tight frame) which shows that this result only excludes one extremal case.
We also include its elegant proof, since similar ideas will be involved in our argument.
\begin{proposition}[Gerzon Bound \cite{lemmens:91}]
    If $\bv_1, \dots, \bv_N \in \SS^{r-1}$ and $|\la \bv_i, \bv_j \ra| = \alpha < 1$ for all $i, j \in [N]$ with $i \neq j$, then $N \leq \frac{r(r + 1)}{2}$.
    \label{prop:etf-gerzon-bound}
\end{proposition}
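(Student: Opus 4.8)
The plan is to embed the rank-one projections $\bv_i\bv_i^\top$ into the $\frac{r(r+1)}{2}$-dimensional space $\RR^{r \times r}_{\sym}$ and show that they are linearly independent, which immediately forces $N \leq \frac{r(r+1)}{2}$.

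First I would compute, using the cyclic property of the trace, that under the Frobenius inner product
\begin{equation}
    \la \bv_i\bv_i^\top, \bv_j\bv_j^\top \ra = \Tr(\bv_i\bv_i^\top\bv_j\bv_j^\top) = \la \bv_i, \bv_j\ra^2,
\end{equation}
so that the Gram matrix of the family $\{\bv_i\bv_i^\top\}_{i=1}^N$ is exactly $\bX^{\odot 2}$, where $\bX = \Gram(\bv_1, \dots, \bv_N)$. Since $\|\bv_i\|_2 = 1$ and $|\la\bv_i,\bv_j\ra| = \alpha$ for $i \neq j$, this Gram matrix has the explicit form $\bX^{\odot 2} = (1 - \alpha^2)\bm I_N + \alpha^2\one_N\one_N^\top$.

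Next I would observe that $\bX^{\odot 2}$ is positive definite: its eigenvalues are $1 - \alpha^2$ with multiplicity $N - 1$ and $1 - \alpha^2 + \alpha^2 N$ with multiplicity $1$, both strictly positive because $\alpha < 1$. A positive definite Gram matrix certifies that the underlying vectors $\bv_1\bv_1^\top, \dots, \bv_N\bv_N^\top$ are linearly independent in $\RR^{r \times r}_{\sym}$. Since $\dim \RR^{r \times r}_{\sym} = \frac{r(r+1)}{2}$, we conclude $N \leq \frac{r(r+1)}{2}$.

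There is no real obstacle here — the argument is entirely elementary once the Hadamard-square identity for the Gram matrix is in hand; the only point requiring a moment's care is the passage from positive definiteness of $\bX^{\odot 2}$ to linear independence of the projections, and the elementary eigenvalue computation for the rank-one-plus-scalar matrix $(1-\alpha^2)\bm I_N + \alpha^2\one_N\one_N^\top$. I note that the strictness $\alpha < 1$ is exactly what is used, and that equality $N = \frac{r(r+1)}{2}$ requires the $\bv_i\bv_i^\top$ to \emph{span} $\RR^{r\times r}_{\sym}$, which will be relevant when this bound is revisited for maximal ETFs later in the section.
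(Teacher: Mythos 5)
Your proposal is correct and follows essentially the same route as the paper: compute the Gram matrix $\Gram(\bv_1\bv_1^\top, \dots, \bv_N\bv_N^\top) = (1-\alpha^2)\bm I_N + \alpha^2\one\one^\top$, observe it is nonsingular (you add the explicit eigenvalue computation, which the paper leaves implicit), and conclude linear independence inside $\RR^{r\times r}_{\sym}$. No gap; this is exactly the paper's argument with slightly more detail.
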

\begin{proof}
    For all $i \neq j$, $\la \bv_i\bv_i^\top, \bv_j\bv_j^\top \ra = \alpha^2$.
    Thus,
    \begin{equation}
        \Gram(\bv_1\bv_1^\top, \dots, \bv_N\bv_N^\top) = (1 - \alpha^2) \bm I_N + \alpha^2 \one\one^\top,
    \end{equation}
    which is non-singular.
    The $\bv_i\bv_i^\top$ are then linearly independent, so $N \leq \dim(\RR^{r \times r}_{\sym}) = \frac{r(r + 1)}{2}$.
\end{proof}
\noindent
By Theorem~\ref{thm:sos-pataki}, the negative direction of Theorem~\ref{thm:etf} immediately follows: if $N = \frac{r(r + 1)}{2}$, then the $\bv_i\bv_i^\top$ span $\RR^{r \times r}_{\sym}$, so by Proposition~\ref{prop:li-tam} $\bX$ is an extreme point of $\fE_2^N$, thus $\bX$ cannot belong to $\fE_4^N$ unless $\rank(\bX) = 1$, which is a contradiction if $r > 1$.

The positive direction with $r > 1$ is the more difficult part of the result.
We proceed by explicitly constructing $\bM \in \sB(N, r)$ with $\bv^\top \bM \bv = N^2$.
The construction is optimistic: we consider the simplest possible choice for $\bM$ respecting the constraint of Lemma~\ref{lem:subspace-constraint}.
The Lemma forces $\bM = \bv\bv^\top + \bM^\prime$ where $\bM^\prime \succeq 0$ with all of its eigenvectors with positive eigenvalue lying in the subspace $V_{\sym}^\prime$.
We then simply choose $\bM^\prime$ to equal a constant multiple of $\bP_{V_{\sym}^\prime}$.
Choosing the constant factor such that $\Tr(\bM) = rN$, we obtain the candidate
\begin{equation}
    \label{eq:etf-M-def}
    \bM \colonequals \bv\bv^\top + \frac{(r - 1)N}{\frac{r(r + 1)}{2} - N} \bP_{V_{\sym}^\prime}.
\end{equation}
If we could show that $\bM_{[ii]} = \bm I_r$ and $\bM_{[ij]}^\top = \bM_{[ij]}$ for all $i, j \in [N]$, then the proof would be complete.

Surprisingly, the naive construction \eqref{eq:etf-M-def} does satisfy these properties.
This may be verified by calculating $\bP_{V_{\sym}^\prime}$ explicitly, a calculation we perform in detail in Appendix~\ref{app:projectors} but outline briefly here.
Recall that
\begin{equation}
    V_{\sym}^\prime \colonequals \left\{ \vec(\bS\bV) : \bS \in \RR^{r \times r}_{\sym}, \bv_i^\top \bS \bv_i = 0 \text{ for } i \in [N] \right\} \subset \RR^{rN}.
\end{equation}
The basic idea is then to write $\bP_{V_{\sym}^\prime}\by$ for some $\by \in \RR^{rN}$ as $\vec(\bS\bV)$ for $\bS$ solving the least-squares optimization problem for the orthogonal projection of $\by$.
We then solve this optimization explicitly with Lagrange multipliers.
Determining the Lagrange multipliers in turn amounts to inverting the matrix $\bX^{\odot 2}$.
Fortunately, for an ETF, as noted previously in the introduction and in the proof of Proposition~\ref{prop:etf-gerzon-bound}, this matrix has a simple structure, making the calculation tractable.
In this way, we obtain formulae for the blocks of $\bP_{V_{\sym}^\prime}$ (see Corollary~\ref{cor:v-sym-prime-proj-etf}), after which it is straightforward to check that $\bM \in \sB(N, r)$.

Finally, using the relation \eqref{eq:M-Y-equiv} between the blocks of $\bM$ and the degree 4 pseudomoments, we recover the elegant formula for the degree 4 pseudomoments:
\begin{equation}
    \label{eq:etf-pe-values}
    Y_{(ij)(k\ell)} = \frac{\frac{r(r - 1)}{2}}{\frac{r(r + 1)}{2} - N}(X_{ij}X_{k\ell} + X_{ik}X_{j\ell} + X_{i\ell}X_{jk}) - \frac{r^2\left(1 - \frac{1}{N}\right)}{\frac{r(r + 1)}{2} - N}\sum_{m = 1}^N X_{im}X_{jm}X_{km}X_{\ell m}.
\end{equation}
This derivation is a rather egregious instance of ``bookkeeping for a miracle'' \cite{clark-qr}, and it certainly remains an open question to provide an intuitive explanation for why any ETF Gram matrices ought to belong to $\fE_{4}^N$ at all, or for the structure of the remarkably symmetric formula \eqref{eq:etf-pe-values}.
We remark additionally that, by the comments at the beginning of this section, the spectrum of $\bY$ described by \eqref{eq:etf-pe-values} is the same as the spectrum of $\bM$ with a constant scaling, and thus is also simple: $\bY$ equals the rank one matrix $\vec(\bX)\vec(\bX)^\top$ plus a constant multiple of the projection matrix onto the subspace $\vec(\mathsf{pert}_{\fE_2^N}(\bX))$.

\section{Applications}

\subsection{\Schlafli\ Inequalities: Theorem~\ref{thm:max-etf-7-ineqs}}

In this section we will describe the computer-assisted verification of the inequalities \eqref{eq:max-etf-7-ineqs} and some ancillary results.
First, we review a connection between ETFs and strongly regular graphs (SRGs).
(In fact, there are two distinct correspondences between ETFs and SRGs: the one we will use applies to arbitrary ETFs and is described in \cite{fickus:15}, while the other applies only to ETFs with a certain additional symmetry and is described in \cite{fickus:16:centroidal}.)

\begin{definition}
    A graph $G = (V, E)$ is a \emph{strongly regular graph with parameters $(v, k, \lambda, \mu)$}, abbreviated $\srg(v, k, \lambda, \mu)$, if $|V| = v$, $G$ is $k$-regular, every $x, y \in V$ that are adjacent have $\lambda$ common neighbors, and every $x, y \in V$ that are not adjacent have $\mu$ common neighbors.
\end{definition}

\begin{proposition}[Theorem 3.1 of \cite{fickus:15}]
    Let $\bv_1, \dots, \bv_N \in \RR^r$ form an ETF with $N > r$, suppose that for all $i \in [N] \setminus \{1\}$ we have $\la \bv_1, \bv_i \ra > 0$, and let $\bX = \Gram(\bv_1, \dots, \bv_N)$.
    Define the graph $G$ on vertices in $[N] \setminus \{1\}$ where $i$ and $j$ are adjacent if and only if $\la \bv_i, \bv_j \ra > 0$.
    Then, $G$ is an $\srg(v, k, \lambda, \mu)$ with parameters
    \begin{align}
      v &= N - 1, \\
      k &= \frac{N}{2} - 1 + \left(\frac{N}{2r} - 1\right)\sqrt{\frac{r(N - 1)}{N - r}}, \\
      \mu &= \frac{k}{2}, \\
      \lambda &= \frac{3k - v - 1}{2}.
    \end{align}
\end{proposition}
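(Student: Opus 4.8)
The plan is to exploit the rigid spectral structure of an ETF Gram matrix, reducing the claim to the classical fact that a Seidel matrix with only two eigenvalues has strongly regular ``descendants''. First I would record that, since $\bv_1,\dots,\bv_N$ form an ETF of coherence $\alpha = \sqrt{(N-r)/(r(N-1))}$ (Proposition~\ref{prop:etf-welch-bound}), we may write $\bX = \bm I_N + \alpha\bS$, where $\bS$ is the symmetric \emph{Seidel matrix} with $S_{ii} = 0$ and $S_{ij} = \sgn\la\bv_i,\bv_j\ra \in \{\pm1\}$ for $i \ne j$. Since an ETF is a UNTF, the eigenvalues of $\bX$ are $N/r$ with multiplicity $r$ and $0$ with multiplicity $N-r$, so $\bS = \alpha^{-1}(\bX - \bm I_N)$ has the two \emph{distinct} eigenvalues
\[
    \theta_1 = \frac{1}{\alpha}\left(\frac{N}{r}-1\right), \qquad \theta_2 = -\frac{1}{\alpha},
\]
with multiplicities $r$ and $N-r$. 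A direct computation with $\alpha^{-2} = r(N-1)/(N-r)$ gives $\theta_1\theta_2 = -(N-1)$ and $\theta_1+\theta_2 = \frac{1}{\alpha}\left(\frac{N}{r}-2\right)$. As $\bS$ is symmetric with exactly two eigenvalues, its minimal polynomial is $(t-\theta_1)(t-\theta_2)$, whence
\[
    \bS^2 = (\theta_1+\theta_2)\bS + (N-1)\bm I_N.
\]

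Next I would use the hypothesis $\la\bv_1,\bv_i\ra > 0$ for all $i \ne 1$, which says precisely that the first row and column of $\bS$ are all ones off the diagonal. Deleting that row and column leaves a $v \times v$ matrix, $v \colonequals N-1$, equal to $\bS' \colonequals 2\bA - \bm J_v + \bm I_v$, where $\bA$ is the adjacency matrix of $G$ and $\bm J_v \colonequals \one_v\one_v^\top$ (this is merely the definition of $G$ rewritten: $S'_{ij} = +1$ iff $i \sim j$). The quadratic identity for $\bS$ then gives two things. Applied to the standard basis vector $\be_1$ and compared in the last $v$ coordinates, it yields $\bS'\one_v = (\theta_1+\theta_2)\one_v$; substituting $\bS' = 2\bA - \bm J_v + \bm I_v$ this reads $2\bA\one_v = (\theta_1+\theta_2+v-1)\one_v$, so $G$ is regular of degree
\[
    k = \tfrac12(\theta_1+\theta_2+N-2) = \frac{N}{2}-1 + \left(\frac{N}{2r}-1\right)\sqrt{\frac{r(N-1)}{N-r}},
\]
which is the claimed value. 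Restricted to the $v\times v$ principal block indexed by $\{2,\dots,N\}$, and keeping track of the contribution routed through the deleted vertex, $(\bS^2)_{ij} = S_{i1}S_{1j} + (\bS'^2)_{ij} = 1 + (\bS'^2)_{ij}$, the same identity becomes
\[
    \bS'^2 = (\theta_1+\theta_2)\bS' + (N-1)\bm I_v - \bm J_v.
\]

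Finally I would substitute $\bS' = 2\bA - \bm J_v + \bm I_v$ into this last relation and expand using $\bA\bm J_v = \bm J_v\bA = k\bm J_v$ and $\bm J_v^2 = v\bm J_v$; collecting the coefficients of $\bm I_v$, $\bA$, $\bm J_v$ and eliminating $\theta_1+\theta_2$ via $\theta_1+\theta_2 = 2k-N+2$ produces an identity of the form
\[
    \bA^2 = \left(k - \tfrac k2\right)\bm I_v + \left(\lambda - \tfrac k2\right)\bA + \tfrac k2\,\bm J_v, \qquad \lambda \colonequals \frac{3k-N}{2} = \frac{3k-v-1}{2}.
\]
Reading off the $(i,j)$ entry shows that adjacent pairs of vertices have $\lambda$ common neighbours and non-adjacent pairs have $\mu \colonequals \tfrac k2$ common neighbours, so $G$ is an $\srg(v,k,\lambda,\mu)$ with exactly the parameters in the statement.

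The individual computations are routine linear algebra; the one step that needs care — and essentially the only content beyond the spectral setup — is the passage from the quadratic relation for the full Seidel matrix $\bS$ to the one for the adjacency matrix $\bA$ of the vertex-deleted graph, where the cross terms through vertex $1$ must be tracked exactly: they convert the scalar term $N-1$ into the $-\bm J_v$ term above, which is in turn what forces $\mu = k/2$ rather than some larger value. A minor preliminary point is to check that $\bS$ genuinely has two \emph{distinct} eigenvalues, so that the degree-two minimal polynomial identity is available; this holds whenever $N > r$, the case $r = 1$ being degenerate (then $\bX = \one\one^\top$ and $G$ is complete).
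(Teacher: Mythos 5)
The paper does not prove this statement at all; it cites it as Theorem~3.1 of Fickus--Jasper--Mixon--Peterson, so there is no ``paper's own proof'' to compare against. Your blind proof is correct, and it is in the same spirit as the classical Seidel/two-graph argument that underlies the cited result (an ETF Gram matrix $\bX = \bm I + \alpha\bS$ has two eigenvalues, hence the Seidel matrix $\bS$ has two eigenvalues, hence the ``descendant'' obtained by switching so that one vertex is joined to all others and then deleting it is strongly regular). I checked the computations line by line: $\theta_1\theta_2 = -(N-1)$, the quadratic $\bS^2 = (\theta_1+\theta_2)\bS + (N-1)\bm I_N$, the identification $\bS' = 2\bA - \bm J_v + \bm I_v$, the regularity $2\bA\one_v = (\theta_1+\theta_2 + v - 1)\one_v$ giving the claimed $k$, the cross-term accounting $(\bS^2)_{ij} = 1 + (\bS'^2)_{ij}$ giving $\bS'^2 = (\theta_1+\theta_2)\bS' + (N-1)\bm I_v - \bm J_v$, and finally the expansion to $\bA^2 = \frac{k}{2}\bm I_v + (k - \frac{N}{2})\bA + \frac{k}{2}\bm J_v$, which is the SRG identity with $\mu = \frac{k}{2}$ and $\lambda - \mu = k - \frac{N}{2}$, i.e.\ $\lambda = \frac{3k-N}{2} = \frac{3k-v-1}{2}$. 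All correct. You also rightly flag the two delicate points: the need for $\theta_1 \neq \theta_2$ (guaranteed by $N>r$ since $\theta_1 > 0 > \theta_2$), and the careful routing of the $k=1$ summand when restricting the quadratic identity to the $v\times v$ block, which is precisely what produces the $-\bm J_v$ term and forces $\mu = k/2$.
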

\noindent
Note that the assumption that $\la \bv_1, \bv_i \ra > 0$ for all $i \neq 1$ is not a substantial restriction, since any vector in an ETF may be negated to produce another essentially equivalent ETF.

In our case, an ETF on 28 vectors in $\RR^7$ corresponds to an $\srg(27, 16, 10, 8)$.
By the result of \cite{seidel:91}, this graph is unique, so we may take it by definition to be the \Schlafli\ graph (a more natural geometric description is given in the previous reference).
Consequently, since by negating some vectors every ETF can be put into the ``canonical'' form where $\la \bv_1, \bv_i \ra > 0$ for all $i \neq 1$, we obtain the following uniqueness result.
\begin{proposition}
    Let $\bv_1, \dots, \bv_{28}$ and $\bw_1, \dots, \bw_{28}$ be two ETFs in $\RR^7$.
    Then, there exist signs $1 = s_1, s_2, \dots, s_{28} \in \{\pm 1\}$ and $\bQ \in \sO(7)$ such that $\bw_i = s_i\bQ \bv_i$ for each $i \in [28]$.
\end{proposition}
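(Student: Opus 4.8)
The plan is to deduce the statement from the two facts already in hand from the discussion above: that the strongly regular graph attached via the preceding Proposition to an ETF of $28$ vectors in $\RR^7$ is the (unique) \Schlafli\ graph, and that every ETF can be put in the ``canonical'' form in which $\la \bv_1, \bv_i \ra > 0$ for all $i \neq 1$. The extra ingredient I need is that, for an ETF in this canonical form, the associated graph determines the Gram matrix \emph{exactly}; geometric rigidity then follows from the elementary fact that two spanning families of vectors in $\RR^7$ with the same Gram matrix differ by an element of $\sO(7)$.

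First I would normalize. Since negating any vector of an ETF again produces an ETF, choose signs $\sigma_1, \dots, \sigma_{28}$ and $\tau_1, \dots, \tau_{28}$, with $\sigma_1 = \tau_1 = 1$, so that $\bv_i' \colonequals \sigma_i \bv_i$ and $\bw_i' \colonequals \tau_i \bw_i$ satisfy $\la \bv_1', \bv_i' \ra > 0$ and $\la \bw_1', \bw_i' \ra > 0$ for all $i \neq 1$. An ETF of $28$ vectors in $\RR^7$ has coherence $\alpha = \sqrt{(28 - 7)/(7 \cdot 27)} = \tfrac13$, so the Gram matrix $\bX_1 \colonequals \Gram(\bv_1', \dots, \bv_{28}')$ has $(\bX_1)_{ii} = 1$, $(\bX_1)_{1i} = \tfrac13$ for $i \neq 1$, and $(\bX_1)_{ij} \in \{\pm\tfrac13\}$ for distinct $i, j \geq 2$; hence $\bX_1$ is determined by the graph $G_1$ on $\{2, \dots, 28\}$ in which $i \sim j$ iff $(\bX_1)_{ij} > 0$, and likewise $\bX_2 \colonequals \Gram(\bw_1', \dots, \bw_{28}')$ is determined by the analogous graph $G_2$. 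By the preceding Proposition and the uniqueness of the \Schlafli\ graph, $G_1 \cong G_2$; fix an isomorphism $G_1 \to G_2$ and extend it by $1 \mapsto 1$ to a permutation $\pi$ of $[28]$. Since $\bX_1$ and $\bX_2$ are recovered from $G_1$ and $G_2$ by the same recipe and $\pi$ carries $G_1$ onto $G_2$ while fixing the distinguished vertex, it follows that $\Gram(\bw_{\pi(1)}', \dots, \bw_{\pi(28)}') = \bX_1$. Now $(\bv_i')_{i=1}^{28}$ and $(\bw_{\pi(i)}')_{i=1}^{28}$ are families of $28$ unit vectors spanning $\RR^7$ with equal Gram matrices, so there is $\bQ \in \sO(7)$ with $\bQ \bv_i' = \bw_{\pi(i)}'$ for every $i$. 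Relabeling $\bw_1, \dots, \bw_{28}$ along $\pi$ --- harmless, since an ETF is a collection of vectors with no intrinsic ordering --- and unwinding $\bv_i' = \sigma_i \bv_i$, $\bw_i' = \tau_i \bw_i$ gives $\bw_i = s_i \bQ \bv_i$ with $s_i \colonequals \sigma_i \tau_{\pi(i)} \in \{\pm 1\}$ and $s_1 = \sigma_1 \tau_1 = 1$, which is the claimed form.

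The only step I expect to require genuine care is the assertion that, for a canonically oriented ETF, the associated graph determines the full Gram matrix: this is exactly where the two-valued structure $|X_{ij}| \in \{1, \tfrac13\}$ of an ETF Gram matrix, together with the sign normalization at the distinguished vertex $1$, is used, and it is what upgrades the purely combinatorial uniqueness of the \Schlafli\ graph to the geometric rigidity in the statement. The remaining pieces --- the sign bookkeeping and the passage from equal Gram matrices to an orthogonal map --- are routine linear algebra.
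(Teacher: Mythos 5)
Your proof is correct and follows the same route the paper takes (and merely sketches): pass to the canonical orientation, invoke the ETF--SRG correspondence together with Seidel's uniqueness of the $\srg(27,16,10,8)$, and upgrade the resulting graph isomorphism to equality of Gram matrices and hence to an orthogonal map between the two spanning families. The one point worth flagging is that your argument establishes the statement only up to a permutation of the $\bw_i$ (the one realizing the graph isomorphism), which is weaker than the literally indexed claim; but the literal claim cannot hold as written --- taking $\bw_i = \bv_{\pi(i)}$ for an arbitrary permutation $\pi$ would force every permutation to be a signed orthogonal symmetry of the ETF, which is false --- so the up-to-reordering reading you adopt is the intended one, and it is all that the paper's subsequent application (fixing a single ETF and checking the inequality for it) requires.
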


Since if $\bX \in \fE_4^N$ then $\bD\bX\bD \in \fE_4^N$ for any $\bD = \diag(\bd)$ with $\bd \in \{\pm 1\}^N$, it suffices to fix a single ETF of 28 vectors in $\RR^7$ and check \eqref{eq:max-etf-7-ineqs}, and the result will follow for all ETFs of the same dimensions.
Thus, let us fix $\bv_1, \dots, \bv_{28} \in \RR^7$ forming an ETF with $\la \bv_1, \bv_i \ra > 0$ for all $i \neq 1$, and let $\bZ = \Gram(\bv_1, \dots, \bv_N)$.
Let $G$ be the graph on $[28]$ where $i$ and $j$ are adjacent if $\la \bv_i, \bv_j \ra > 0$, so that $G$ is the \Schlafli\ graph with one extra vertex added that is attached to every other vertex.
We will write $G|_{S}$ for the subgraph induced by $G$ on the set of vertices $S$.

We show \eqref{eq:max-etf-7-ineqs} by producing a $\bm 0 \preceq \bA \in \RR^{N^2 \times N^2}$ such that for any $\bY$ a degree 4 pseudomoment matrix extending some $\bX$ a degree 2 pseudomoment matrix,
\begin{equation}
    0 \leq \la \bA, \bY \ra = 112 - \sum_{1 \leq i < j \leq 28} \sgn(Z_{ij})X_{ij}.
    \label{eq:schlafli-pf-1}
\end{equation}
The construction of $\bA$ is based on studying the results of numerical experiments.
We identify the constants appearing in $\bA$ as
\begin{align}
  \gamma_1 &\colonequals \frac{1}{126}, \\  
  \gamma_2 &\colonequals \frac{1}{36}, \\ 
  \kappa_1 &\colonequals \frac{2}{9}, \\ 
  \kappa_2 &\colonequals \frac{1}{28}. 
\end{align}
With this, we define
\begin{align}
    A_{(ij)(k\ell)} &\colonequals \left\{
        \begin{array}{lcl}
          0 & : & |\{i, j, k, \ell\}| = 4, \\
          -\sgn(Z_{k\ell})\gamma_1 & : & i = j, k \neq \ell, \\
          \gamma_2 & : & i = k, j \neq \ell, |E(G|_{\{i, j, \ell\}})| = 0, \\
          \gamma_2 & : & i = k, j \neq \ell, |E(G|_{\{i, j, \ell\}})| = 2, i \sim j, i \sim \ell, \\
          -\gamma_2 & : & i = k, j \neq \ell, |E(G|_{\{i, j, \ell\}})| = 2, j \sim \ell, \\
          0 & : & i = k, j \neq \ell, |E(G|_{\{i, j, \ell\}})| \in \{1, 3\}, \\
          -\sgn(Z_{i\ell})\gamma_1 & : & i = j = k, i \neq \ell, \\
          \kappa_1 & : & i = k, j = \ell, i \neq j, \\
          \kappa_2 & : & i = j, k = \ell.
        \end{array}\right. \\
  A_{(i_1i_2)(i_3i_4)} &= A_{(i_{\pi(1)}i_{\pi(2)})(i_{\pi(3)}i_{\pi(4)})} \text{ for } \bm i \in [N]^4, \pi \in \mathsf{Sym}(4).
\end{align}
We then perform a computer verification that $\bA \succeq \bm 0$ using the \texttt{SageMath} software package for symbolic calculation of a Cholesky decomposition.
Verifying that the equality of \eqref{eq:schlafli-pf-1} holds is straightforward by counting the occurrences of various terms in $\la \bA, \bX \ra$.
Accompanying code for reproducing the verification is available online.\footnote{See the second author's webpage at \url{http://www.kunisky.com/publications/deg-4-elliptope/}.}
Of course, this proof technique is rather unsatisfying, and it is an open problem to provide a more principled description of $\bA$ and a conceptual proof of its positive semidefiniteness (both for this specific case and for the general case of maximal ETFs for any dimensions they may exist in).

\subsection{Complexity of Parity Inequalities}

In this section, we give the straightforward argument behind our proof of Corollary~\ref{cor:laurent}, a partial reproduction of the result of Laurent given in Proposition~\ref{prop:laurent}.
The matrix $\bX^{(N)}$ described there is the Gram matrix of the following type of ETF.

\begin{definition}
    The \emph{simplex ETF} with parameter $N \geq 3$ is an ETF of $N$ vectors in $\RR^r$ with $r = N - 1$, whose vectors point to the vertices of an equilateral simplex whose barycenter lies at the origin and whose vertices are unit distance from this barycenter.
    The coherence of the simplex ETF is $\alpha = \frac{1}{r}$, and the inner product of any two distinct vectors is $-\alpha$; that is, the Gram matrix $\bX^{(N)}$ is
    \begin{equation}
        \bX^{(N)} = \left(1 + \frac{1}{N - 1}\right)\bm I_{N} - \frac{1}{N - 1}\one\one^\top.
    \end{equation}
\end{definition}
\noindent
When $N = 3$ and $r = 2$, then the simplex ETF is maximal, so $\bX^{(3)} \notin \fE_4^3$, but for all $N > 3$ we do have $\bX^{(N)} \in \fE_4^N$, and the extending degree 4 pseudomoment matrix may be computed directly from \eqref{eq:etf-pe-values} (especially simple in this case since the terms $X_{ij}$ appearing in the summations only take on two different values), completing the proof of the Corollary.

We remark that, in our previous results on ETFs, the only technical calculation was that of the projection matrix $\bP_{V^\prime_{\sym}}$, and the only particularly novel idea required was the optimistic construction of the Gram vector witness \eqref{eq:etf-M-def}.
In contrast, the original argument of \cite{laurent:03} uses some rather powerful machinery from the general theory of association schemes and analytic identities for hypergeometric functions.
We thus hope that our approach can be extended to view the higher-degree pseudomoment matrices used for the full result of Proposition~\ref{prop:laurent} as Gram matrices as well, replacing these technicalities with simpler considerations of the geometry of the simplex ETFs.

\section*{Acknowledgements}

We thank Jess Banks, Nicolas Boumal, Didier Henrion, Aida Khajavirad, Jean-Bernard Lasserre, Dustin Mixon, Cristopher Moore, and participants in the 2018 Princeton Day of Optimization for useful discussions.

\bibliographystyle{plain}
\bibliography{main}

\clearpage
\appendix

\section{Pseudomoment Reductions for Sum-of-Squares over $\{\pm 1\}^N$}
\label{app:sos-reductions}

In this appendix, we explain some standard reductions for the degree $d$ SOS relaxation of the problem
\begin{equation}
    \M(\bW) = \max_{\bx \in \{\pm 1\}^N}\bx^\top \bW \bx = \max_{\substack{\bx \in \RR^N \\ x_i^2 - 1 = 0 \text{ for } i \in [N]}} \sum_{i = 1}^N \sum_{j = 1}^N W_{ij}x_ix_j.
\end{equation}
The second expression above writes $\M(\bW)$ as a polynomial optimization problem, so the standard machinery of SOS optimization (see e.g.\ \cite{lasserre:01,laurent:09}) may be applied to formulate the degree $d$ relaxation.
We first describe the decision variable of this relaxation.

\begin{definition}
    \label{def:complete-pm}
    Let $d$ be an even positive integer.
    Then, $M^{(d)} \subset \RR^{N^{\leq d / 2} \times N^{\leq d / 2}}$ is the set of \emph{degree $d$ complete pseudomoment matrices}, consisting of $\bZ$ whose row and column indices we identify with the set $[N]^{\leq d / 2}$ ordered first by ascending length and then lexicographically and satisfying the following properties.
    \begin{enumerate}
    \item $\bZ \succeq \bm 0$.
    \item $Z_{\bs \bm t}$ depends only on $\odd(\bs \circ \bm t)$.
    \item $Z_{\bs \bm t} = 1$ whenever $\odd(\bs \circ \bm t) = \emptyset$.
    \end{enumerate}
\end{definition}
\noindent
We then define the usual formulation of the degree $d$ SOS relaxation of $\M(\bW)$ in the following way.

\begin{definition}
    Let $d$ be an even positive integer.
    The \emph{degree $d$ SOS relaxation of $\M(\bW)$} is the optimization problem
    \begin{equation}
        \SOS_d(\bW) \colonequals \max_{\bZ \in M^{(d)}} \sum_{i = 1}^N \sum_{j = 1}^N W_{ij} Z_{(i)(j)},
        \label{eq:sos-d}
    \end{equation}
    where $(i) \in [N]$ and $(j) \in [N]$ are interpreted as strings of length 1.
\end{definition}

The result we will prove in this appendix is that the pseudomoment matrices of Definition~\ref{def:complete-pm} can be truncated to just the minor indexed by $[N]^{d/2} \times [N]^{d/2}$ without affecting the optimization problem \eqref{eq:sos-d}.
First, we make the simple observation that, by Condition 2 of Definition~\ref{def:complete-pm}, the objective function of $\SOS_d(\bW)$ may be rewritten in terms of this minor.

\begin{definition}
    For any $N \geq 1$, let $\be_k \in [N]^k$ be the string of length $k$ with all entries equal to the symbol $1 \in [N]$.
\end{definition}
\begin{proposition}
    For each $d$ an even positive integer,
    \begin{equation}
        \SOS_d(\bW) = \max_{\bZ \in M^{(d)}} \sum_{i = 1}^N \sum_{j = 1}^N W_{ij} Z_{(\be_{d / 2 - 1} \circ (i))(\be_{d / 2 - 1} \circ (j))}.
    \end{equation}
\end{proposition}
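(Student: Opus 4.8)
The plan is to show that the two objective functions agree pointwise on the feasible set $M^{(d)}$, after which equality of the two maxima is immediate since the maximization is over the same set. So fix $\bZ \in M^{(d)}$ and $i, j \in [N]$; the task reduces to comparing $Z_{(\be_{d/2-1} \circ (i))(\be_{d/2-1} \circ (j))}$ with $Z_{(i)(j)}$, and by Condition 2 of Definition~\ref{def:complete-pm} it suffices to compare the sets $\odd\big((\be_{d/2-1} \circ (i)) \circ (\be_{d/2-1} \circ (j))\big)$ and $\odd\big((i) \circ (j)\big)$.

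First I would observe that the string $(\be_{d/2-1} \circ (i)) \circ (\be_{d/2-1} \circ (j))$ is obtained from the string $(i) \circ (j)$ simply by inserting $2(d/2 - 1) = d - 2$ extra copies of the symbol $1 \in [N]$. Since $d$ is even, $d - 2$ is even, so this insertion changes the number of occurrences of the symbol $1$ by an even amount and leaves the number of occurrences of every other symbol unchanged. Hence the parity of the multiplicity of each symbol is preserved, i.e.
\[
\odd\big((\be_{d/2-1} \circ (i)) \circ (\be_{d/2-1} \circ (j))\big) = \odd\big((i) \circ (j)\big).
\]

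By Condition 2 of Definition~\ref{def:complete-pm}, the entries of any $\bZ \in M^{(d)}$ depend only on the $\odd$-set of the concatenation of the row and column index strings, so the displayed equality gives $Z_{(\be_{d/2-1} \circ (i))(\be_{d/2-1} \circ (j))} = Z_{(i)(j)}$ for all $i, j \in [N]$ and all feasible $\bZ$. Summing these identities against $W_{ij}$ shows that the objective in the proposition coincides with the objective of $\SOS_d(\bW)$ in \eqref{eq:sos-d} on every feasible point, so the two optima agree (indeed with the same optimizers). I do not expect any genuine obstacle: the only point needing care is correctly bookkeeping the parity of the symbol $1$, which is exactly where evenness of $d$ enters; note also that for $d = 2$ the claim is vacuous since $\be_0$ is the empty string.
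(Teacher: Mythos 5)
Your argument is correct and is exactly the observation the paper invokes (it states the proposition without proof, remarking only that it follows from Condition~2 of Definition~\ref{def:complete-pm}): inserting $d-2$ extra copies of the symbol $1$ preserves the parity of every symbol's multiplicity since $d$ is even, so the two index strings have the same $\odd$-set and hence give the same entry of $\bZ$. One tiny nit: for $d=2$ the claim is not ``vacuous'' but rather trivially true, since $\be_0$ is empty and the two index strings literally coincide.
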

\noindent
We next show that it does not matter whether we define the set of minors of $\bZ \in M^{(d)}$ indexed by $[N]^{d / 2} \times [N]^{d / 2}$ by truncating $\bZ$ or by applying the constraints of Definition~\ref{def:complete-pm} to only a subset of strings $[N]^{d / 2} \subset [N]^{\leq d / 2}$, as we did in the main text in Definition~\ref{def:truncated-pm}.

\begin{definition}
    Let $d$ be an even positive integer.
    Then, $\tilde{M}^{(d)} \subset \RR^{[N]^{d / 2} \times [N]^{d / 2}}$ is the set of \emph{degree $d$ truncated pseudomoment matrices}, consisting of $\tilde{\bZ}$ satisfying the properties of Definition~\ref{def:complete-pm} but only for strings of length exactly $d / 2$, that is, for $\bs, \bm t \in [N]^{d / 2}$.
\end{definition}

\begin{proposition}
    $\tilde{M}^{(d)}$ is equal to the set of $\tilde{\bZ}$ occurring as the minor indexed by $[N]^{d / 2} \times [N]^{d / 2}$ of $\bZ \in M^{(d)}$.
\end{proposition}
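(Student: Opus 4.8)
The statement is a two-sided inclusion, and the plan is to treat the two directions separately, with essentially all of the work in the second.

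The inclusion ``every minor of a $\bZ \in M^{(d)}$ indexed by $[N]^{d/2} \times [N]^{d/2}$ lies in $\tilde{M}^{(d)}$'' is immediate. If $\tilde{\bZ}$ is such a minor, then $\tilde{\bZ} \succeq \bm 0$ since a principal submatrix of a positive semidefinite matrix is positive semidefinite, and Conditions 2 and 3 of Definition~\ref{def:complete-pm} for $\tilde{\bZ}$ are exactly the instances of those conditions for $\bZ$ in which both $\bs$ and $\bt$ have length exactly $d/2$. Hence $\tilde{\bZ} \in \tilde{M}^{(d)}$, and no further argument is needed here.

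For the reverse inclusion, given $\tilde{\bZ} \in \tilde{M}^{(d)}$ I would construct an extension $\bZ \in M^{(d)}$ by a single ``padding'' device. For a string $\bs \in [N]^{\leq d/2}$ of length $k$, set $\hat{\bs} \colonequals \bs \circ \be_{d/2 - k} \in [N]^{d/2}$, that is, append $d/2 - k$ copies of the symbol $1$, and define $Z_{\bs\bt} \colonequals \tilde{Z}_{\hat{\bs}\hat{\bt}}$. Equivalently, $\bZ = \bR^\top \tilde{\bZ} \bR$, where $\bR$ is the $0/1$ matrix with rows indexed by $[N]^{d/2}$, columns indexed by $[N]^{\leq d/2}$, and a single $1$ in column $\bs$ located at row $\hat{\bs}$; in particular $\bZ \succeq \bm 0$ comes for free from $\tilde{\bZ} \succeq \bm 0$, which is Condition 1. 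Since $\hat{\bs} = \bs$ whenever $|\bs| = d/2$, the minor of $\bZ$ indexed by $[N]^{d/2} \times [N]^{d/2}$ equals $\tilde{\bZ}$ itself, so once we verify $\bZ \in M^{(d)}$ we are done. It therefore remains only to check Conditions 2 and 3.

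The main obstacle is Condition 2, because padding with the fixed symbol $1$ can toggle whether $1$ belongs to $\odd$. Concretely, the multiset of symbols of $\hat{\bs} \circ \hat{\bt}$ is that of $\bs \circ \bt$ together with $d - |\bs| - |\bt|$ extra copies of $1$, so $\odd(\hat{\bs} \circ \hat{\bt})$ equals $\odd(\bs \circ \bt)$ when $d - |\bs| - |\bt|$ is even and $\odd(\bs \circ \bt)\,\triangle\,\{1\}$ when it is odd. The resolution is the parity identity $|\bs| + |\bt| \equiv |\odd(\bs \circ \bt)| \pmod 2$, which makes the parity of $d - |\bs| - |\bt|$ --- hence which of the two cases occurs --- a function of $\odd(\bs \circ \bt)$ alone. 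Thus if $\odd(\bs \circ \bt) = \odd(\bs' \circ \bt')$ then $\odd(\hat{\bs} \circ \hat{\bt}) = \odd(\hat{\bs}' \circ \hat{\bt}')$, and so $Z_{\bs\bt} = \tilde{Z}_{\hat{\bs}\hat{\bt}} = \tilde{Z}_{\hat{\bs}'\hat{\bt}'} = Z_{\bs'\bt'}$, which is Condition 2. Condition 3 then drops out of the same bookkeeping: $\odd(\bs \circ \bt) = \emptyset$ forces $|\bs| + |\bt|$ even, hence $\odd(\hat{\bs} \circ \hat{\bt}) = \emptyset$ and $Z_{\bs\bt} = \tilde{Z}_{\hat{\bs}\hat{\bt}} = 1$ by Condition 3 for $\tilde{\bZ}$. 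I expect this parity accounting to be the only substantive step; everything else is formal manipulation of strings together with the pullback formula $\bZ = \bR^\top\tilde{\bZ}\bR$ for positive semidefiniteness.
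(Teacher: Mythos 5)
Your proposal is correct and follows essentially the same route as the paper: the same padding construction $\bZ_{\bs\bt} = \tilde{Z}_{\hat{\bs}\hat{\bt}}$, the same parity identity $|\bs|+|\bt| \equiv |\odd(\bs\circ\bt)| \pmod 2$ to verify Condition 2, and the same observation that $\odd(\bs\circ\bt)=\emptyset$ forces even total length for Condition 3. Your packaging of positive semidefiniteness as the pullback $\bZ = \bR^\top\tilde{\bZ}\bR$ by a $0/1$ selector matrix is a slightly tidier formulation of the paper's direct expansion $\bv^\top\bZ\bv$ into a sum of products, but it is the same underlying argument.
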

\begin{proof}
    Clearly if $\bZ \in M^{(d)}$ then the $[N]^{d / 2} \times [N]^{d / 2}$ minor of $\bZ$ will belong to $\tilde{M}^{(d)}$.
    Therefore, it suffices to prove that for any $\tilde{\bZ} \in \tilde{M}^{(d)}$, there exists $\bZ \in M^{(d)}$ such that $\tilde{\bZ}$ is the $[N]^{d / 2} \times [N]^{d / 2}$ minor of $\bZ$.

    We define the entries of $\bZ$ to be
    \begin{equation}
        Z_{\bs \bm t} \colonequals \tilde{Z}_{(\bs \circ \be_{d / 2 - |\bs|}) (\bm t \circ \be_{d / 2 - |\bm t|})}.
    \end{equation}
    (Intuitively, this construction is in analogy to the possibility of assuming without loss of generality that $x_1 = 1$ in the optimization defining $\M(\bW)$.)
    By construction, $\tilde{\bZ}$ is the necessary minor of $\bZ$.
    We have
    \begin{equation}
        \odd((\bs \circ \be_{d / 2 - |\bs|}) \circ (\bm t \circ \be_{d / 2 - |\bm t|})) = \odd(\bs \circ \bm t \circ \be_{|\bs| + |\bm t|}),
        \label{eq:pm-reduction-pf-1}
    \end{equation}
    and $|\bm s| + |\bm t| \equiv |\odd(\bs \circ \bt)| \pmod{2}$, so $Z_{\bs \bm t}$ is a function of only $\odd(\bs \circ \bm t)$.
    Also, if $\odd(\bs \circ \bm t)= \emptyset$ then $|\bs| + |\bm t|$ must be even, so in this case the expression in \eqref{eq:pm-reduction-pf-1} also equals $\emptyset$, thus in this case $Z_{\bs \bm t} = 1$ since $\tilde{\bZ} \in \tilde{M}^{(d)}$.

    It then only remains to show that $\bZ \succeq \bm 0$ to show that $\bZ \in M^{(d)}$.
    For two strings $\bs, \bs^\prime \in [N]^{< \infty}$, let us write $\bs \leq_1 \bs^\prime$ if $|\bs| \leq |\bs^\prime|$ and $\bs^\prime = \bs \circ \be_{|\bs^\prime| - |\bs|}$.
    Suppose that $\bv \in \RR^{[N] \leq d / 2}$, then using the above definition we may write its quadratic form with $\bZ$ as
    \begin{align}
      \bv^\top \bZ \bv
      &= \sum_{\bs \in [N]^{\leq d / 2}}\sum_{\bm t \in [N]^{\leq d / 2}} Z_{\bs \bm t} v_{\bs} v_{\bm t} \nonumber \\
      &= \sum_{\bs^\prime \in [N]^{d / 2}}\sum_{\bm t^\prime \in [N]^{d / 2}}\tilde{Z}_{\bm s^\prime \bm t^\prime}\left(\sum_{\substack{\bs \in [N]^{\leq d / 2} \\ \bs \leq_1 \bs^\prime}}\sum_{\substack{\bm t \in [N]^{\leq d / 2} \\ \bm t \leq_1 \bm t^\prime}} v_{\bs} v_{\bm t}\right) \nonumber \\
      &= \sum_{\bs^\prime \in [N]^{d / 2}}\sum_{\bm t^\prime \in [N]^{d / 2}}\tilde{Z}_{\bm s^\prime \bm t^\prime}\left(\sum_{\substack{\bs \in [N]^{\leq d / 2} \\ \bs \leq_1 \bs^\prime}} v_{\bs}\right)\left(\sum_{\substack{\bm t \in [N]^{\leq d / 2} \\ \bm t \leq_1 \bm t^\prime}} v_{\bm t}\right) \nonumber \\
      &\geq 0,
    \end{align}
    where the last inequality follows because $\tilde{\bZ} \succeq \bm 0$.
    Thus, $\bZ \in M^{(d)}$, completing the proof.
\end{proof}

The result we were interested in then follows, that $\SOS_d(\bW)$ may equivalently be defined in terms of optimization over the truncated pseudomoment matrices $\tilde{M}^{(d)}$.
\begin{corollary}
    For each $d$ an even positive integer,
    \begin{equation}
        \max_{\tilde{\bZ} \in \tilde{M}^{(d)}} \sum_{i = 1}^N \sum_{j = 1}^N W_{ij} \tilde{Z}_{(\be_{d / 2 - 1} \circ (i))(\be_{d / 2 - 1} \circ (j))}.
    \end{equation}
\end{corollary}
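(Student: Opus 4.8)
The plan is simply to chain the two propositions proved immediately above this corollary; the intended statement is that $\SOS_d(\bW)$ equals the displayed expression, and essentially no new work beyond a length count is required. By the first proposition of this appendix, $\SOS_d(\bW) = \max_{\bZ \in M^{(d)}} \sum_{i=1}^N\sum_{j=1}^N W_{ij} Z_{(\be_{d/2-1}\circ(i))(\be_{d/2-1}\circ(j))}$, so it suffices to check that replacing the maximum over $\bZ \in M^{(d)}$ by the maximum over $\tilde{\bZ} \in \tilde{M}^{(d)}$ leaves the value unchanged.

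The one observation to make is that this objective function depends on $\bZ$ only through its minor indexed by $[N]^{d/2} \times [N]^{d/2}$: each substitution string $\be_{d/2-1}\circ(i)$ has length exactly $(d/2-1)+1 = d/2$, so every entry of $\bZ$ appearing in the sum lies inside that minor. Hence, writing $\tilde{\bZ}$ for the $[N]^{d/2}\times[N]^{d/2}$ minor of $\bZ$, we have $\sum_{i,j} W_{ij} Z_{(\be_{d/2-1}\circ(i))(\be_{d/2-1}\circ(j))} = \sum_{i,j} W_{ij}\tilde{Z}_{(\be_{d/2-1}\circ(i))(\be_{d/2-1}\circ(j))}$, i.e. the objective factors through the minor map $\bZ \mapsto \tilde{\bZ}$.

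It then remains to invoke the second proposition, which identifies $\tilde{M}^{(d)}$ exactly with the collection of matrices occurring as the $[N]^{d/2}\times[N]^{d/2}$ minor of some $\bZ \in M^{(d)}$. Maximizing the objective over $\bZ \in M^{(d)}$ therefore coincides with maximizing it over $\tilde{\bZ}\in\tilde{M}^{(d)}$, which is the claimed identity. I do not anticipate any real obstacle here: the argument is pure bookkeeping, and the only point worth verifying carefully is the length count $|\be_{d/2-1}\circ(i)| = d/2$, which is what confines the objective to the truncated block and lets the two preceding propositions fit together.
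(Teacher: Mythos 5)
Your proof is correct and follows the same route the paper intends: the corollary is stated without an explicit proof precisely because it is the immediate chaining of the two preceding propositions, and your argument — that the objective factors through the minor map because each string $\be_{d/2-1}\circ(i)$ has length exactly $d/2$, so the surjectivity/image result for $\tilde{M}^{(d)}$ identifies the two maxima — is the right bookkeeping. You also correctly noted that the displayed corollary is missing its left-hand side $\SOS_d(\bW) =$, which is indeed a typo in the paper.
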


\section{Proofs of Structural Results on $\sB(N, r)$}
\label{app:BNr-structure}

\subsection{Proof of Proposition~\ref{prop:blocksym-structure}}

Let $\bM \in \sB(N, r)$.
To obtain the spectral bound on the blocks $\|\bM_{[ij]}\| \leq 1$, note that the claim is trivial for $i = j$, so let us fix $i, j \in [N]$ with $i \neq j$ and denote $\bS \colonequals \bM_{[ij]} \in \RR^{r \times r}_{\sym}$.
Taking a suitable minor of $\bM$, we find
\begin{equation}
    \left[\begin{array}{cc} \bm I_r & \bS \\ \bS & \bm I_r \end{array}\right] \succeq \bm 0.
\end{equation}
Taking a quadratic form with this matrix, we find that for any $\bv \in \RR^r$ with $\|\bv\|_2 = 1$,
\begin{equation}
    0 \leq \left[\begin{array}{c} \pm \bv \\ \bv \end{array}\right]^\top \left[\begin{array}{cc} \bm I_r & \bS \\ \bS & \bm I_r \end{array}\right]\left[\begin{array}{c} \pm \bv \\ \bv \end{array}\right] = 2 \pm 2\bv^\top \bS \bv,
\end{equation}
thus $|\bv^\top \bS \bv| \leq 1$, and the result follows.

From this, the bound $\|\bM\| \leq N$ follows from a simple case of the ``block Gershgorin circle theorem'' \cite{feingold:62}, which may be deduced directly as follows: suppose $\bv \in \RR^{rN}$ is the concatenation of $\bv_1, \dots, \bv_N \in \RR^r$, then
\begin{equation}
    \label{eq:pf-blocksym-structure-1}
    \bv^\top \bM \bv \leq \sum_{i = 1}^N \sum_{j = 1}^N |\bv_i^\top \bM_{[ij]} \bv_j| \leq \sum_{i = 1}^N \sum_{j = 1}^N \|\bv_i\|_2\|\bv_j\|_2 = \left(\sum_{i = 1}^N \|\bv_i\|_2\right)^2 \leq N\sum_{i = 1}^N \|\bv_i\|_2^2 = N\|\bv\|_2^2,
\end{equation}
giving the result.

For the final statement of the Proposition, if $\bM\bv = N\bv$, then all of the inequalities in \eqref{eq:pf-blocksym-structure-1} must be equalities.
For the third inequality to be an equality requires all of the $\|\bv_i\|_2$ to be equal for $i \in [N]$.
For the first inequality to be an equality requires $\bv_i^\top\bM_{[ij]}\bv_j \geq 0$ for all $i, j \in [N]$.
For the second inequality to be an equality requires $\bM_{[ij]}\bv_j = \bv_i$ for all $i, j \in [N]$, completing the proof.

\subsection{Proof of Proposition~\ref{prop:blocksym-factorization}}

Let $\bM \in \sB(N, r)$ and let $r^\prime \colonequals \rank(\bM)$.
Since $\bM$ contains $\bm I_r$ as a minor, $r^\prime \geq r$, and since $rN$ is the dimension of $\bM$, $r^\prime \leq rN$.
Then, there exists $\bU \in \RR^{r^\prime \times rN}$ such that $\bM = \bU^\top \bU$.
Let us expand in blocks
\begin{equation}
    \bU = \left[\begin{array}{cccc} \bU_1 & \bU_2 & \cdots & \bU_N \end{array}\right],
\end{equation}
for $\bU_i \in \RR^{r^\prime \times r}$.
Then, $\bU_i^\top \bU_i = \bM_{[ii]} = \bm I_r$.

This factorization is unchanged by multiplying $\bU$ on the left by any matrix of $\sO(r^\prime)$.
Since $\bU_1$ has orthogonal columns, by choosing a suitable such multiplication we may assume without loss of generality that the columns of $\bU_1$ are the first $r$ standard basis vectors $\be_1, \dots, \be_r \in \RR^{r^\prime}$.
Equivalently,
\begin{equation}
    \bU_1 = \left[\begin{array}{c} \bm I_r \\ \bm 0 \end{array}\right] \hspace{-0.15cm}\begin{array}{l} \} \hspace{0.1cm} r \\ \} \hspace{0.1cm} r^\prime - r\end{array}.
\end{equation}

Let us expand each $\bU_i$ in blocks of the same dimensions,
\begin{equation}
    \bU_i \equalscolon \left[\begin{array}{c} \bm S_i \\ \bm R_i \end{array}\right] \hspace{-0.15cm}\begin{array}{l} \} \hspace{0.1cm} r \\ \} \hspace{0.1cm} r^\prime - r\end{array},
\end{equation}
then $\bS_1 = \bm I_r$ and $\bR_1 = \bm 0$.
We first show that the $\bS_i$ are all symmetric.
Expanding the block $\bM_{[1i]}$, we have
\begin{equation}
    \bM_{[1i]} = \bU_1^\top \bU_i = \bm S_1^\top \bm S_i + \bm R_1^\top \bm R_i = \bm S_i,
\end{equation}
and since $\bM_{[1i]}$ is symmetric, $\bS_i$ is symmetric as well.

It remains to show the relations \eqref{eq:s-r-diag} and \eqref{eq:s-r-commutator}.
For the former, we expand $\bM_{[ii]}$:
\begin{equation}
    \bm I_r = \bM_{[ii]} = \bU_i^\top \bU_i = \bm S_i^2 + \bm R_i^\top \bm R_i.
\end{equation}
For the latter, we expand $\bM_{[ij]}$ and $\bM_{[ji]}$:
\begin{equation}
    \bm 0 = \bM_{[ij]} - \bM_{[ji]} = \bU_i^\top \bU_j - \bU_j^\top \bU_i = \bm S_i \bm S_j - \bS_j\bS_i + \bm R_i^\top \bm R_j - \bR_j^\top \bR_i.
\end{equation}

\section{Proofs of Relaxation Descriptions of Theorem~\ref{thm:sos4}}
\label{app:sos4-relaxations}

\subsection{Proof of Proposition~\ref{prop:blocksym-rank-constrained}}

\paragraph{Positive direction.}
Suppose $\bv_1, \dots, \bv_N \in \RR^r$, $\bX = \Gram(\bv_1, \dots, \bv_N)$, $\bv \in \RR^{rN}$ is the concatenation of $\bv_1, \dots, \bv_N$, $\sum_{i = 1}^N \|\bv_i\|_2^2 = N$, and $\bM \in \sB(N, r)$ with $\rank(\bM) = r$ and $\bv^\top \bM \bv = N^2$.
By Proposition~\ref{prop:blocksym-structure}, $\|\bv_i\|_2 = 1$ for each $i \in [N]$ and $\bM_{[ij]} \bv_j = \bv_i$ for each $i, j \in [N]$.

Since $\bM \succeq 0$ and $\rank(\bM) = r$, there exist $\bQ_i \in \RR^{r \times r}$ such that $\bM_{[ij]} = \bQ_i^\top \bQ_j$.
Moreover, since $\bQ_i^\top \bQ_i = \bM_{[ii]} = \bm I_r$, $\bQ_i \in \sO(r)$ for each $i \in [N]$.
The above factorization is unchanged by multiplying each $\bQ_i$ on the left by an orthogonal matrix, so we may assume without loss of generality that $\bQ_1 = \bm I_r$.

Thus, $\bM_{[1i]} = \bQ_1^\top \bQ_i = \bQ_i$, which must be symmetric, so $\bQ_i$ is symmetric for each $i \in [N]$.
And, $\bM_{[ij]} = \bQ_i\bQ_j$ is also symmetric, so $\bQ_1, \dots, \bQ_N$ are a commuting family of symmetric orthogonal matrices.
Therefore, there exists some $\bQ \in \sO(r)$ and $\one = \bd_1, \dots, \bd_N \in \{\pm 1\}^r$ such that $\bQ_i = \bQ \bD_i \bQ^\top$ where $\bD_i = \diag(\bd_i)$.

We have $\bv_i = \bM_{[i1]}\bv_1 = \bQ_i \bv_1 = \bQ \bD_i \bQ^\top \bv_1$ for each $i \in [N]$.
Thus,
\begin{equation}
    \label{eq:blocksym-rank-constrained-pf-1}
    X_{ij} = \la \bv_i, \bv_j \ra = \la \bD_i \bQ^\top \bv_1, \bD_j \bQ^\top \bv_1 \ra = \la \bD_i\bD_j, \bQ^\top \bv_1\bv_1^\top \bQ \ra.
\end{equation}
Let $\bm \rho = \diag(\bQ^\top \bv_1\bv_1^\top \bQ)$, then since $\bQ^\top \bv_1\bv_1^\top \bQ \succeq \bm 0$, $\bm\rho \geq 0$, and $\sum_{i = 1}^r \rho_i = \Tr(\bQ^\top \bv_1\bv_1^\top \bQ) = 1$.
Therefore, letting $\tilde{\bd}_k \colonequals ((\bd_i)_k)_{k = 1}^N \in \{ \pm 1\}^N$, \eqref{eq:blocksym-rank-constrained-pf-1} is
\begin{align}
  X_{ij} &= \sum_{k = 1}^r \rho_k (\bd_i)_k (\bd_j)_k, \\
  \bX &= \sum_{k = 1}^r \rho_k \tilde{\bd}_k\tilde{\bd}_k^\top \in \fC^N,
\end{align}
completing the proof.

\paragraph{Negative direction.}

We have $\bm I_N \in \fC^N$ since $\bm I_N = \frac{1}{2^N}\sum_{\bx \in \{ \pm 1\}^N} \bx\bx^\top$, as each off-diagonal entry occurs an equal number of times with a positive sign as with a negative sign in the summation.
We will view $\bm I_N = \Gram(\be_1, \dots, \be_N)$, let $\bv = \sum_{i = 1}^N \be_i \otimes \be_i$ be the concatenation of the $\be_i$, and will show that if $\bM \in \sB(N, N)$ with $\bv^\top \bM \bv = N^2$, then $\rank(\bM) > N$ when $N \notin \{1, 2\} \cup 4\NN$.

Suppose otherwise, then, as in the argument above, $\bM \in \sB(N, N)$ has $\bM_{[ij]} = \bQ_i\bQ_j$ for some $\bQ_i \in \sO(N) \cap \RR^{N \times N}_{\sym}$, with $\bQ_1 = \bm I_N$, and where $\bQ_1, \dots, \bQ_N$ commute.
We may then write $\bQ_i = \bQ\bD_i\bQ^\top$ for $\bQ \in \sO(N)$ and $\bD_i = \diag(\bd_i)$ for $\bd_i \in \{ \pm 1 \}^N$.
Let us also write $\bq_1, \dots, \bq_N$ for the rows of $\bQ$, which form an orthonormal basis of $\RR^N$.

We have
\begin{equation}
    N^2 = \bv^\top \bM \bv = \sum_{i = 1}^N \sum_{j = 1}^N (\be_i \otimes \be_i)^\top \bM (\be_j \otimes \be_j) = \sum_{i = 1}^N \sum_{j = 1}^N (\bM_{[ij]})_{ij}.
\end{equation}
Since $\bM \succeq 0$ and $\diag(\bM) = \one$, all entries of $\bM$ are at most 1, so each term in this sum must equal 1, i.e.\ $(\bM_{[ij]})_{ij} = 1$ for all $i, j \in [N]$.
We then have, for any $i, j$,
\begin{equation}
    1 = (\bM_{[ij]})_{ij} = \be_i^\top \bQ \bD_i\bD_j \bQ^\top \be_j = \la \bD_i \bq_i, \bD_j \bq_j \ra,
\end{equation}
whereby $\bD_i\bq_i = \bD_j \bq_j$ for all $i, j$.
In other words, there exists some $\bq \in \RR^N$ with $\|\bq\|_2 = 1$ such that $\bD_i \bq_i = \bq$, or $\bq_i = \bD_i \bq$.
Thus, the $\bq_i$ are sign flips of a fixed vector.

On the other hand, the $\bq_i$ are the rows of $\bQ \in \sO(N)$, whose columns must also form an orthonormal basis.
Therefore, every entry of $\bq$ must have the same norm, so each entry of $\bQ$ also has equal norm; in other words, $\bQ$ is, up to a scaling depending on definitions, a Hadamard matrix with real entries \cite{colbourn:06}.
A real-valued Hadamard matrix of order $N$ can only exist when $N \in \{1, 2\} \cup 4\NN$, so under the assumptions of the Proposition this is a contradiction.

This example is simple to analyze but probably suboptimal.
In general, a suitable example for this result is a matrix $\bX \in \fC^N$ where $\rank(\bX)$ is strictly smaller than the smallest number of cut matrices $\bx_1\bx_1^\top, \dots, \bx_m\bx_m^\top$ for $\bx_i \in \{ \pm 1\}^N$ such that $\bX \in \conv(\{\bx_i\bx_i^\top\}_{i = 1}^m)$.
The latter quantity is similar to the notions of \emph{completely-positive rank} and \emph{non-negative rank}, and appears to behave counterintuitively sometimes; see \cite{fawzi:16:2,liuset} for some discussion.

\subsection{Proof of Proposition~\ref{prop:blocksym-separable}}

Suppose first that $\bX = \Gram(\bv_1, \dots, \bv_N)$ for $\bv_i \in \RR^r$ with $\sum_{i = 1}^N \|\bv_i\|_2^2 = N$, and $\bM \in \sB_{\sep}(N, r)$ such that $\bv^\top \bM \bv = N^2$.
By Proposition~\ref{prop:blocksym-structure}, $\|\bv_i\|_2 = 1$ for each $i \in [N]$.
By absorbing constants and rearranging tensor products, the condition $\bM \in \sB_{\sep}(N, r)$ may be rewritten as
\begin{equation}
    \bM = \sum_{i = 1}^m \bA_i \otimes (\bb_i\bb_i^\top)
\end{equation}
for some $\bA_i \in \RR^{N \times N}_{\sym}$ with $\bA_i \succeq \bm 0$ and such that, letting $\ba_i = \diag(\bA_i)$,
\begin{equation}
    \label{eq:blocksym-sep-pf-2}
    \sum_{i = 1}^m (\ba_i)_j\bb_i\bb_i^\top = \bm I_r
\end{equation}
for each $j \in [N]$.

Let $\bV \in \RR^{r \times N}$ have the $\bv_i$ as its columns.
Then,
\begin{equation}
    \label{eq:blocksym-sep-pf-3}
    \bv^\top \bM \bv = \sum_{i = 1}^m \sum_{j = 1}^N \sum_{k = 1}^N (\bA_i)_{jk} \la \bb_i, \bv_j \ra \la \bb_i, \bv_k \ra = \sum_{i = 1}^m \bb_i^\top \bV\bA_i \bV^\top \bb_i.
\end{equation}
We now bound $\bb_i^\top \bV\bA_i \bV^\top \bb_i$ by applying a simple matrix inequality; the rather complicated formulation below is only to handle carefully the possibility of certain diagonal entries of $\bA_i$ equaling zero.
Let $\tilde{\bA}_i$ be the maximal strictly positive definite minor of $\bA_i$, of dimension $N_i$, and let $\bw_i$ be the restriction of $\bV^\top \bb_i$ to the same indices.
Then, $\diag(\tilde{\bA}_i) > 0$.
Let $\pi_i: [N_i] \to [N]$ map the indices of this minor to the original indices, and let us define a diagonal matrix $\bD_i \in \RR^{N_i \times N_i}$ by
\begin{equation}
    (\bD_i)_{jj} \colonequals \left(\sum_{j^\prime = 1}^{N_i} \sqrt{(\tilde{\bA}_i)_{j^\prime j^\prime}} \cdot |\la \bb_i, \bv_{\pi_i(j^\prime)} \ra|\right) \frac{|\la \bb_i, \bv_{\pi_i(j)} \ra|}{\sqrt{(\tilde{\bA}_i)_{jj}}}.
\end{equation}
Then, we claim $\bD_i \succeq \bw_i\bw_i^\top$.
This is a matter of applying a weighted Cauchy-Schwarz inequality: for $\bx \in \RR^N$, we have
\begin{align}
  \bx^\top \bw_i\bw_i^\top \bx
  &= \left(\sum_{j = 1}^{N_i} x_j \la \bb_i, \bv_{\pi_i(j)} \ra \right)^2 \nonumber \\
  &\leq \left(\sum_{j^\prime = 1}^{N_i} \sqrt{(\tilde{\bA}_i)_{j^\prime j^\prime}} \cdot |\la \bb_i, \bv_{\pi_i(j^\prime)} \ra| \right)\left(\sum_{j = 1}^N \frac{|\la \bb_i, \bv_{\pi_i(j)} \ra|}{\sqrt{(\tilde{\bA}_i)_{jj}}}x_j^2 \right) \nonumber \\
  &= \sum_{j = 1}^N (\bD_i)_{jj} x_j^2.
\end{align}
Therefore,
\begin{align}
  \bb_i^\top \bV\bA_i \bV^\top \bb_i
  &= \bw_i^\top \tilde{\bA}_i \bw_i \nonumber \\
  &\leq \la \bD_i, \tilde{\bA}_i \ra \nonumber \\
  &= \left(\sum_{j = 1}^{N_i} \sqrt{(\tilde{\bA}_i)_{jj}} \cdot |\la \bb_i, \bv_{\pi_i(j)} \ra| \right)^2 \nonumber \\
  &= \left(\sum_{j = 1}^{N} \sqrt{(\ba_i)_j} \cdot |\la \bb_i, \bv_{j} \ra| \right)^2.
    \label{eq:blocksym-sep-pf-5}
\end{align}

Now, combining \eqref{eq:blocksym-sep-pf-5} with \eqref{eq:blocksym-sep-pf-2} and \eqref{eq:blocksym-sep-pf-3} and using the Cauchy-Schwarz inequality, we find
\begin{equation}
    \bv^\top \bM \bv \leq N\sum_{i = 1}^m \sum_{j = 1}^N (\ba_i)_{j} \la \bb_i, \bv_{j} \ra^2 = N\sum_{j = 1}^N \|\bv_j\|_2^2 = N^2.
    \label{eq:blocksym-sep-pf-4}
\end{equation}
Thus, the Cauchy-Schwarz inequality in \eqref{eq:blocksym-sep-pf-4} must be tight, whereby there exist $\kappa_i \geq 0$ with $\sum_{ i = 1}^m \kappa_i = 1$ such that
\begin{equation}
    (\ba_i)_j \la \bb_i, \bv_j \ra^2 = \kappa_i
    \label{eq:blocksym-sep-pf-6}
\end{equation}
for every $i \in [m]$ and $j \in [N]$.
Note in particular that if $\kappa_i > 0$ for some $i \in [m]$, then $\la \bb_i, \bv_j \ra \neq 0$ for all $j \in [N]$.
We may then define vectors $\bm\beta_{jk} \in \RR^m$ by
\begin{equation}
    (\bm\beta_{jk})_i \colonequals \left\{ \begin{array}{lcl} \sqrt{\kappa_i} \frac{\la \bb_i, \bv_j \ra}{\la \bb_i, \bv_k \ra} & : & \kappa_i > 0, \\ 0 & : & \kappa_i = 0. \end{array}\right.
\end{equation}
Then,
\begin{align}
  \|\bm\beta_{jk}\|_2^2 &= \sum_{i: \kappa_i > 0} \kappa_i \frac{\la \bb_i, \bv_j \ra^2}{\la \bb_i, \bv_k \ra^2} \nonumber \\
                        &= \sum_{i: \kappa_i > 0} (\bm a_i)_k\la \bb_i, \bv_j \ra^2 \nonumber \\
                        &\leq \sum_{i = 1}^m (\bm a_i)_k\la \bb_i, \bv_j \ra^2 \nonumber \\
  &\stackrel{\eqref{eq:blocksym-sep-pf-2}}{=} 1, \\
  \la \bm\beta_{jk}, \bm\beta_{kj} \ra &= \sum_{i: \kappa_i > 0} \kappa_i = 1.
\end{align}
Thus, in fact $\|\bm \beta_{jk}\|_2 = 1$ and $\bm\beta_{jk} = \bm\beta_{kj}$ for all $j, k \in [N]$.
This implies first that whenever $\kappa_i > 0$ then $\la \bb_i, \bv_j \ra^2$ does not depend on $j$, and second that whenever $\kappa_i = 0$ then $(\ba_i)_k \la \bb_i, \bv_j \ra^2 = 0$ for all $j, k \in [N]$.
We may assume without loss of generality that $\bA_i \neq \bm 0$, so $\ba_i \neq \bm 0$, and thus the latter implies that whenever $\kappa_i = 0$, then $\la \bb_i, \bv_j \ra^2 = 0$ for all $j \in [N]$.
Therefore, in all cases, $\la \bb_i, \bv_j \ra^2$ does not depend on $j$.

Let us write $\eta_i \colonequals \la \bb_i, \bv_j \ra^2$.
For $i$ where $\eta_i \neq 0$, by \eqref{eq:blocksym-sep-pf-6} $(\bm a_i)_j$ does not depend on $j$ either.
For these $i$, let us write $\phi_i \colonequals (\bm a_i)_j$.
Evaluating \eqref{eq:blocksym-sep-pf-2} as a bilinear form on $\bv_j$ and $\bv_k$, we then find
\begin{equation}
    X_{jk} = \la \bv_j, \bv_k \ra = \sum_{i: \eta_i \neq 0} \phi_i \la \bb_i, \bv_j \ra \la \bb_i, \bv_k \ra = \sum_{i:\eta_i \neq 0} \phi_i \eta_i \sgn(\la \bb_i, \bv_j \ra)\sgn(\la \bb_i, \bv_k \ra).
\end{equation}
When $\eta_i \neq 0$, then $\phi_i\eta_i = \kappa_i$, and when $\eta_i = 0$ then $\kappa_i = 0$.
Therefore, we have in fact
\begin{equation}
    X_{jk} = \sum_{i = 1}^m \kappa_i \sgn(\la \bb_i, \bv_j \ra)\sgn(\la \bb_i, \bv_k \ra),
\end{equation}
showing $\bX \in \fC^N$.

The converse is simpler: suppose that $\bX \in \fC^N$ and $\bX = \Gram(\bv_1, \dots, \bv_N) \in \fC^N$ for $\bv_1, \dots, \bv_N \in \RR^r$.
Let $\bv \in \RR^{rN}$ be the concatenation of the $\bv_1, \dots, \bv_N$.
We will build $\bM \in \sB_{\sep}(N, r)$ by essentially reversing the process described in the proof of Proposition~\ref{prop:blocksym-rank-constrained}.
Let $\rho_1, \dots, \rho_m \geq 0$ with $\sum_{i = 1}^m \rho_i = 1$ and $\tilde{\bd}_1, \dots, \tilde{\bd}_m \in \{\pm 1\}^N$ be such that
\begin{equation}
    \label{eq:blocksym-sep-pf-7}
    \bX = \sum_{k = 1}^m \rho_k \tilde{\bd}_k\tilde{\bd}_k^\top.
\end{equation}
We may assume without loss of generality that $m \geq r$, by adding extra terms with zero coefficient to this expression.
Then, writing $\bd_i \colonequals ((\tilde{\bd}_k)_i)_{k = 1}^m \in \RR^m$, $\bR = \diag(\bm\rho)$, and $\bv_i^\prime = \bR^{1/2}\bd_i$, \eqref{eq:blocksym-sep-pf-7} implies that $\bX = \Gram(\bv_1^\prime, \dots, \bv_N^\prime)$.
There then exists $\bZ \in \RR^{m \times r}$ such that $\bZ\bv_i = \bv_i^\prime$ and $\bZ^\top \bZ = \bm I_r$.

We let $\bD_i \colonequals \diag(\bd_i)$, and define $\bM \in \RR^{rN \times rN}$ to have blocks
\begin{equation}
    \bM_{[ij]} \colonequals \bZ^\top \bD_i\bD_j \bZ = (\bD_i\bZ)^\top (\bD_j \bZ).
    \label{eq:blocksym-sep-pf-8}
\end{equation}
The last expression gives $\bM$ as a Gram matrix, so $\bM \succeq \bm 0$.
Since $\bD_i^2 = \bm I_r$ for each $i \in [N]$, $\bM_{[ii]} = \bm I_r$, and since $\bD_1, \dots, \bD_N$ commute, $\bM_{[ij]}$ is symmetric.
Thus, $\bM \in \sB(N, r)$.
We also have
\begin{equation}
\bv^\top \bM \bv = \sum_{i = 1}^N \sum_{j = 1}^N \bv_i^{\prime^\top} \bD_i \bD_j \bv_j^{\prime^\top} = \sum_{i = 1}^N \sum_{j = 1}^N \bd_i^\top \bR^{1/2} \bD_i \bD_j \bR^{1/2} \bd_j = \sum_{i = 1}^N \sum_{j = 1}^N \sum_{k = 1}^m \rho_k = N^2.
\end{equation}
It only remains to check that $\bM$ is separable.
To do this, let $\bz_1, \dots, \bz_m \in \RR^r$ be the rows of $\bZ$, then it is straightforward to check against \eqref{eq:blocksym-sep-pf-8} that we can write $\bM = \sum_{i = 1}^m (\tilde{\bd}_i \otimes \bz_i)(\tilde{\bd}_i \otimes \bz_i)^\top$.

\section{Proofs of Results on Partial Transposition}
\label{app:partial-transpose}

\subsection{Proof of Proposition~\ref{prop:schmidt-decomp}}

This result is simply a matter of applying the vectorization operation $\vec$ to the singular value decomposition: if $\bV = \sum_{i = 1}^r \sigma_i \by_i \bz_i^\top$ for $\by_i \in \RR^r$ and $\bz_i \in \RR^N$, then, noting that $\vec(\by_i\bz_i^\top) = \bz_i \otimes \by_i$ and $\vec: \RR^{r \times N} \to \RR^{rN}$ is linear, the result follows.

\subsection{Proof of Proposition~\ref{prop:rank-one-pt}}

Suppose $\bV \in \RR^{r \times N}$ with $r \leq N$ has singular value decomposition $\bV = \sum_{i = 1}^r \sigma_i\by_i \bz_i^\top$ for orthonormal sets of $\by_i \in \RR^r$ and $\bz_i \in \RR^N$ and with $\sigma_i \geq 0$.
Let $\bv = \vec(\bV)$.
Applying Proposition~\ref{prop:schmidt-decomp}, we may write
\begin{align}
  \bv\bv^\top
  &= \left(\sum_{i = 1}^r \sigma_i \bz_i \otimes \by_i\right) \left(\sum_{i = 1}^r \sigma_i \bz_i \otimes \by_i \right)^\top \nonumber \\
  &= \sum_{i = 1}^r \sum_{j = 1}^r \sigma_i\sigma_j (\bz_i\bz_j^\top) \otimes (\by_i \by_j^\top) \nonumber \\
  &= \sum_{i = 1}^r \sum_{j = 1}^r \sigma_i\sigma_j (\bz_i\otimes \by_i) (\bz_j \otimes \by_j)^\top.
\end{align}
Therefore, the partial transpose is
\begin{align}
  (\bv\bv^\top)^{\ptop}
  &= \sum_{i = 1}^r \sum_{j = 1}^r \sigma_i\sigma_j (\bz_i\bz_j^\top) \otimes (\by_j \by_i^\top) \nonumber \\
  &= \sum_{i = 1}^r \sum_{j = 1}^r \sigma_i\sigma_j (\bz_i \otimes \by_j ) \otimes (\bz_j \otimes \by_i)^\top \nonumber \\
  &= \sum_{i = 1}^r \sigma_i^2 (\bz_i \otimes \by_i)(\bz_i \otimes \by_i)^\top \nonumber \\
  &\hspace{2cm}+ \sum_{1 \leq i < j \leq r} \sigma_i\sigma_j \bigg((\bz_i \otimes \by_j)(\bz_j \otimes \by_i)^\top + (\bz_j \otimes \by_i)(\bz_i \otimes \by_j)^\top\bigg),
    \label{eq:pt-pf-1}
\end{align}
and the result follows by diagonalizing the rank-two matrices in the second sum.

\subsection{Proof of Proposition~\ref{prop:v-sym}}

Suppose $\bV \in \RR^{r \times N}$ with $r \leq N$ has full rank and singular value decomposition $\bV = \sum_{i = 1}^r \sigma_i\by_i \bz_i^\top$ for an orthonormal basis of $\by_i \in \RR^r$ and an orthonormal set $\bz_i \in \RR^N$, with $\sigma_i > 0$ by the full-rank condition.
Let $\bv = \vec(\bV)$.
Let us also extend $\bz_1, \dots, \bz_r$ with $\bz_{r + 1}, \dots, \bz_N$ to a full orthonormal basis.

Since $\bV\bV^\top = \sum_{i = 1}^r \sigma_i^2 \by_i\by_i^\top$, we may expand
\begin{equation}
    \bm I_N \otimes (\bV\bV^\top) = \left(\sum_{i = 1}^N \bz_i\bz_i^\top\right) \otimes \left(\sum_{j = 1}^r \sigma_j^2 \by_j\by_j^\top\right) = \sum_{i = 1}^N \sum_{j = 1}^r \sigma_j^2 (\bz_i \otimes \by_j)(\bz_i \otimes \by_j)^\top.
\end{equation}
Dividing this sum into those summands with $i \leq r$ and those with $i > r$ and subtracting \eqref{eq:pt-pf-1}, we may write
\begin{align}
  \bm I_N \otimes (\bV\bV^\top)
  &- (\bv\bv^\top)^{\ptop} \nonumber \\
  &= \sum_{1 \leq i < j \leq r} \Bigg(\frac{1}{2}\sigma_i^2 (\bz_j \otimes \by_i)(\bz_j \otimes \by_i)^\top + \frac{1}{2}\sigma_j^2 (\bz_i \otimes \by_j)(\bz_i \otimes \by_j)^\top \nonumber \\
  &\hspace{2cm}- \sigma_i\sigma_j(\bz_i \otimes \by_j)(\bz_j \otimes \by_i)^\top - \sigma_i\sigma_j(\bz_j \otimes \by_i)(\bz_i \otimes \by_j)^\top\Bigg) \nonumber \\
  &\hspace{0.8cm}+ \sum_{i = r + 1}^N \sum_{j = 1}^r \sigma_j^2 (\bz_i \otimes \by_j)(\bz_i \otimes \by_j)^\top \nonumber \\
  &= \frac{1}{2}\sum_{1 \leq i < j \leq r} \left(\sigma_i\bz_j \otimes \by_i - \sigma_j\bz_i \otimes \by_j\right)\left(\sigma_i\bz_j \otimes \by_i - \sigma_j\bz_i \otimes \by_j\right)^\top \nonumber \\
  &\hspace{0.8cm}+ \sum_{i = r + 1}^N \sum_{j = 1}^r \sigma_j^2 (\bz_i \otimes \by_j)(\bz_i \otimes \by_j)^\top. \label{eq:v-sym-pf-1}
\end{align}
We thus find an alternate proof that $\bm I_N \otimes (\bV\bV^\top) - (\bv\bv^\top)^{\ptop} \succeq \bm 0$.
The benefit of this approach is that it allows us to read off the subspace we are interested in directly: note that up to rescaling the expression \eqref{eq:v-sym-pf-1} is a spectral decomposition, and thus
\begin{align}
  \ker\bigg(\bm I_N
  &\otimes (\bV\bV^\top) - (\bv\bv^\top)^{\ptop}\bigg)^\perp \nonumber \\
  &= \mathsf{span}\left(\left\{\frac{1}{\sqrt{\sigma_i^2 + \sigma_j^2}}\left(\sigma_i\bz_j \otimes \by_i - \sigma_j\bz_i \otimes \by_j\right)\right\}_{1 \leq i < j \leq r} \cup \{\bz_i \otimes \by_j\}_{i \in [N] \setminus [r], j \in [r]}\right), \\
  \ker\bigg(\bm I_N &\otimes (\bV\bV^\top) - (\bv\bv^\top)^{\ptop}\bigg) \nonumber \\
  &= \mathsf{span}\left(\left\{\frac{1}{\sqrt{\sigma_i^2 + \sigma_j^2}}\left(\sigma_i\bz_j \otimes \by_i + \sigma_j\bz_i \otimes \by_j\right)\right\}_{1 \leq i < j \leq r} \cup \{\bz_i \otimes \by_i\}_{i \in [r]}\right), \label{eq:v-sym-pf-2}
\end{align}
where the first equality follows from \eqref{eq:v-sym-pf-1} and the second may be checked by counting dimensions and verifying mutual orthogonalities.
It is also straightforward to verify that the vectors enumerated in \eqref{eq:v-sym-pf-2} are orthonormal, and thus give an orthonormal basis for  $\ker(\bm I_N \otimes (\bV\bV^\top) - (\bv\bv^\top)^{\ptop})$.

The only remaining task is to check the alternate description
\begin{equation}
    \ker\bigg(\bm I_N \otimes (\bV\bV^\top) - (\bv\bv^\top)^{\ptop}\bigg) \stackrel{?}{=} \left\{ \vec(\bS \bV): \bS \in \RR^{r \times r}_{\sym} \right\} \equalscolon V_{\sym}.
    \label{eq:v-sym-pf-3}
\end{equation}
We have $\dim(\ker\bigg(\bm I_N \otimes (\bV\bV^\top) - (\bv\bv^\top)^{\ptop}\bigg)) = \frac{r(r + 1)}{2}$ by \eqref{eq:v-sym-pf-2}.
Since $\bv_i$ are a spanning set, if $\vec(\bS\bV) = \bm 0$ then $\bS = \bm 0$, so the map $\bS \mapsto \vec(\bS\bV)$ is injective and thus $\dim(V_{\sym}) = \dim(\RR^{r \times r}_{\sym}) = \frac{r(r + 1)}{2}$ as well.
Therefore, to show \eqref{eq:v-sym-pf-3} it suffices to show one inclusion.

Suppose that $\bS \in \RR^{r \times r}_{\sym}$, then
\begin{align}
  ((\bm I_N \otimes (\bV\bV^\top) - (\bv\bv^\top)^{\ptop})\vec(\bS\bV))_{[i]}
  &= (\bV\bV^\top)\bS\bv_i - \sum_{j = 1}^N \bv_j\bv_i^\top \bS\bv_j \nonumber \\
  &= \sum_{j = 1}^N \bv_j \bv_j^\top \bS \bv_i - \sum_{j = 1}^N \bv_j\bv_i^\top \bS\bv_j \nonumber \\
  &= \bm 0,
\end{align}
where in the last step we use that $\bS$ is symmetric.
Thus, $\vec(\bS\bV) \in \ker(\bm I_N \otimes (\bV\bV^\top) - (\bv\bv^\top)^{\ptop})$, so $V_{\sym} \subseteq \ker(\bm I_N \otimes (\bV\bV^\top) - (\bv\bv^\top)^{\ptop})$, which completes the proof by the previous dimension counting argument.

\section{Tight Frame Projector Calculations}
\label{app:projectors}

In this appendix we will derive formulae for the orthogonal projectors to various subspaces associated with a UNTF $\bv_1, \dots, \bv_N \in \SS^{r - 1}$, and the specializations to the case of ETFs.
Let $\bV \in \RR^{r \times N}$ have the $\bv_i$ as its columns, then recall that we define a map $\sV: \RR^{r \times r}_{\sym} \to \RR^{rN}$ by $\sV(\bS) = \sqrt{\frac{r}{N}}\vec(\bS\bV)$, which by Proposition~\ref{prop:lin-isom} is a linear isometric embedding.
Let us also write $\bX = \bV^\top \bV \in \RR^{N \times N}$ for the Gram matrix.

We then are interested in the projector to the following subspace:
\begin{equation}
  V_{\sym}^\prime \colonequals \sV\left(\left\{ \bS \in \RR^{r \times r}_{\sym}: \la \bS, \bv_i\bv_i^\top \ra = 0 \text{ for } i \in [N]\right\}\right).
\end{equation}
As a warmup, we will also consider the following simpler subspace:
\begin{equation}
    V_{\sym} \colonequals \sV(\RR^{r \times r}_{\sym}).
\end{equation}
The idea of the calculation in both cases will be as follows: suppose $V \subset \RR^{r \times r}_{\sym}$ is some subspace and $\by \in \RR^{rN}$ is the concatenation of $\by_1, \dots, \by_N \in \RR^r$.
Then by the variational characterization of the orthogonal projector, $\bP_{\sV(V)}\by = \sV(\bS^\star(\by))$, where
\begin{align}
  \mathsf{obj}(\bS; \by)
  &\colonequals \frac{1}{2}\sum_{i = 1}^N \left\|\sqrt{\frac{r}{N}}\bS\bv_i - \by_i\right\|_2^2 \nonumber \\
  &= \frac{1}{2}\|\by\|_2^2 + \frac{1}{2}\Tr(\bS^2) - \left\la \bS, \sqrt{\frac{r}{N}}\sum_{i = 1}^N\frac{\bv_i\by_i^\top + \by_i\bv_i^\top}{2}\right\ra, \label{eq:var-sym-proj-obj} \\
    \bS^\star(\by) &= \argmin_{\bS \in V} \mathsf{obj}(\bS; \by). \label{eq:var-sym-proj}
\end{align}
a minimization which we will solve by introducing Lagrange multipliers for the constraint $\bS \in V$, which will reduce the task to solving a linear system in the Lagrange multiplier variables.\footnote{Note that the simple form of the quadratic term in $\bS$ is a consequence of the $\bv_i$ forming a UNTF, whereby $\sum_{i = 1}^N \bv_i\bv_i^\top = \bV\bV^\top = \frac{N}{r} \bm I_r$.
  In a more general setting, the matrix $\bV\bV^\top$ would appear and, upon differentiating with respect to $\bS$, we would not get a formula for the optimizer $\bS^\star$ but rather a so-called \emph{continuous matrix Lyapunov equation} $(\bV\bV^\top)\bS^\star + \bS^\star(\bV\bV^\top) = \bQ$.
  Such an equation in principle admits an analytic solution by reducing to a linear equation in $\vec(\bS^\star)$ (see e.g.\ \cite{kuvcera:74}), but this would further complicate the calculations.}

\subsection{Projector to $V_{\sym}$}

We illustrate the first part of this idea below for the simplest case of $V = V_{\sym}$, where no Lagrange multipliers are required.

\begin{proposition}
    $\bP_{V_\sym} = \frac{r}{2N}(\bX \otimes \bm I_r + (\bv\bv^\top)^{\ptop})$, where $\bX = \bV^\top \bV$ is the Gram matrix of the $\bv_i$.
\end{proposition}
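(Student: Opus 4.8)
The plan is to use the variational characterization of the orthogonal projector set up in \eqref{eq:var-sym-proj-obj}--\eqref{eq:var-sym-proj}, specialized to the subspace $V = \RR^{r \times r}_{\sym}$, where the constraint is vacuous and so no Lagrange multipliers are needed. Fix an arbitrary $\by \in \RR^{rN}$, the concatenation of $\by_1, \dots, \by_N \in \RR^r$. The key simplification is that, because $\bv_1, \dots, \bv_N$ form a UNTF, $\sum_{i=1}^N \bv_i\bv_i^\top = \bV\bV^\top = \tfrac{N}{r}\bm I_r$, so the quadratic term in \eqref{eq:var-sym-proj-obj} becomes $\tfrac{r}{N}\sum_i \bv_i^\top \bS^2 \bv_i = \tfrac{r}{N}\Tr\!\big(\bS^2 \textstyle\sum_i \bv_i\bv_i^\top\big) = \Tr(\bS^2) = \|\bS\|_F^2$. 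Writing $\bQ(\by) \colonequals \sqrt{\tfrac{r}{N}}\sum_{i=1}^N \tfrac{\bv_i\by_i^\top + \by_i\bv_i^\top}{2} \in \RR^{r \times r}_{\sym}$ for the matrix appearing in the linear term, we get $\mathsf{obj}(\bS;\by) = \tfrac12\|\by\|_2^2 + \tfrac12\|\bS - \bQ(\by)\|_F^2 - \tfrac12\|\bQ(\by)\|_F^2$. Since $\bQ(\by)$ is already symmetric, it is feasible, and hence $\bS^\star(\by) = \bQ(\by)$ is the minimizer.

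Next I would translate this back through $\sV$: by Proposition~\ref{prop:lin-isom}, $\bP_{V_{\sym}}\by = \sV(\bS^\star(\by)) = \tfrac{r}{N}\vec\!\big(\sum_{i=1}^N \tfrac{\bv_i\by_i^\top + \by_i\bv_i^\top}{2}\,\bV\big)$, whose $k$-th block is $\tfrac{r}{2N}\sum_{i=1}^N \big(\la \by_i, \bv_k\ra\, \bv_i + \la \bv_i, \bv_k\ra\, \by_i\big)$. To finish I would match this block-by-block with the claimed formula. In the second sum $\la \bv_i, \bv_k\ra = X_{ki}$, so $\sum_i \la \bv_i, \bv_k\ra\, \by_i$ is exactly the $k$-th block of $(\bX \otimes \bm I_r)\by$. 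For the first sum, the $(k,i)$ block of $(\bv\bv^\top)^{\ptop}$ is the transpose of the $(k,i)$ block of $\bv\bv^\top$, namely $(\bv_k\bv_i^\top)^\top = \bv_i\bv_k^\top$, so the $k$-th block of $(\bv\bv^\top)^{\ptop}\by$ is $\sum_i \bv_i\bv_k^\top\by_i = \sum_i \la \by_i, \bv_k\ra\, \bv_i$. Thus every block of $\bP_{V_{\sym}}\by$ equals the corresponding block of $\tfrac{r}{2N}(\bX \otimes \bm I_r + (\bv\bv^\top)^{\ptop})\by$, and since $\by$ was arbitrary the two matrices coincide.

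The computation is essentially routine once the UNTF identity is invoked; the only place demanding care is the last paragraph's bookkeeping — keeping straight the block-transpose convention in $(\bv\bv^\top)^{\ptop}$ and the block structure of the Kronecker product $\bX \otimes \bm I_r$ (that its $(k,i)$ block is $X_{ki}\bm I_r$) so that the two sums are correctly identified. No deeper obstacle arises.
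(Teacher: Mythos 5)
Your proof is correct and follows essentially the same variational route as the paper: the paper minimizes $\mathsf{obj}(\bS;\by)$ over $\bS\in\RR^{r\times r}_{\sym}$ by setting the gradient to zero, while you complete the square to read off the same minimizer $\bS^\star(\by)=\bQ(\by)$, then both arguments push $\bS^\star$ back through $\sV$ and match blocks. The only cosmetic difference is that the paper reads off the blocks $(\bP_{V_\sym})_{[ij]}=\tfrac{r}{2N}\big(X_{ij}\bm I_r+\bv_j\bv_i^\top\big)$ directly from $(\bP_{V_\sym}\by)_{[i]}$, whereas you verify the claimed formula by identifying the $k$-th block of $(\bX\otimes\bm I_r)\by$ and $(\bv\bv^\top)^{\ptop}\by$; the content is identical.
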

\begin{proof}
    We will solve the variational description \eqref{eq:var-sym-proj} with $V = V_{\sym}$.
    Since the optimization is unconstrained, we may compute directly the first-order condition for the optimizer
    \begin{equation}
        0 = \frac{\partial \mathsf{obj}}{\partial \bS}(\bS^\star; \by) = \bS^\star - \sqrt{\frac{r}{N}}\sum_{j = 1}^N\frac{\bv_j\by_j^\top + \by_j\bv_j^\top}{2},
    \end{equation}
    thus the optimizer is
    \begin{equation}
        \bS^\star = \bS^\star(\by) = \sqrt{\frac{r}{N}}\sum_{j = 1}^N\frac{\bv_j\by_j^\top + \by_j\bv_j^\top}{2}.
    \end{equation}
    Then, the blocks of the projection of $\by$ may be recovered as
    \begin{align}
      (\bP_{V_\sym}\by)_{[i]}
      &= (\sV(\bS^\star))_{[i]} \nonumber \\
      &= \sqrt{\frac{r}{N}}\bS^\star\bv_i \nonumber \\
      &= \frac{r}{2N}\sum_{j = 1}^N\left(\la \by_j, \bv_i \ra\bv_j + \la \bv_i, \bv_j \ra \by_j\right) \nonumber \\
      &= \sum_{j = 1}^N\frac{r}{2N}\left(\la \bv_i, \bv_j \ra \bm I_r + \bv_j\bv_i^\top\right) \by_j.
    \end{align}
    In particular, the matrices in each term of the sum give the blocks $(\bP_{V_\sym})_{[ij]}$, whereby the formula in the statement is clearly correct for each block.
\end{proof}

\subsection{Projector to $V_{\sym}^\prime$}

The case of $V = V_{\sym}^\prime$, being a constrained optimization, requires more intermediate calculations in order to determine the Lagrange multipliers.
An important role will be played by the Hadamard square of the Gram matrix, $\bX^{\odot 2}$, which is equivalently the Gram matrix of the matrices $\bv_i\bv_i^\top$ (and which figured in the proof of the Gerzon bound, Proposition~\ref{prop:etf-gerzon-bound}, and the description of perturbations of matrices in the elliptope, Proposition~\ref{prop:li-tam}).
To perform our calculations in closed form, we will need to compute the inverse of this matrix explicitly.
We will first give a general result in terms of this inverse and then show how the inverse may be computed for the case of ETFs needed in the main text.

\begin{proposition}
    \label{prop:v-sym-prime}
    Suppose that the matrices $\bv_i\bv_i^\top$ are linearly independent, or equivalently that the matrix $\bX^{\odot 2}$ is non-singular.
    Then, the blocks of $\bP_{V_{\sym}^\prime}$ are given by
    \begin{equation}
        (\bP_{V_{\sym}^\prime})_{[ij]} = \frac{r}{N}\left(\frac{1}{2}\la \bv_i, \bv_j \ra \bm I_r + \frac{1}{2}\bv_j \bv_i^\top - \sum_{k = 1}^N \sum_{\ell = 1}^N ((\bX^{\odot 2})^{-1})_{k\ell} \la \bv_i, \bv_k \ra \la \bv_j, \bv_\ell \ra \bv_k\bv_\ell^\top\right).
    \end{equation}
\end{proposition}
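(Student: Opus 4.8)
The plan is to solve the constrained least-squares problem \eqref{eq:var-sym-proj} with $V = \{\bS \in \RR^{r \times r}_{\sym} : \la \bS, \bv_i\bv_i^\top \ra = 0 \text{ for } i \in [N]\}$, whose minimizer $\bS^\star(\by)$ determines $\bP_{V_{\sym}^\prime}\by = \sV(\bS^\star(\by))$: indeed $\sV$ is an isometric embedding by Proposition~\ref{prop:lin-isom} and $\|\sV(\bS) - \by\|_2^2 = 2\,\mathsf{obj}(\bS; \by)$, so orthogonal projection onto $\sV(V)$ is the image under $\sV$ of the minimizer of $\mathsf{obj}(\,\cdot\,;\by)$ over $V$. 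Writing $\bQ \colonequals \sqrt{\frac{r}{N}}\sum_{i = 1}^N \frac{\bv_i\by_i^\top + \by_i\bv_i^\top}{2}$ for the symmetric ``data'' matrix appearing in \eqref{eq:var-sym-proj-obj}, I would introduce Lagrange multipliers $\lambda_1, \dots, \lambda_N \in \RR$ for the $N$ linear constraints and form the Lagrangian $L(\bS; \bm\lambda) = \frac{1}{2}\Tr(\bS^2) - \la \bS, \bQ\ra - \sum_{i = 1}^N \lambda_i \la \bS, \bv_i\bv_i^\top\ra$. Since $\mathsf{obj}(\,\cdot\,;\by)$ is a strictly convex quadratic restricted to the linear subspace $V$, any feasible stationary point is the unique global minimizer, so it suffices to exhibit one.

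Differentiating $L$ in $\bS$ gives the stationarity condition $\bS^\star = \bQ + \sum_{i = 1}^N \lambda_i \bv_i\bv_i^\top$. Imposing the constraints $\la \bS^\star, \bv_j\bv_j^\top\ra = 0$ and using $\la \bv_i\bv_i^\top, \bv_j\bv_j^\top\ra = \la \bv_i, \bv_j\ra^2 = (\bX^{\odot 2})_{ij}$ reduces this to the linear system $\bX^{\odot 2}\bm\lambda = -\bm q$, where $q_j \colonequals \la \bQ, \bv_j\bv_j^\top\ra = \sqrt{\frac{r}{N}}\sum_{i = 1}^N \la \bv_i, \bv_j\ra\la \by_i, \bv_j\ra$. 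By the hypothesis that $\bX^{\odot 2}$ is nonsingular, $\bm\lambda = -(\bX^{\odot 2})^{-1}\bm q$, that is, $\lambda_k = -\sqrt{\frac{r}{N}}\sum_{\ell, m} ((\bX^{\odot 2})^{-1})_{k\ell}\,\la \bv_m, \bv_\ell\ra\,\la \by_m, \bv_\ell\ra$. (The invertibility is exactly what lets the Lagrange multipliers be pinned down in closed form, which is why the statement assumes it.)

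Finally I would read off the blocks of $\bP_{V_{\sym}^\prime} = \sV \circ \bS^\star$. The $i$-th block of $\bP_{V_{\sym}^\prime}\by = \sV(\bS^\star)$ equals $\sqrt{\frac{r}{N}}\bS^\star\bv_i = \sqrt{\frac{r}{N}}\bQ\bv_i + \sqrt{\frac{r}{N}}\sum_{k} \lambda_k \la \bv_k, \bv_i\ra \bv_k$. Expanding $\sqrt{\frac{r}{N}}\bQ\bv_i$ exactly as in the computation of $\bP_{V_\sym}$ gives $\sum_{j = 1}^N \frac{r}{2N}\big(\la \bv_i, \bv_j\ra \bm I_r + \bv_j\bv_i^\top\big)\by_j$, which produces the first two terms of the claimed formula; substituting the expression for $\lambda_k$ into the second sum and rewriting $\la \by_j, \bv_\ell\ra\bv_k = \bv_k\bv_\ell^\top\by_j$, then collecting the operator applied to each $\by_j$, produces $-\frac{r}{N}\sum_{k,\ell}((\bX^{\odot 2})^{-1})_{k\ell}\,\la \bv_i, \bv_k\ra\,\la \bv_j, \bv_\ell\ra\,\bv_k\bv_\ell^\top$. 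Matching the coefficient of $\by_j$ then yields the stated formula for $(\bP_{V_{\sym}^\prime})_{[ij]}$. The only genuinely delicate step is this last index bookkeeping (keeping the dummy indices $k,\ell,m$ and the $\sqrt{r/N}$ factors straight); everything upstream is the routine Lagrange-multiplier solution of a convex quadratic program, made explicit because the UNTF identity $\sum_i \bv_i\bv_i^\top = \frac{N}{r}\bm I_r$ collapses the quadratic term of \eqref{eq:var-sym-proj-obj} to $\frac{1}{2}\Tr(\bS^2)$.
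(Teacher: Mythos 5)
Your proof is correct and follows essentially the same route as the paper: Lagrange multipliers for the $N$ linear constraints $\la \bS, \bv_i\bv_i^\top\ra = 0$ in the constrained least-squares problem \eqref{eq:var-sym-proj}, stationarity giving $\bS^\star = \bQ + \sum_i\lambda_i\bv_i\bv_i^\top$, the linear system $\bX^{\odot 2}\bm\lambda = -\bm q$ for the multipliers, and then substitution and index bookkeeping to read off the blocks. The one extra remark you include---that strict convexity of the restricted objective guarantees uniqueness of the feasible stationary point---is a small but welcome clarification that the paper leaves implicit.
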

\begin{proof}
    We must now solve \eqref{eq:var-sym-proj} with $T = V_{\sym}^\prime = \mathsf{span}(\{\bv_i\bv_i^\top: i \in [N]\})^\perp$.
    We introduce the Lagrangian
    \begin{equation}
        L(\bS, \bm \gamma; \by) \colonequals \mathsf{obj}(\bS; \by) - \left\la \bS, \sum_{i = 1}^N \gamma_i \bv_i\bv_i^\top \right\ra
    \end{equation}
    and write the first-order condition $\frac{\partial L}{\partial \bS}(\bS^\star, \bm\gamma; \by) = 0$, which gives
    \begin{equation}
        \bS^\star = \bS^\star(\by) = \sqrt{\frac{r}{N}}\sum_{j = 1}^N\frac{\bv_j\by_j^\top + \by_j\bv_j^\top}{2} + \sum_{j = 1}^N \gamma_j \bv_j\bv_j^\top.
        \label{eq:v-sym-prime-1}
    \end{equation}
    The other first-order condition $\frac{\partial L}{\partial \bm\gamma}(\bS^\star, \bm\gamma; \by) = 0$ is equivalent to the constraints, $\la \bS^\star, \bv_i\bv_i^\top \ra = 0$ for all $i \in [N]$, which yields the system of linear equations for $\bm\gamma$,
    \begin{equation}
        \sum_{j = 1}^N (\bX^{\odot 2})_{ij} \gamma_j = -\sqrt{\frac{r}{N}}\sum_{j = 1}^N\la \bv_i, \bv_j \ra \la \bv_i, \by_j \ra \text{ for } i \in [N].
    \end{equation}
    Since $\bX^{\odot 2}$ is invertible by assumption, this admits a unique solution which is given by
    \begin{equation}
        \gamma_j = -\sqrt{\frac{r}{N}}\sum_{k = 1}^N \sum_{\ell = 1}^N ((\bX^{\odot 2})^{-1})_{jk}\la \bv_k, \bv_\ell \ra \la \bv_k, \by_\ell \ra.
    \end{equation}
    Substituting into \eqref{eq:v-sym-prime-1}, we find
    \begin{equation}
        \bS^\star = \sqrt{\frac{r}{N}}\left(\sum_{j = 1}^N\frac{\bv_j\by_j^\top + \by_j\bv_j^\top}{2} - \sum_{j = 1}^N \sum_{k = 1}^N \sum_{\ell = 1}^N ((\bX^{\odot 2})^{-1})_{jk}\la \bv_k, \bv_\ell \ra \la \bv_k, \by_\ell \ra \bv_j\bv_j^\top\right).
    \end{equation}
    As before, we recover the blocks of the projection of $\by$,
    \begin{align}
      (\bP_{V_\sym^\prime}\by)_{[i]}
      &= (\sV(\bS^\star))_{[i]} \nonumber \\
      &= \sqrt{\frac{r}{N}}\bS^\star\bv_i \nonumber \\
      &= \frac{r}{N}\left(\sum_{j = 1}^N\frac{\la \bv_i, \by_j \ra\bv_j + \la \bv_i, \bv_j \ra\by_j}{2} - \sum_{j = 1}^N \sum_{k = 1}^N \sum_{\ell = 1}^N ((\bX^{\odot 2})^{-1})_{jk}\la \bv_i, \bv_j \ra \la \bv_k, \bv_\ell \ra \la \bv_k, \by_\ell \ra \bv_j\right) \nonumber \\
      &= \sum_{j = 1}^N\frac{r}{N}\left(\frac{1}{2}\la \bv_i, \bv_j \ra \bm I_r + \frac{1}{2}\bv_j \bv_i^\top - \sum_{k = 1}^N \sum_{\ell = 1}^N ((\bX^{\odot 2})^{-1})_{k\ell} \la \bv_i, \bv_k \ra \la \bv_j, \bv_\ell \ra \bv_k\bv_\ell^\top\right) \by_j,
    \end{align}
    and the result follows.
\end{proof}

\begin{corollary}
    \label{cor:v-sym-prime-proj-etf}
    Suppose that $\bv_1, \dots, \bv_N$ form an ETF with $r > 1$.
    Then, the blocks of $\bP_{V_{\sym}^\prime}$ are given by
    \begin{equation}
        (\bP_{V_{\sym}^\prime})_{[ij]} = \frac{N - r}{N(r - 1)}\bv_i\bv_j^\top + \frac{r}{2N}\bv_j\bv_i^\top + \frac{r}{2N}\la \bv_i, \bv_j \ra \bm I_r - \frac{r^2(N - 1)}{N^2(r - 1)}\sum_{k = 1}^N \la \bv_i, \bv_k \ra \la \bv_j, \bv_k \ra \bv_k\bv_k^\top.
    \end{equation}
\end{corollary}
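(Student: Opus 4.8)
The plan is to invoke Proposition~\ref{prop:v-sym-prime} and carry out the single explicit computation it leaves open, namely the inversion of $\bX^{\odot 2}$. For an ETF of $N$ vectors in $\RR^r$ with $r > 1$, the Welch bound (Proposition~\ref{prop:etf-welch-bound}) forces the common squared coherence to be $\alpha^2 = \frac{N - r}{r(N - 1)}$, so that $\bX^{\odot 2} = (1 - \alpha^2)\bm I_N + \alpha^2 \one\one^\top$; this matrix is nonsingular, exactly as observed in the proof of the Gerzon bound (Proposition~\ref{prop:etf-gerzon-bound}), so the hypothesis of Proposition~\ref{prop:v-sym-prime} is satisfied. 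By the Sherman--Morrison formula,
\begin{equation}
  (\bX^{\odot 2})^{-1} = \frac{1}{1 - \alpha^2}\bm I_N - \frac{\alpha^2}{(1 - \alpha^2)\big(1 + \alpha^2(N - 1)\big)}\one\one^\top,
\end{equation}
and the point is that both $1 - \alpha^2 = \frac{N(r - 1)}{r(N - 1)}$ and $1 + \alpha^2(N - 1) = \frac{N}{r}$ simplify, so that
\begin{equation}
  \big((\bX^{\odot 2})^{-1}\big)_{k\ell} = \frac{r(N - 1)}{N(r - 1)}\,\delta_{k\ell} - \frac{r(N - r)}{N^2(r - 1)}.
\end{equation}

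Next I would substitute this into the block formula of Proposition~\ref{prop:v-sym-prime}. The triple sum $\sum_{k, \ell}\big((\bX^{\odot 2})^{-1}\big)_{k\ell}\la \bv_i, \bv_k \ra\la \bv_j, \bv_\ell \ra\,\bv_k\bv_\ell^\top$ breaks into two pieces: the $\delta_{k\ell}$ term contributes a multiple of the diagonal single sum $\sum_k \la \bv_i, \bv_k \ra\la \bv_j, \bv_k \ra\,\bv_k\bv_k^\top$ (which is the one that survives in the final answer), while the constant term contributes a multiple of $\big(\sum_k \la \bv_i, \bv_k \ra\bv_k\big)\big(\sum_\ell \la \bv_j, \bv_\ell \ra\bv_\ell\big)^\top$. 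Here I would use the UNTF identity $\sum_k \la \bv_i, \bv_k \ra\bv_k = \bV\bV^\top\bv_i = \frac{N}{r}\bv_i$ to collapse the second piece to a multiple of $\bv_i\bv_j^\top$, and then combine it with the $\frac{1}{2}\la \bv_i, \bv_j \ra\bm I_r + \frac{1}{2}\bv_j\bv_i^\top$ term already present. Collecting the four resulting terms and tracking the overall factor $\frac{r}{N}$ from Proposition~\ref{prop:v-sym-prime} yields the stated expression for $(\bP_{V_{\sym}^\prime})_{[ij]}$.

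There is no conceptual obstacle here; the only real work is bookkeeping of the constant factors, and the step most prone to error is the simplification of $1 - \alpha^2$ and $1 + \alpha^2(N - 1)$ in terms of $N$ and $r$ --- it is precisely this simplification that makes all of the denominators $N(r - 1)$ and $N^2(r - 1)$ in the final formula appear.
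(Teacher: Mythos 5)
Your proposal is correct and follows essentially the same route as the paper: invoke Proposition~\ref{prop:v-sym-prime}, invert $\bX^{\odot 2}$ via Sherman--Morrison with the simplifications $1-\alpha^2 = \frac{N(r-1)}{r(N-1)}$ and $1+\alpha^2(N-1) = \frac{N}{r}$, split the double sum into a $\delta_{k\ell}$ piece and a constant piece, and collapse the latter with $\bV\bV^\top\bv_i = \frac{N}{r}\bv_i$. The paper presents the inverse entries as separate scalars $a$ (diagonal) and $b$ (off-diagonal) and writes the split as $(a-b)$ times the diagonal sum plus $b$ times the full double sum, which is notation-equivalent to your $\delta_{k\ell}$-plus-constant decomposition.
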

\begin{proof}
    By Proposition~\ref{prop:etf-gerzon-bound}, the conditions of Proposition~\ref{prop:v-sym-prime} are satisfied, so it suffices to compute $(\bX^{\odot 2})^{-1}$.
    The off-diagonal entries of $\bX$ all equal the coherence $\alpha$, which by Proposition~\ref{prop:etf-welch-bound} is given by
    \begin{equation}
        \alpha = \sqrt{\frac{N - r}{r(N - 1)}}.
    \end{equation}
    Thus, we have
    \begin{equation}
        \bX^{\odot 2} = (1 - \alpha^2) \bm I_N + \alpha^2 \one\one^\top = \frac{N(r - 1)}{r(N - 1)}\bm I_N + \frac{N - r}{r(N - 1)}\one\one^\top.
    \end{equation}
    This matrix may be inverted by the Sherman-Morrison formula, giving
    \begin{equation}
        (\bX^{\odot 2})^{-1} = \frac{r(N - 1)}{N(r - 1)}\bm I_N - \frac{r(N - r)}{N^2(r - 1)}\one\one^\top.
    \end{equation}
    Thus, the entries are
    \begin{equation}
        (\bX^{\odot 2})^{-1}_{ij} = \left\{\begin{array}{lcr} a \colonequals \frac{r((N - 1)^2 + r - 1)}{N^2(r - 1)} & : & i = j, \\ b \colonequals - \frac{r(N - r)}{N^2(r - 1)} & : & i \neq j.\end{array}\right.
    \end{equation}
    Substituting into the expression from Proposition~\ref{prop:v-sym-prime}, we find
    \begin{align}
      \sum_{k = 1}^N \sum_{\ell = 1}^N &((\bX^{\odot 2})^{-1})_{k\ell} \la \bv_i, \bv_k \ra \la \bv_j, \bv_\ell \ra \bv_k\bv_\ell^\top \nonumber \\
      &= (a - b)\sum_{k = 1}^N \la \bv_i, \bv_k \ra \la \bv_j, \bv_k \ra \bv_k\bv_k^\top + b\sum_{k = 1}^N\sum_{\ell = 1}^N \la \bv_i, \bv_k \ra \la \bv_j, \bv_\ell \ra \bv_k\bv_\ell^\top \nonumber \\
      &= (a - b)\sum_{k = 1}^N \la \bv_i, \bv_k \ra \la \bv_j, \bv_k \ra \bv_k\bv_k^\top + b(\bV\bV^\top \bv_i)(\bV\bV^\top \bv_j)^\top \nonumber \\
      &= \frac{r(N - 1)}{N(r - 1)}\sum_{k = 1}^N \la \bv_i, \bv_k \ra \la \bv_j, \bv_k \ra \bv_k\bv_k^\top - \frac{N - r}{r(r - 1)}\bv_i\bv_j^\top.
    \end{align}
    Combining with the full result of Proposition~\ref{prop:v-sym-prime} then gives the claim.
\end{proof}

\end{document}